\documentclass[12pt]{amsart}
\usepackage{float} 
\usepackage{amsmath,amsthm,amssymb,enumitem,verbatim,stmaryrd,xcolor,microtype,graphicx,aliascnt,needspace,array,mathrsfs}
\usepackage[T1]{fontenc}
\usepackage[utf8]{inputenc}
\usepackage[english]{babel} 
\usepackage[top=2.95cm,bottom=2.95cm,left=3.5cm,right=3.5cm]{geometry}
\usepackage[bookmarksdepth=2,linktoc=page,colorlinks,linkcolor={red!70!black},citecolor={red!80!black},urlcolor={blue!80!black}]{hyperref}

\usepackage{tikz}
\usepackage{tikz-cd}\usetikzlibrary{arrows}

\usepackage[mathcal]{euscript}                               

\usepackage{mathptmx}                                        
\usepackage[colorinlistoftodos,bordercolor=orange,backgroundcolor=orange!20,linecolor=orange,textsize=footnotesize]{todonotes}\setlength{\marginparwidth}{2,5cm} \makeatletter \providecommand \@dotsep{5} \def\listtodoname{List of Todos} \def\listoftodos{\@starttoc{tdo}\listtodoname} \makeatother 

\allowdisplaybreaks 

\usepackage{etoolbox}
\makeatletter
\patchcmd{\@startsection}{\@afterindenttrue}{\@afterindentfalse}{}{}             
\patchcmd{\part}{\bfseries}{\bfseries\LARGE}{}{}
\patchcmd{\section}{\scshape}{\bfseries}{}{}\renewcommand{\@secnumfont}{\bfseries} 
\patchcmd{\@settitle}{\uppercasenonmath\@title}{\large}{}{}
\patchcmd{\@setauthors}{\MakeUppercase}{}{}{}
\addto{\captionsenglish}{} 
\addto{\captionsenglish}{} 
\addto{\captionsenglish}{} 
\makeatother

\usepackage{fancyhdr}

\pagestyle{fancy}
\fancyhead{}
\fancyfoot{}
\fancyhead[OR,EL]{\footnotesize \thepage}
\setlength{\headheight}{12pt}

\addto\extrasenglish{}  
\theoremstyle{plain}
\newtheorem{thm}{Theorem}[section] 
\newaliascnt{lemma}{thm}\newtheorem{lemma}[lemma]{Lemma}\aliascntresetthe{lemma}
\newaliascnt{cor}{thm}\newtheorem{cor}[cor]{Corollary}\aliascntresetthe{cor}
\newaliascnt{prop}{thm}\newtheorem{prop}[prop]{Proposition}\aliascntresetthe{prop}
\newtheorem{thmA}{Theorem} \newaliascnt{corA}{thmA}\newtheorem{corA}[corA]{Corollary}\aliascntresetthe{corA}
\newaliascnt{lemmaA}{thmA}\aliascntresetthe{lemmaA}
\newtheorem{claim}{Claim}[thm] 
\newtheorem*{thm*}{Theorem}
\newtheorem*{lem*}{Lemma}
\newtheorem*{cor*}{Corollary}

\theoremstyle{definition}
\newaliascnt{df}{thm}\newtheorem{df}[df]{Definition}\aliascntresetthe{df}
\newaliascnt{rem}{thm}\newtheorem{rem}[rem]{Remark}\aliascntresetthe{rem}
\newaliascnt{ex}{thm}\newtheorem{ex}[ex]{Example}\aliascntresetthe{ex}

\newtheorem*{df*}{Definition}
\newtheorem*{ex*}{Example}
\newtheorem*{rem*}{Remark}

\theoremstyle{remark}

\setcounter{tocdepth}{1}   
\DeclareRobustCommand{\gobblefour}[5]{}    

\DeclareFontFamily{OT1}{pzc}{}                                
\DeclareFontShape{OT1}{pzc}{m}{it}{<-> s * [1.10] pzcmi7t}{}
\DeclareMathAlphabet{\mathpzc}{OT1}{pzc}{m}{it}
\DeclareSymbolFont{sfoperators}{OT1}{bch}{m}{n} \DeclareSymbolFontAlphabet{\mathsf}{sfoperators} \makeatletter\def\operator@font{\mathgroup\symsfoperators}\makeatother 
\DeclareSymbolFont{cmletters}{OML}{cmm}{m}{it}              
\DeclareSymbolFont{cmsymbols}{OMS}{cmsy}{m}{n}
\DeclareSymbolFont{cmlargesymbols}{OMX}{cmex}{m}{n}
\DeclareMathSymbol{\myjmath}{\mathord}{cmletters}{"7C}     \let\jmath\myjmath 
\DeclareMathSymbol{\myamalg}{\mathbin}{cmsymbols}{"71}     
\DeclareMathSymbol{\mycoprod}{\mathop}{cmlargesymbols}{"60}
\DeclareMathSymbol{\myalpha}{\mathord}{cmletters}{"0B}     \let\alpha\myalpha 
\DeclareMathSymbol{\mybeta}{\mathord}{cmletters}{"0C}      \let\beta\mybeta
\DeclareMathSymbol{\mygamma}{\mathord}{cmletters}{"0D}     \let\gamma\mygamma
\DeclareMathSymbol{\mydelta}{\mathord}{cmletters}{"0E}     \let\delta\mydelta
\DeclareMathSymbol{\myepsilon}{\mathord}{cmletters}{"0F}   \let\epsilon\myepsilon
\DeclareMathSymbol{\myzeta}{\mathord}{cmletters}{"10}      \let\zeta\myzeta
\DeclareMathSymbol{\myeta}{\mathord}{cmletters}{"11}       \let\eta\myeta
\DeclareMathSymbol{\mytheta}{\mathord}{cmletters}{"12}     \let\theta\mytheta
\DeclareMathSymbol{\myiota}{\mathord}{cmletters}{"13}      \let\iota\myiota
\DeclareMathSymbol{\mykappa}{\mathord}{cmletters}{"14}     \let\kappa\mykappa
\DeclareMathSymbol{\mylambda}{\mathord}{cmletters}{"15}    \let\lambda\mylambda
\DeclareMathSymbol{\mymu}{\mathord}{cmletters}{"16}        \let\mu\mymu
\DeclareMathSymbol{\mynu}{\mathord}{cmletters}{"17}        \let\nu\mynu
\DeclareMathSymbol{\myxi}{\mathord}{cmletters}{"18}        \let\xi\myxi
\DeclareMathSymbol{\mypi}{\mathord}{cmletters}{"19}        \let\pi\mypi
\DeclareMathSymbol{\myrho}{\mathord}{cmletters}{"1A}       \let\rho\myrho
\DeclareMathSymbol{\mysigma}{\mathord}{cmletters}{"1B}     \let\sigma\mysigma
\DeclareMathSymbol{\mytau}{\mathord}{cmletters}{"1C}       \let\tau\mytau
\DeclareMathSymbol{\myupsilon}{\mathord}{cmletters}{"1D}   \let\upsilon\myupsilon
\DeclareMathSymbol{\myphi}{\mathord}{cmletters}{"1E}       \let\phi\myphi
\DeclareMathSymbol{\mychi}{\mathord}{cmletters}{"1F}       \let\chi\mychi
\DeclareMathSymbol{\mypsi}{\mathord}{cmletters}{"20}       \let\psi\mypsi
\DeclareMathSymbol{\myomega}{\mathord}{cmletters}{"21}     \let\omega\myomega
\DeclareMathSymbol{\myvarepsilon}{\mathord}{cmletters}{"22}\let\varepsilon\myvarepsilon
\DeclareMathSymbol{\myvartheta}{\mathord}{cmletters}{"23}  \let\vartheta\myvartheta
\DeclareMathSymbol{\myvarpi}{\mathord}{cmletters}{"24}     \let\varpi\myvarpi
\DeclareMathSymbol{\myvarrho}{\mathord}{cmletters}{"25}    \let\varrho\myvarrho
\DeclareMathSymbol{\myvarsigma}{\mathord}{cmletters}{"26}  \let\varsigma\myvarsigma
\DeclareMathSymbol{\myvarphi}{\mathord}{cmletters}{"27}    \let\varphi\myvarphi

\DeclareMathOperator{\Hom}{Hom}

\DeclareMathOperator{\cl}{cl}
\DeclareMathOperator{\si}{si}
\newcommand{\del}{\setminus}  

\DeclareMathOperator{\Sym}{Sym}

\newcommand\F{{\mathbb F}}

\newcommand\K{{\mathbb K}}

\newcommand\N{{\mathbb N}}

\renewcommand\P{{\mathbb P}}

\newcommand\R{{\mathbb R}}
\renewcommand\S{{\mathbb S}}
\newcommand\T{{\mathbb T}}

\newcommand\Z{{\mathbb Z}}

\newcommand\cC{{\mathcal C}}

\newcommand\cF{{\mathcal F}}

\newcommand\cH{{\mathcal H}}

\newcommand\cX{{\mathcal X}}

\newcommand\Funpm{{\F_1^\pm}}
\newcommand\id{\textup{id}}

\renewcommand\geq{\geqslant}
\renewcommand\leq{\leqslant}

\newcommand{\gen}[1]{\langle #1 \rangle}

\newcommand{\past}[2]{#1\!\sslash\!#2}

\newcommand{\cross}[5]{\mathchoice{\scalebox{1.3}{$\big[$}\,\raisebox{1pt}{$\begin{matrix}{\scalebox{0.9}{$#1$}}\hspace{-5pt}&{\scalebox{0.9}{$#2$}}\\[-2pt]{\scalebox{0.9}{$#3$}}\hspace{-5pt}&{{\scalebox{0.9}{$#4$}}}\end{matrix}$}\,\scalebox{1.3}{$\big]$}_{#5}}{\big[\begin{smallmatrix}{#1}&{#2}\\{#3}&{#4}\end{smallmatrix}\big]_{#5}}{}{}}   
\renewcommand{\setminus}{\backslash}
\newcommand{\minor}[2]{\setminus #1 / #2}

\renewcommand\emptyset\varnothing


\title{The foundation of generalized parallel connections, 2-sums, and segment-cosegment exchanges of matroids}

\author{Matthew Baker}
\address{\rm Matthew Baker, School of Mathematics, Georgia Institute of Technology, Atlanta, USA}
\email{mbaker@math.gatech.edu}

\author{Oliver Lorscheid}
\address{\rm Oliver Lorscheid, University of Groningen, the Netherlands}
\email{o.lorscheid@rug.nl}

\author{Zach Walsh}
\address{\rm Zach Walsh, Department of Mathematics and Statistics, Auburn University, Auburn, USA}
\email{zwalsh@auburn.edu}

\author{Tianyi Zhang}
\address{\rm Tianyi Zhang, School of Mathematics, Georgia Institute of Technology, Atlanta, USA}
\email{kafuka@gatech.edu}

\fancyhead[OC]{\footnotesize Foundation of generalized parallel connections}

\hypersetup{pdftitle={Foundation of generalized parallel connections},pdfauthor={Matthew Baker, Oliver Lorscheid, Zach Walsh, and Tianyi Zhang}}

\begin{document}

\begin{abstract}
We show that, under suitable hypotheses, the foundation of a generalized parallel connection of matroids is the relative tensor product of the foundations. Using this result, we show that the foundation of a 2-sum of matroids is the absolute tensor product of the foundations, and that the foundation of a matroid is invariant under segment-cosegment exchange.
\end{abstract}

\thanks{We thank Nathan Bowler and Rudi Pendavingh for numerous helpful suggestions.
M.B. was supported by NSF grant DMS-2154224 and a Simons Fellowship in Mathematics. O.L. was supported by Marie Sk{\l}odowska Curie Fellowship MSCA-IF-101022339.}

\maketitle

\section{Introduction}

{\em Pastures} are algebraic objects that generalize partial fields. In \cite{Baker-Lorscheid25}, Baker and Lorscheid study the \emph{foundation} of a matroid $M$, which is a pasture canonically attached to $M$ that governs the representability of $M$ over arbitrary pastures.
In particular, the foundation $F_M$ determines the set of projective equivalence classes of representations of $M$ over partial fields.
More precisely, for any pasture $P$, the set of (weak) $P$-representations of $M$, modulo rescaling equivalence, is canonically identified with the set of pasture homomorphisms from $F_M$ to $P$.

\medskip

Let $M_1,M_2$ be matroids
with ground sets $E_1$ and $E_2$ respectively. 
If $E_1 \cap E_2 = T$ with $M_1|T = M_2|T$ and $T$ is a modular flat\footnote{A flat $T$ of a matroid $M$ is called {\em modular} if $r(T) + r(F) = r(T \cap F) + r(T \cup F)$ for every flat $F$ of $M$, where $r$ is the rank function of $M$.} in either $M_1$ or $M_2$, then one can define the {\em generalized parallel connection} $P_T(M_1,M_2)$ (cf. \cite[p.441]{Oxley06}) as the matroid on $E = E_1 \cup E_2$ such that $F$ is a flat of $P_T(M_1,M_2)$ if and only if $F\cap E_i$ is a flat of $M_i$ for $i=1,2$.

There are some important constructions in matroid theory which make use of the generalized parallel connection, two of the most important being:

\begin{enumerate}
\item If $M_1$ and $M_2$ are simple and $T = \{ p \}$ is a singleton, then $T$ is automatically a modular flat in both $M_1$ and $M_2$. In this case, we define the {\em 2-sum of $M_1$ and $M_2$ along $p$}, denoted $M_1 \oplus_2 M_2$ (or $M_1 \oplus_p M_2$, if we want to emphasize the dependence on $p$), to be the minor $P_T(M_1,M_2) \setminus T$ of $P_T(M_1,M_2)$.
\item If $T$ is a coindependent triangle (i.e., 3-element circuit) in a matroid $M$, we define the {\em Delta-Wye exchange of $M$ along $T$}, denoted $\Delta_T(M)$, to be the minor $P_T(M,M(K_4)) \setminus T$ of $P_T(M,M(K_4))$, where $T$ is identified with a triangle in $M(K_4)$. 

More generally, if $M$ is a matroid and $X \subseteq E(M)$ is a coindependent set such that $M|X \cong U_{2,n}$ for some $n \ge 2$, one defines the {\em segment-cosegment exchange of $M$ along $X$} to be $P_X(M,\Theta_n) \setminus X$, where $\Theta_n$ is a certain matroid on $2n$ elements defined in \autoref{sec:segmentcosegmentexchange}. When $n = 3$, we have $\Theta_3 \cong M(K_4)$ and the segment-cosegment exchange of $M$ along $X$ coincides with $\Delta_X(M)$.
\end{enumerate}
It is known that a 2-sum of matroids $M_1$ and $M_2$ is representable over a partial field $P$ if and only if $M_1$ and $M_2$ are both representable over $P$ \cite[Corollary 2.4.31]{vanZwam09}. It is also known that if $M$ is a matroid containing a 
coindependent set $X$ such that $M|X \cong U_{2,n}$ for some $n \ge 2$, then $M$ is representable over a partial field $P$ if and only if the segment-cosegment exchange of $M$ along $X$ is representable over $P$ \cite[Corollary 3.6]{Oxley-Semple-Vertigan}. 
In this paper, we generalize these results in two important ways:
\begin{itemize}
    \item We establish bijections between suitable rescaling classes of $F$-representations.
    \item We prove analogous results for representations over arbitrary pastures.
\end{itemize}

\medskip

Our main theorems are as follows:

\begin{thmA}\label{thmA}
Let $M_1$ and $M_2$ be matroids so that $E(M_1) \cap E(M_2) = T$ and $M_1|T = M_2|T$. Suppose that either:
\begin{enumerate}
    \item $T$ is a modular flat of both $M_1$ and $M_2$; or
    \item $T$ is isomorphic to $U_{2,n}$ for some $n \geq 2$ and $M_2$ is isomorphic to $\Theta_n$.
\end{enumerate}
Then the foundation of $P_T(M_1,M_2)$ is isomorphic to $F_{M_1}\otimes_{F_{M_1|T}}F_{M_2}$.
\end{thmA}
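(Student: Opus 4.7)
The plan is to apply Yoneda's lemma to the functor of points of foundations. By the defining property of the foundation, for any pasture $P$, pasture homomorphisms $F(M)\to P$ are in natural bijection with rescaling classes of weak $P$-representations of $M$; by the universal property of the relative tensor product of pastures, homomorphisms $F(M_1)\otimes_{F(T)}F(M_2)\to P$ correspond to pairs of homomorphisms $f_i\colon F(M_i)\to P$ that agree after restriction to $F(T)$. Thus the theorem reduces to showing that rescaling classes of weak $P$-representations of $P_T(M_1,M_2)$ are in natural bijection with pairs of rescaling classes of weak $P$-representations of $M_1$ and $M_2$ whose restrictions to $T$ coincide.

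First I would make precise the $F(T)$-algebra structures on $F(M_1)$, $F(M_2)$, and $F(P_T(M_1,M_2))$ coming from the restriction maps: since $M_i|T=T$ and $P_T(M_1,M_2)|E_i=M_i$, taking foundations gives canonical pasture morphisms $F(T)\to F(M_i)\to F(P_T(M_1,M_2))$ which assemble into a canonical homomorphism
\[
\Phi\ \colon\ F(M_1)\otimes_{F(T)}F(M_2)\ \longrightarrow\ F(P_T(M_1,M_2)).
\]
Checking that $\Phi$ is an isomorphism then amounts, via Yoneda, to the matroidal \emph{gluing statement}: every weak $P$-representation of $P_T(M_1,M_2)$ is uniquely determined (up to rescaling) by its restrictions to $M_1$ and $M_2$, and every pair of weak $P$-representations of $M_1,M_2$ that agree on $T$ glues to such a representation.

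For case (1), the gluing statement extends the classical theorem of Brylawski on representations of generalized parallel connections along a modular flat, together with its partial-field refinement by van Zwam, to the setting of arbitrary pastures. Concretely, I would analyze the Grassmann--Pl\"ucker relations for $P_T(M_1,M_2)$ and show that the Pl\"ucker coordinate of any basis $B$ of $P_T(M_1,M_2)$ factors, thanks to the modularity of $T$, as a product of Pl\"ucker coordinates of $M_1$ and $M_2$ divided by one of $T$; consequently every Pl\"ucker, hexagon, and cross-ratio relation defining $F(P_T(M_1,M_2))$ reduces to a relation already present in $F(M_1)\otimes_{F(T)}F(M_2)$. For case (2), I would proceed analogously, but replacing the modularity input by the explicit combinatorial structure of $\Theta_n$ developed in Section~\ref{sec:segmentcosegmentexchange}: the matroid $\Theta_n$ is designed precisely so that a representation of $P_X(M,\Theta_n)$ is nothing more than a representation of $M$ together with a compatible representation of $U_{2,n}=M|X$.

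The principal obstacle will be executing the gluing construction intrinsically at the level of pastures: over a field one can pick bases and write down matrices, but over a pasture one must work directly with the cross-ratio presentation of the foundation. The substantive content of the theorem is to verify that compatibility on $T$ suffices to determine all cross-ratios of $P_T(M_1,M_2)$ --- in particular the ``mixed'' ones coming from bases that straddle $E_1\setminus T$ and $E_2\setminus T$ --- consistently with every defining relation of the foundation. The modularity hypothesis in case (1), and the particular structure of $\Theta_n$ in case (2), are exactly what allow these mixed cross-ratios to be expressed in terms of data already pulled back from $F(T)$.
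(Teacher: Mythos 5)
Your high-level framework coincides exactly with the paper's: both reduce, via representability of $\cX^R_M$ by $F(M)$ and the universal property of the relative tensor product, to the ``gluing'' statement that, naturally in $P$, rescaling classes of $P$-representations of $P_T(M_1,M_2)$ correspond to pairs of rescaling classes for $M_1$ and $M_2$ agreeing on $T$. Where you diverge is in the proposed cryptomorphic language for executing this. You plan to work with Grassmann--Pl\"ucker coordinates and cross-ratios and to prove a Pl\"ucker factorization for bases of the parallel connection; the paper deliberately avoids this and instead encodes representations as \emph{modular systems of hyperplane functions} (Definition~\ref{df:P-hyperplanes}), precisely because $P_T(M_1,M_2)$ is defined by its flats. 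This makes the decomposition trivial on the combinatorial side: Propositions~\ref{prop:hyperplanes of the parallel connection} and~\ref{prop: corank-2 flats of the parallel connection} (and, for case~(2), Propositions~\ref{Prop:hyperplanes of the parallel connection (r(T) = 2)} and~\ref{Prop:corank-2 flats of the parallel connection (r(T) = 2)} together with Propositions~\ref{prop: theta hyperplanes} and~\ref{prop: theta corank-2 flats}) give a clean classification of hyperplanes and corank-2 flats of $P_T(M_1,M_2)$ in terms of those of $M_1$, $M_2$, and $T$, and the proof of Theorem~\ref{thm: foundation of the generalized parallel connection} (resp.\ Theorem~\ref{thm:parallelconnectionwithTheta}) then proceeds by restricting/gluing hyperplane functions and checking the modular-triple linear dependences case by case. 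Your GP-coordinate route would have to contend with the fact that bases of the generalized parallel connection do not split as cleanly across $E_1$ and $E_2$ as flats do (a basis $B$ may have $B\cap E_i$ independent but not spanning, $B\cap T$ arbitrary), so the factorization formula you state needs to be formulated and proved with real care over a pasture, where one cannot row-reduce. That is exactly the difficulty you flag as ``the principal obstacle,'' and your sketch stops short of resolving it; the casework on modular triples that you defer is precisely where the substance of the paper's proof lies. In short: same strategic reduction, opposite choice of cryptomorphism from the paper's, and the hard verification is left open.
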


Part (1) of \autoref{thmA} is proved in \autoref{sec:GPC}, and part (2) is proved in \autoref{sec:segmentcosegmentexchange}.

In the special case where $T = \varnothing$, we obtain the following corollary (also proved in \cite{Baker-Lorscheid-Zhang25}):

\begin{corA}\label{corA}
The foundation of a direct sum $M_1 \oplus M_2$ is isomorphic to $F_{M_1}\otimes F_{M_2}$.
\end{corA}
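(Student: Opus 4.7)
The plan is to deduce \autoref{corA} directly from part (1) of \autoref{thmA}, applied with $T = \varnothing$. Three ingredients must be verified. First, $\varnothing$ is a modular flat of both $M_1$ and $M_2$ whenever it is actually a flat (i.e., whenever neither matroid has a loop): the modularity identity $r(T) + r(F) = r(T \cap F) + r(T \cup F)$ then reduces to the trivial equality $r(F) = r(F)$. Second, $P_\varnothing(M_1, M_2)$ coincides with the direct sum $M_1 \oplus M_2$: by the flat-wise characterization of the generalized parallel connection, a subset $F \subseteq E_1 \cup E_2$ is a flat of $P_\varnothing(M_1,M_2)$ if and only if $F \cap E_i$ is a flat of $M_i$ for each $i$, which is precisely the characterization of flats in the direct sum. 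Third, the foundation of the empty matroid is the initial object in the category of pastures, so the relative tensor product $F(M_1)\otimes_{F(\varnothing)} F(M_2)$ collapses to the absolute tensor product $F(M_1)\otimes F(M_2)$.

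With these three ingredients, the isomorphism supplied by \autoref{thmA}(1) reads $F(M_1\oplus M_2)\cong F(M_1)\otimes F(M_2)$, which is exactly the statement of \autoref{corA}. The main obstacle I anticipate is the treatment of loops: if $M_1$ or $M_2$ contains a loop $\ell$, then $\ell$ lies in the closure of $\varnothing$, so $\varnothing$ fails to be a flat and \autoref{thmA}(1) does not apply verbatim. I would resolve this by appealing to the invariance of the foundation under the addition or deletion of loops (loops contribute no rescaling data and impose no new cross-ratio relations, so the underlying pasture is unchanged), thereby reducing to the loop-free situation where the argument above applies, and then reintroducing the loops on both sides of the resulting isomorphism.
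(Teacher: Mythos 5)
Your proposal is correct and follows exactly the route the paper intends: the paper introduces \autoref{corA} with the phrase ``In the special case where $T = \varnothing$'' immediately after \autoref{thmA}, leaving the three verifications you spell out implicit. You correctly check that $\varnothing$ is trivially modular, that $P_\varnothing(M_1,M_2) = M_1 \oplus M_2$ via the flat characterization of both constructions, and that $F(\varnothing) \cong \Funpm$ is initial so the relative tensor product degenerates to the absolute one. Your attention to the loop caveat is appropriate --- if either $M_i$ has a loop then $\varnothing$ is not a flat, so \autoref{thmA}(1) does not apply verbatim --- and the resolution you sketch (invariance of the foundation under loop deletion) is exactly the reduction the paper relies on elsewhere when citing \cite[Corollary 4.9]{Baker-Lorscheid20} to pass to simple matroids. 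In short: same approach, with the edge case handled slightly more explicitly than in the paper.
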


\begin{rem*}
When $T$ is a modular flat in $M_2$ but not necessarily in $M_1$, the generalized parallel connection $M = P_T(M_1, M_2)$ is still well-defined, but the identity $F_{P_T(M_1, M_2)} \cong F_{M_1} \otimes_{F_{M_1|T}} F_{M_2}$ does not necessarily hold, even when $r(T) = 2$. We give an example at the end of \autoref{sec:GPC}.

\end{rem*}

In certain situations, the foundations of $P_T(M_1,M_2)$ and $P_T(M_1,M_2) \setminus T$ turn out to be isomorphic. 
The two most important examples are that of 2-sums and segment-cosegment exchanges:


\begin{thmA}\label{thmB}
Let $M_1$ and $M_2$ be matroids on $E_1$ and $E_2$, respectively, so that $E_1 \cap E_2 = \{p\}$ and $p$ is not a loop or a coloop in $M_1$ or $M_2$.
Then the foundation of the 2-sum $M_1 \oplus_p M_2$ is isomorphic to $F_{M_1}\otimes F_{M_2}$.
\end{thmA}

\begin{thmA}\label{thmC}
Let $M$ be a matroid and let $X \subseteq E(M)$ be a coindependent set such that $M|X \cong U_{2,n}$ for some $n \ge 2$.
Then the foundation of the segment-cosegment exchange of $M$ along $X$ is isomorphic to $F_M$.
\end{thmA}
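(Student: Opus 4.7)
The plan is to deduce the theorem from \autoref{thmA}(2) together with two targeted computations concerning the matroid $\Theta_n$. Applying \autoref{thmA}(2) with $M_1 := M$, $M_2 := \Theta_n$, and $T := X$ yields
\[
F\bigl(P_X(M,\Theta_n)\bigr) \;\cong\; F(M) \otimes_{F(U_{2,n})} F(\Theta_n).
\]
To reach the desired conclusion, I would establish two natural isomorphisms:
\begin{enumerate}
\item[(a)] The restriction $\Theta_n|X \cong U_{2,n}$ induces an isomorphism $F(U_{2,n}) \xrightarrow{\sim} F(\Theta_n)$.
\item[(b)] Deletion of $X$ induces an isomorphism $F\bigl(P_X(M,\Theta_n) \setminus X\bigr) \xrightarrow{\sim} F\bigl(P_X(M,\Theta_n)\bigr)$.
\end{enumerate}
Granting (a) and (b), the chain of identifications
\[
F\bigl(P_X(M,\Theta_n)\setminus X\bigr) \;\cong\; F\bigl(P_X(M,\Theta_n)\bigr) \;\cong\; F(M) \otimes_{F(U_{2,n})} F(U_{2,n}) \;\cong\; F(M)
\]
yields the theorem.

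I would prove (a) and (b) through the universal property of the foundation: it suffices to check that, for every pasture $P$, the corresponding pullback map on rescaling classes of $P$-representations is a bijection. For (a), the claim is that every $P$-representation of the segment $X \subset \Theta_n$ extends uniquely, up to rescaling, to a $P$-representation of $\Theta_n$, because the prescribed circuits of $\Theta_n$ present each cosegment element as a forced $P$-linear combination of the segment elements. For (b), the analogous claim is that inside $P_X(M,\Theta_n)$ the segment $X$ is rigidly determined, up to rescaling, by the surviving cosegment elements on the $\Theta_n$-side; concretely, the three-term circuits of $\Theta_n$ mixing segment and cosegment survive in the ambient matroid and pin each $x_i \in X$ down. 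The resulting extension is automatically compatible with the $M$-side, since the $U_{2,n}$-structure on $X$ is preserved and so the cross-ratios on $X$ read off from $\Theta_n$ coincide with those read off from $M$.

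The main obstacle is executing these two extension-and-uniqueness arguments at the level of pasture representations rather than merely over fields, working directly with the combinatorial circuit description of $\Theta_n$ and the hexagon/cross-ratio relations of the foundation. Once the relevant bijections on representation sets are in place, their functoriality in $P$ is automatic. As a sanity check, when $n = 3$ we have $\Theta_3 \cong M(K_4)$, which is regular, so $F(\Theta_3) \cong \Funpm \cong F(U_{2,3})$, which is consistent with (a); this case recovers the classical Delta--Wye statement of Oxley--Semple--Vertigan on partial-field representability.
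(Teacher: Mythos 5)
Your overall strategy — reduce via (a) an identification $F(U_{2,n}) \cong F(\Theta_n)$ and (b) an identification $F(P_X(M,\Theta_n)\setminus X) \cong F(P_X(M,\Theta_n))$, then chain them through \autoref{thmA}(2) — is sound in outline, and step (a) is indeed correct: it is exactly the corollary the paper records after \autoref{thm:parallelconnectionwithTheta} (take $M_1 = U_{2,n}$, so that $P_X(M_1,\Theta_n) = \Theta_n$). However, your stated justification for (a) is off: in $\Theta_n$ the set $Y$ is a basis, so the cosegment elements $y_i$ are \emph{not} linear combinations of the segment elements — they are independent of $X$. The extension from $U_{2,n}$-representations to $\Theta_n$-representations has to be built hyperplane by hyperplane (this is the content of the long case analysis in \autoref{thm:parallelconnectionwithTheta}), not read off from circuits that do not exist.

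The real gap is in step (b), and it is serious. First, your supporting assertion is factually wrong: for $n > 3$ the matroid $\Theta_n$ has no three-element circuits meeting both $X$ and $Y$; the minimal circuits through $x_i$ involving $Y$ have the form $(Y - y_i) \cup \{x_i\}$ and have $n$ elements. Second, and more fundamentally, the claim that one can directly extend a modular system of hyperplane functions from $M' = P_X(M,\Theta_n)\setminus X$ back to $P_X(M,\Theta_n)$ by ``reading off'' the values at $X$ — and that the result is ``automatically compatible with the $M$-side'' — is precisely the hard consistency problem, which you assert rather than prove. Different modular triples that would determine $f_H(x_i)$ a priori give potentially conflicting values, and there is no reason on the surface for the reconstruction from the $\Theta_n$-side to agree with the constraints coming from the $M$-side. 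The paper explicitly warns that this deletion step fails in general (see the remark following the statement of \autoref{thmC}: $F(P_T(M_1,M_2)\setminus T)$ is generally \emph{not} $F(P_T(M_1,M_2))$, even when $T$ is modular in both factors), so one cannot appeal to a generic rigidity principle; the co-independence of $X$ and the special structure of $\Theta_n$ must enter in an essential way, and your sketch does not locate where.

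The paper's route to (b) is genuinely different. It never constructs the extension. Instead it proves, in \autoref{lem:cross ratio surjection}, that the natural map $F(M')^\times \to F(P_X(M,\Theta_n))^\times$ is surjective (a computation with universal cross ratios, case by case over corank-$2$ flats). It then exploits duality: by \cite[Proposition 11.5.11]{Oxley06}, $M^*$ is the segment-cosegment exchange of $(M')^*$, which lets one string together a cycle of natural maps
\[
F(M) \xrightarrow{\ \cong\ } F(M^*) \longrightarrow F(P') \xrightarrow{\ \cong\ } F((M')^*) \xrightarrow{\ \cong\ } F(M') \longrightarrow F(P_X(M,\Theta_n)) \xrightarrow{\ \cong\ } F(M),
\]
whose composition is surjective on unit groups. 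Since foundations of matroids are finitely generated pastures, \autoref{lem:pasture surjection} (a surjective endomorphism of a finitely generated pasture is an isomorphism) then forces every link in the chain, and in particular $F(M') \to F(P_X(M,\Theta_n))$, to be an isomorphism. This circumvents the consistency issue entirely. If you want to salvage your proposal, you should replace your ad hoc extension argument for (b) with the cross-ratio surjectivity plus finitely-generated-pasture argument, or else supply a genuine proof that the extension you describe is well defined and compatible with the $M$-side.
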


A proof of \autoref{thmB} is given in \autoref{sec:foundation2sum}. \autoref{thmB} implies, in particular,
that (under the hypotheses of \autoref{thmB}) for every partial field $P$ there is a bijection between rescaling equivalence classes of $P$-representations of $M_1 \oplus_p M_2$ and pairs of rescaling equivalence classes of $P$-representations of $M_1$ and $M_2$.
To the best of our knowledge, even this particular consequence of \autoref{thmB} is new.

\autoref{thmC} is proved in \autoref{sec:segmentcosegmentexchange}. It generalizes a result of Oxley--Semple--Vertigan \cite[Corollary 3.6]{Oxley-Semple-Vertigan} which says that, under the hypotheses of \autoref{thmC}, for every partial field $P$ there is a bijection between rescaling equivalence classes of $P$-representations of $M$ and rescaling equivalence classes of $P$-representations of the segment-cosegment exchange of $M$ along $X$.

The proof of \autoref{thmB} relies on part (1) of \autoref{thmA}, and the proof of \autoref{thmC} relies on part (2) of \autoref{thmA}.

\begin{rem*}
The foundation of $M' = P_T(M_1,M_2) \setminus T$ is not in general isomorphic to the foundation of $M = P_T(M_1,M_2)$, even when $E(M_1)$ and $E(M_2)$ are both modular in $M$. For example, if $N$ is any non-regular matroid on $E$ and $M_i = N\oplus e_i$ with $e_i \not\in E$ for $i = 1,2$, then $E(N)$ is a modular flat of both $M_1$ and $M_2$, so by \autoref{thmA} we have $F_M \cong F_{M_1}\otimes_{F_N} F_{M_2}$.
However, $F_{M'} = F_{e_1 \oplus e_2} \cong \F_1^\pm$, whereas $F_{M_1}\otimes_{F_N} F_{M_2} \cong F_N \not\cong \F_1^\pm$.   
\end{rem*}

Since the universal partial field of a matroid can be computed from its foundation (cf.~\cite[Lemma 7.48]{Baker-Lorscheid21b} and \autoref{sec:Pvz} below), 
\autoref{thmC} implies in particular an affirmative solution to Conjecture 3.4.4 in Stefan van Zwam's thesis \cite{vanZwam09} (see \autoref{sec:Pvz} for a proof):


\begin{corA}\label{corC}
Let $M$ be a matroid and let $X \subseteq E(M)$ be a coindependent set such that $M|X \cong U_{2,n}$ for some $n \ge 2$, and assume that $M$ is representable over some partial field.
Then the universal partial field of the segment-cosegment exchange of $M$ along $X$ is isomorphic to the universal partial field of $M$.
\end{corA}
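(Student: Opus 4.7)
The approach is to deduce the corollary directly from \autoref{thmC}, combined with the functorial description of the universal partial field in terms of the foundation that is reviewed in \autoref{sec:Pvz} (cf.\ \cite[Lemma~7.48]{Baker-Lorscheid21b}).

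Set $M' := P_X(M,\Theta_n) \setminus X$ for the segment-cosegment exchange of $M$ along $X$. First I would verify that $M'$ is also representable over a partial field, so that the universal partial field $\mathbb{P}_{M'}$ is defined. By \autoref{thmC} there is an isomorphism of pastures $F(M') \cong F(M)$, and the hypothesis that $M$ is representable over some partial field $P$ provides a pasture homomorphism $F(M) \to P$; composing with the inverse of the foundation isomorphism yields a pasture homomorphism $F(M') \to P$, and hence an identification of $M'$ as representable over $P$.

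Next I would invoke the content of \autoref{sec:Pvz}: for any matroid $N$ representable over some partial field, there is a canonical pasture-theoretic construction producing $\mathbb{P}_N$ from $F(N)$ (roughly, as the initial partial field admitting a homomorphism from $F(N)$), and this construction is functorial with respect to pasture isomorphisms. Applied to the isomorphism $F(M') \cong F(M)$ from \autoref{thmC}, this immediately yields the desired isomorphism $\mathbb{P}_{M'} \cong \mathbb{P}_M$.

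The main obstacle is therefore not in the deduction itself but in setting up the functorial assignment $F(N) \mapsto \mathbb{P}_N$ cleanly in \autoref{sec:Pvz}. I would take care to state the relevant lemma in that section so that it manifestly respects pasture isomorphisms on the foundation side; once this is in place, the corollary follows in a single line from \autoref{thmC}, with no need to revisit concrete representations or the combinatorics of the segment-cosegment exchange.
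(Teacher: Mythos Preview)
Your proposal is correct and follows essentially the same approach as the paper: apply \autoref{thmC} to obtain $F(M')\cong F(M)$, then use the functor $\Pi$ from \autoref{sec:Pvz} (which sends the foundation of a representable matroid to its universal partial field and preserves isomorphisms) to conclude $\mathbb{P}_{M'}\cong\mathbb{P}_M$. The paper's argument is slightly terser, leaving the verification that $M'$ is representable implicit, but the logic is identical.
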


\autoref{thmC} also has the following consequence for excluded minors (which is proved in \cite[Theorem 1.1]{Oxley-Semple-Vertigan} in the special case where $P$ is a partial field); for a proof, see Corollary~\ref{cor: foundations for segment-cosegment exchange}.

\begin{corA} \label{corD}
Let $P$ be a pasture, and let $M$ be an excluded minor for representability over $P$. Then every segment-cosegment exchange of $M$ is also an excluded minor for representability over $P$.
\end{corA}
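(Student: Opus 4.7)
The plan is to deduce Corollary D from \autoref{thmC} via the standard bijection (valid for any pasture $P$) between rescaling classes of weak $P$-representations of a matroid $N$ and pasture homomorphisms $F(N) \to P$. Throughout, write $M' := P_X(M, \Theta_n) \setminus X$ for the segment-cosegment exchange of $M$ along $X$, and $X^* := E(\Theta_n) \setminus X$ for the new cosegment inside $M'$.

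The first step is immediate from \autoref{thmC}: since $F(M') \cong F(M)$, a pasture homomorphism $F(M') \to P$ exists if and only if one $F(M) \to P$ does. Because $M$ is not $P$-representable, $M'$ is not $P$-representable either.

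The second step is to show that every proper minor $N$ of $M'$ is $P$-representable. The key matroid-theoretic input---implicit in the work of Oxley, Semple, and Vertigan \cite{Oxley-Semple-Vertigan}---is the following dichotomy: every proper minor of $M'$ is isomorphic either (a) to a proper minor $N_0$ of $M$, or (b) to a segment-cosegment exchange $P_{X_0}(N_0, \Theta_{n_0}) \setminus X_0$ of some proper minor $N_0$ of $M$ along a segment $X_0$ with $N_0 | X_0 \cong U_{2, n_0}$ and $X_0$ co-independent in $N_0$. In case (a), $N$ is $P$-representable by the excluded-minor hypothesis on $M$. In case (b), \autoref{thmC} applied to $(N_0, X_0)$ yields $F(N) \cong F(N_0)$, and since $N_0$ is a proper minor of $M$, it is $P$-representable; therefore $N$ is as well.

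The main obstacle is to verify the minor dichotomy cleanly. This proceeds by a case analysis on the position of deleted and contracted elements relative to $X^*$: for $e \in E(M) \setminus X$, one checks that $M' \setminus e \cong P_X(M \setminus e, \Theta_n) \setminus X$ and $M' / e \cong P_X(M / e, \Theta_n) \setminus X$ whenever the co-independence of $X$ is preserved, while deletions and contractions of elements of $X^*$ translate, through the paired segment-cosegment structure of $\Theta_n$, into exchanges of appropriate minors of $M$ along the smaller segment $X \setminus \{x\}$. Boundary cases---when the segment shrinks below size two, or ceases to be co-independent---must be handled individually, and in each of them $N$ reduces directly to case (a) of the dichotomy.
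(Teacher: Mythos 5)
Your overall strategy—deduce non-representability of $M'$ from \autoref{thmC}, then argue that every proper minor of $M'$ is $P$-representable by reducing to proper minors of $M$—is the right shape, and the easy cases you describe (deleting $e \in E(M) \setminus X$, and contracting $e$ not spanned by $X$) are correct. But the claimed minor dichotomy is false, and the gap occurs precisely in one of the ``boundary cases'' you wave at.

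Specifically, suppose $e \in E(M) \setminus X$ is spanned by $X$ in $M$ (so $e \in \cl_M(X) \setminus X$; this can happen since $M$ is simple, and then $M|(X \cup e) \cong U_{2,n+1}$). In $M/e$ the set $X$ collapses to rank one, so there is no segment left to exchange; and $M'/e$ is \emph{not} isomorphic to a minor of $M$ in general. What is actually true, by \cite[Lemma 2.15]{Oxley-Semple-Vertigan}, is that $M'/e$ is isomorphic to a \emph{2-sum} of $M/e \setminus (X - x_i)$ (a proper minor of $M$, hence $P$-representable) with a copy of $U_{n+1}^{n-1}$, whose $P$-representability follows from that of $U_{2,n+1} = M|(X \cup e)$ via duality. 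To conclude that this 2-sum is $P$-representable over an arbitrary pasture one needs \autoref{thmB}, not \autoref{thmC}. So your claim that all boundary cases ``reduce directly to case (a)'' fails here, and the argument genuinely requires the 2-sum result in addition to the segment-cosegment result.

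A secondary, milder issue: for $M' \setminus y_i$ the paper does not exhibit this minor as falling into either (a) or (b). Instead it observes that $Y - y_i$ lies in a series class of $M' \setminus y_i$, invokes \cite[Corollary 4.9]{Baker-Lorscheid20} to pass to the cosimplification without changing the foundation, and then uses that the cosimplification is a minor of $M'/y_j$---which was already shown to be $P$-representable in a previous case. This is an appeal to minor-closure of $P$-representability for minors of $M'$ (not of $M$), which is a cleaner route than trying to realize $M'\setminus y_i$ itself as a minor or exchange of a proper minor of $M$. If you want to salvage your dichotomy, you would need to replace ``isomorphic to'' with ``has the same foundation as'' and still handle the 2-sum case separately.
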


By applying Theorems~\ref{thmB} and \ref{thmC} to $\Hom(F_M,P)$ for certain pastures $P$, we obtain some interesting consequences for $P$-representability. These consequences are already known when $P$ is a partial field, but when $P=\S$ (the sign hyperfield) or $\T$ (the tropical hyperfield), we obtain what appear to be new results. In order to state these corollaries precisely, we recall the following definitions: 

\begin{df*}
(1) A matroid $M$ is called {\em orientable} if $\Hom(F_M,\S)$ is non-empty. (This is equivalent to the usual notion of orientability, cf.~\cite[Example 3.33]{Baker-Bowler19}.)

(2) A matroid $M$ is called {\em rigid} if $\Hom(F_M,\T)$ has more than one element. (This is equivalent to the condition that the base polytope of $M$ has no non-trivial regular matroid polytope subdivision, cf.~\cite[Proposition B.1]{Baker-Lorscheid24}.) Equivalently, $M$ is rigid if and only if every homomorphism $F_M \to \T$ factors through the canonical inclusion $\K \to \T$, where $\K$ is the Krasner hyperfield.
\end{df*}

We have the following straightforward corollaries of Theorems~\ref{thmB} and \ref{thmC}, respectively.

\begin{corA}\label{corF}
Let $M_1$ and $M_2$ be matroids on $E_1$ and $E_2$, respectively, so that $E_1 \cap E_2 = \{p\}$ and $p$ is not a loop or a coloop of $M_1$ or $M_2$.
Then the 2-sum $M_1 \oplus_p M_2$ is orientable (resp. rigid) if and only if $M_1$ and $M_2$ are both orientable (resp. rigid). 
\end{corA}

\begin{proof}
Let $N=M_1\oplus_pM_2$ and $F_{M_1}$, $F_{M_2}$ and $F_N$ be the foundations of $M_1$, $M_2$ and $N$, respectively. Then $M_1$ and $M_2$ are both orientable if and only if both $\Hom(F_{M_1},\S)$ and $\Hom(F_{M_2},\S)$ are non-empty. 
By the universal property of the tensor product in the category of pastures \cite[Lemma 2.7]{Baker-Lorscheid25},
there is a canonical bijection
\[
 \Hom(F_{M_1},\S) \times\Hom(F_{M_2},\S) \ = \ \Hom(F_{M_1}\otimes F_{M_2},\S).
\]
Moreover, by \autoref{thmB} we have $F_N \cong F_{M_1}\otimes F_{M_2}$.
Thus $M_1$ and $M_2$ are both orientable if and only if 
\[
\Hom(F_{M_1},\S) \times\Hom(F_{M_2},\S) \ = \ \Hom(F_{M_1}\otimes F_{M_2},\S) \ = \ \Hom(F_N,\S)
\]
is non-empty. This is, in turn, equivalent to $N=M_1\oplus_pM_2$ being orientable. 

The claim for rigid matroids follows from the same proof, replacing ``orientable'' by ``rigid'', non-empty by singleton, and $\S$ by $\T$ throughout.
\end{proof}

\begin{corA}
Let $M$ be a matroid and let $X \subseteq E(M)$ be a coindependent set such that $M|X \cong U_{2,n}$ for some $n \ge 2$.
Then the segment-cosegment exchange of $M$ along $X$ is orientable (resp. rigid) if and only if $M$ is orientable (resp. rigid).
\end{corA}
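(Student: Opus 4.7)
The plan is to mirror the proof of \autoref{corF} verbatim, substituting \autoref{thmC} for \autoref{thmB}. Since \autoref{thmC} gives an \emph{isomorphism} (not just a tensor-product identification) between the foundation of $M$ and the foundation of its segment-cosegment exchange, the argument is in fact one step shorter: no universal property of the tensor product is needed.

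Concretely, let $M'$ denote the segment-cosegment exchange of $M$ along $X$, and let $F_M, F_{M'}$ be the respective foundations. First I would invoke \autoref{thmC} to obtain a canonical isomorphism $F_{M'} \cong F_M$ of pastures. Next, for any pasture $P$, this induces a bijection
\[
\Hom(F_{M'},P) \ \longleftrightarrow \ \Hom(F_M,P).
\]
Specializing $P = \S$ (the sign hyperfield), one side is non-empty iff the other is, which by the definition recalled just before \autoref{corF} is precisely the statement that $M'$ is orientable iff $M$ is orientable. Specializing $P = \T$ (the tropical hyperfield), one side has more than one element iff the other does, giving the analogous statement for rigidity.

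This is essentially a formal consequence of \autoref{thmC}, so there is no real obstacle beyond noting that orientability and rigidity are properties defined purely in terms of $\Hom(F_M,-)$ for particular target pastures, and that pasture isomorphisms of course preserve the cardinality of Hom-sets into any fixed target. The hard part of the argument has already been carried out in proving \autoref{thmC} itself (in Section~\ref{sec:segmentcosegmentexchange}).
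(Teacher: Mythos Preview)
Your proposal is correct and takes essentially the same approach as the paper: invoke \autoref{thmC} to get $F_{M'}\cong F_M$, then observe that orientability and rigidity are defined purely in terms of $\Hom(F_M,-)$ into a fixed target pasture and hence depend only on the isomorphism class of the foundation. The paper's proof is a two-sentence version of exactly this.
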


\begin{proof}
 By \autoref{thmC}, the foundation of the segment-cosegment exchange of $M$ along $X$ is isomorphic to the foundation of $M$. Since the notions of orientability and rigidity for a matroid $M$ depend only on the foundation of $M$, the claim follows.
\end{proof}

\section{Background on foundations and representations of matroids over pastures}

In this section, we recall some background material from \cite{Baker-Lorscheid25} which will be used throughout this paper. We also discuss some preliminary facts about generalized parallel connections which we will need.

\subsection{Pastures}
Pastures are a generalization of the notion of field in which we still have a multiplicative abelian group $G$, an absorbing element $0$, and an ``additive structure'', but we relax the requirement that the additive structure come from a binary operation.

By a \emph{pointed monoid} we mean a multiplicatively written commutative monoid $P$ with an element $0$ that satisfies $0\cdot a=0$ for all $a\in P$. We denote the unit of $P$ by $1$ and write $P^\times$ for the group of invertible elements in $P$. 
We denote by $\Sym_3(P)$ all elements of the form $a+b+c$ in the monoid semiring $\N[P]$, where $a,b,c\in P$.

\begin{df}
 A \emph{pasture} is a pointed monoid $P$, 
 together with a subset $N_P$ of $\Sym_3(P)$, such that $a \in P^\times$ for all nonzero $a \in P$ 
 and for all $a,b,c,d\in P$ we have:
 \begin{enumerate}[label={(P\arabic*)}]
  \item\label{P1} $a+0+0\in N_P$ if and only if $a=0$,
  \item\label{P2} if $a+b+c\in N_P$, then $ad+bd+cd$ is in $N_P$,
  \item\label{P3} there is a unique element $\epsilon\in P^\times$ such that $1+\epsilon+0\in N_P$.
 \end{enumerate}
\end{df}
We call $N_P$ the \emph{nullset of $P$}, and say that \emph{$a+b+c$ is null}, and write symbolically $a+b+c=0$, if $a+b+c\in N_P$. 
The element $\epsilon$ plays the role of an additive inverse of $1$, and the relations $a+b+c=0$ express that certain sums of elements are zero, even though the multiplicative monoid $P$ does not carry an addition. For this reason, we will write frequently $-a$ for $\epsilon a$ and $a-b$ for $a+\epsilon b$. In particular, we have $\epsilon=-1$. 

A {\em morphism} of pastures is a multiplicative map
$f : P \to P'$ of monoids such that $f(0)=0$, $f(1)=1$ and $f(a)+f(b)+f(c)=0$ in $P'$ whenever $a+b+c=0$ in $P$.

\subsubsection{Examples}
Every field $F$ can be considered as a pasture whose underlying monoid equals that of $F$ and whose nullset is $N_F=\{a+b+c\mid a+b+c=0\text{ in }F\}$. 

Other examples of interest are the following:
\begin{enumerate}
 \item The \emph{regular partial field} is the pointed monoid $\Funpm=\{0,1,-1\}$ (with the obvious multiplication) together with the nullset $N_{\Funpm}=\{0,\ 1-1\}$.
 \item The \emph{Krasner hyperfield} is the pointed monoid $\K=\{0,1\}$ (with the obvious multiplication) together with the nullset $N_\K=\{0,\ 1+1,\ 1+1+1\}$.
 \item The \emph{sign hyperfield} is the pointed monoid $\S=\{0,1,-1\}$ (with the obvious multiplication) together with the nullset $N_\S=\{0,\ 1-1,\ 1+1-1,\ 1-1-1\}$.
 \item The \emph{tropical hyperfield} is the pointed monoid $\T=\R_{\geq0}$ (with the obvious multiplication) together with the nullset $N_\T=\{a+b+b\mid a\leq b\}$.
\end{enumerate}

\subsubsection{Tensor products}
The category of pastures contains all limits and colimits. For example, $\Funpm$ is initial and $\K$ is terminal, i.e., for every pasture $P$, there are unique morphisms $\Funpm\to P$ and $P\to\K$.

The categorical construction that is most essential to this paper is the tensor product (or push-out). Namely given pasture morphisms $\alpha_1:P_0\to P_1$ and $\alpha_2:P_0\to P_2$, there is a pasture $P_1\otimes_{P_0}P_2$ together with morphisms $\iota_1:P_1\to P_1\otimes_{P_0}P_2$ and $\iota_2:P_2\to P_1\otimes_{P_0}P_2$ such that $\iota_1\circ\alpha_1=\iota_2\circ\alpha_2$ that is universal in the sense that for every pair of pasture morphisms $f_1:P_1\to Q$ and $f_2:P_2\to Q$ with $f_1\circ\alpha_1=f_2\circ\alpha_2$, there is a unique pasture morphism $f:P_1\otimes_{P_0}P_2\to Q$ such that $f_1=f\circ\iota_1$ and $f_2=f\circ\iota_2$. In other words, there is a canonical bijection
\[
 \Hom(P_1\otimes_{P_0}P_2,\ Q) \ \longrightarrow \ \Hom(P_1,\ Q) \ \times_{\Hom(P_0,Q)} \ \Hom(P_2,\ Q)
\]
that is functorial in $Q$. This property determines $P_1\otimes_{P_0}P_2$, together with $\iota_1$ and $\iota_2$, uniquely up to unique isomorphism. For the construction of $P_1\otimes_{P_0}P_2$, we refer the reader to \cite{Creech21}.

\subsection{Representations of matroids over pastures}

Let $P$ be a pasture and let $M$ be a matroid on the finite set $E$. 
There are various ``cryptomorphic'' descriptions of weak $P$-matroids, for example in terms of ``weak $P$-circuits'', cf. \cite{Baker-Bowler19}. 
For the purposes of the present paper, however, it will be more convenient to define weak $P$-matroids in terms of modular systems of hyperplane functions, as in \cite[Section 2.3]{Baker-Lorscheid25}.
The point here is that generalized parallel connections are defined in terms of flats, so we have easier access to the hyperplanes of a generalized parallel connection than to the bases or circuits.

\begin{df} \label{df:P-hyperplanes}
Let $\mathscr H$ be the set of hyperplanes of $M$.
\begin{enumerate}
\item Given $H \in \mathscr H$, we say that $f_H : E \to P$ is a {\em $P$-hyperplane function} for $H$ if $f_H(e)=0$ if and only if $e \in H$.
\item A triple of hyperplanes $(H_1,H_2,H_3) \in \mathscr H^3$ is \emph{modular} if $F=H_1\cap H_2\cap H_3$ is a flat of corank $2$ such that $F=H_i\cap H_j$ for all distinct $i,j\in\{1,2,3\}$.
\item A {\em modular system} of $P$-hyperplane functions for $M$ is a collection of $P$-hyperplane functions $f_H : E \to P$, one for each $H \in \mathscr H$, such that
whenever $H_1,H_2,H_3$ is a modular triple of hyperplanes in $\mathscr H$, the corresponding functions $f_{H_i}$ are linearly dependent, i.e., there exist constants $c_1,c_2,c_3$ in $P$, not all zero, such that 
\[
c_1 f_{H_1}(e) + c_2 f_{H_2}(e) + c_3 f_{H_3}(e) = 0
\]
for all $e \in E$.
\end{enumerate}
\end{df}

\begin{df} \label{df:representation}
\begin{enumerate}
    \item A \emph{$P$-representation} of $M$ is a modular system of $P$-hyperplane functions for $M$.
    \item Two $P$-representations $\{ f_H \}$ and $\{ f'_H \}$ of $M$ are \emph{isomorphic} if there is a function $H \mapsto c_H$ from $\mathscr H$ to $P^\times$ such that $f'_H(e)=c_H f_H(e)$ for all $e \in E$ and $H \in \mathscr H$.
    \item Two $P$-representations $\{ f_H \}$ and $\{ f'_H \}$ of $M$ are \emph{rescaling equivalent}
    if there are functions $H \mapsto c_H$ from $\mathscr H$ to $P^\times$ and $e \mapsto c_e$ from $E$ to $P^\times$ such that $f'_H(e)=c_H c_e f_H(e)$ for all $e \in E$ and $H \in \mathscr H$.
\end{enumerate}
\end{df}

When $P$ is a partial field, a rescaling equivalence class of $P$-representations of $M$ is the same thing as a projective equivalence class of $P$-representations of $M$ in the sense of \cite{Pendavingh-vanZwam10b}.
When $P$ is a field, the equivalence between the notion of representability provided in \autoref{df:representation} and the usual notion of matroid representability over a field is precisely the content of ``Tutte's representation theorem'', cf.~\cite[Theorem 5.1]{Tutte65}. 

\begin{rem}\label{rem: rescaling equivalence for hyperplane functions and Grassmann-Pluecker functions}
 The notion of rescaling classes of $P$-representations given by \autoref{df:representation} is compatible with the notion of rescaling classes of $P$-representations given in \cite[Section 1.4.7]{Baker-Lorscheid21b}. Indeed, by \cite[Thm.\ 2.16]{Baker-Lorscheid25}, for every modular system $\{f_H\}$ of hyperplane functions for $M$ in $P$, there is a weak Grassmann-Pl\"ucker function $\Delta:E^r\to P$ representing $M$ such that
 \[
  \frac{f_H(e)}{f_H(e')} \ = \ \frac{\Delta(e,e_2,\dotsc,e_r)}{\Delta(e',e_2,\dotsc,e_r)}
 \]
 for every $H\in\mathscr H$ and all $e,e',e_2,\dotsc,e_r\in E$ such that $\{e_2,\dotsc,e_r\}$ spans $H$ and $\{e',e_2,\dotsc,e_r\}$ is a basis of $M$. The weak Grassmann-Pl\"ucker function $\Delta$ is uniquely determined up to a constant $c\in P^\times$, and two modular systems of hyperplane functions $\{f_H\}$ and $\{f'_{H}\}$ correspond to the same weak Grassmann-Pl\"ucker function $\Delta:E^r\to P$ (up to a constant) if and only if they are isomorphic.
 
 Two weak Grassmann-Pl\"ucker functions $\Delta$ and $\Delta'$ are rescaling equivalent if there are a constant $c\in P^\times$ and a function $e\mapsto c_e$ from $E\to P^\times$ such that
 \[
  \Delta'(e_1,\dotsc,e_r) \ = \ c\cdot c_{e_1} \dotsb c_{e_r} \cdot \Delta(e_1,\dotsc,e_r).
 \]
 Consequently, we have
 \[
  \frac{\Delta'(e,e_2,\dotsc,e_r)}{\Delta'(e',e_2,\dotsc,e_r)} \ = \ \frac{c_e\cdot\Delta(e,e_2,\dotsc,e_r)}{c_{e'}\cdot\Delta(e',e_2,\dotsc,e_r)} \ = \ \frac{c_e\cdot f_H(e)}{c_{e'}\cdot f_H(e')}, 
 \]
 where $H\in\mathscr H$ and $e,e',e_2,\dotsc,e_r\in E$ are as before. This establishes a bijection
 \[
  \bigg\{\begin{array}{c} \text{rescaling classes of weak Grassmann-} \\ \text{Pl\"ucker functions for $M$ in $P$}\end{array}\bigg\} \ 
  \overset{\raise-.5ex\hbox{$\sim$}}{\longrightarrow}
  \ \bigg\{\begin{array}{c} \text{rescaling classes of modular systems} \\ \text{of hyperplane functions for $M$ in $P$}\end{array}\bigg\}.
 \]

\end{rem}

\subsection{The universal pasture and the foundation}

Let $\cX^I_{M}(P)$ (resp. $\cX^R_{M}(P)$) be the set of isomorphism classes (resp. rescaling equivalence classes) of $P$-representations of $M$.
It is shown in \cite{Baker-Lorscheid25} that the functors $\cX^I_M$ and $\cX^R_M$ are representable by the universal pasture $\tilde{F}_M$ and the foundation $F_M$, respectively.
This is equivalent to the fact that $\cX^I_M(P) = \Hom(\tilde{F}_M,P)$ (resp. $\cX^R_M(P) = \Hom(F_M,P)$) functorially in $P$.

In particular, in order to show that some pasture $F'$ is isomorphic to the foundation of $M$, it is equivalent to show that for every morphism of pastures $P \to P'$ there is a commutative diagram
\[
 \begin{tikzcd}
  \Hom(F',P) \arrow{r}{\cong}\arrow[d]
  & \cX^R_M(P) \arrow[d] \\
   \Hom(F',P') \arrow{r}{\cong}
  & \cX^R_M(P'). \\
 \end{tikzcd}
\]

We will use this observation (which is a version of the famous Yoneda Lemma in category theory) frequently throughout the paper.
A similar characterization holds, of course, for the universal pasture of $M$.

\begin{ex}\label{ex: foundations of regular matroids}
 As an example, we compute the foundation of a regular matroid. Since a regular matroid $M$ has a unique rescaling class of $P$-representations for every $P$ (which is given by a unimodular matrix), we conclude that $\Hom(F_M, P)=\cX^R_M(P)$ is a singleton for every $P$. In other words, $F_M$ has a unique morphism to any other pasture, which characterizes $F_M$ as the intial object $F_M=\Funpm$ of the category of pastures. 
 
 This holds, in particular, for the foundation $F_p=\Funpm$ of the matroid $M= U^1_1$ of rank $1$ with one element $p$.
\end{ex}

\subsubsection{Induced representations for embedded minors}

Let $\cH$ be a modular system of $P$-hyperplane functions for a matroid $M$ over a pasture $P$, and let $A \subseteq E(M)$.
For $f_H \in \cH$ and $X \subseteq E(M)$, we write $f_H|_X$ for the restriction of the function $f_H$ to $X$.
Define $\cH/A = \{f_H|_{E(M) - A} \mid A \subseteq H\}$, and define $\cH \del A = \{f_H|_{E(M) - A} \mid H - A \textrm{ is a hyperplane of } M \del A\}$.
The following was originally stated in terms of weak $P$-circuits, but we obtain the following statement via the cryptomorphism between weak $P$-circuits and $P$-hyperplane functions.

\begin{thm} \cite[Theorem 3.29]{Baker-Bowler19} \label{thm: induced representation for embedded minor}
Let $M$ be a matroid, let $P$ be  pasture, let $\cH$ be a modular system of $P$-hyperplane functions for $M$, and let $A \subseteq E(M)$.
Then, up to multiplying functions by scalars, $\cH/A$ and $\cH \del A$ are modular systems of $P$-hyperplane functions for $M/A$ and $M \del A$, respectively.
\end{thm}

An \emph{embedded minor} of a matroid $M$ is a minor $N = M \del I/J$ together with the pair $(I,J)$, where $I$ is a coindependent subset and $J$ is an independent subset of $E(M)$ such that $I \cap J = \varnothing$.
Given an embedded minor $N = M\del I/J$ and a $P$-representation of $M$ over a pasture $P$, \autoref{thm: induced representation for embedded minor} gives an induced $P$-representation for $N$.
In general, this representation depends on the choices of $I$ and $J$, meaning that if $N = M\del I/J = M \del I'/J'$, the representation induced by $(I, J)$ may not be rescaling equivalent to the representation induced by $(I', J')$.
However, when $N$ is a restriction of $M$ (or dually, a contraction of $M$), the induced representation is independent of the minor embedding.
Before proving this, we highlight the following corollary of \autoref{thm: induced representation for embedded minor}, which we will use repeatedly in our proofs.

\begin{prop} \label{prop: scaling of induced hyperplane functions}
Let $M$ be a matroid, let $T\subseteq E(M)$, let $P$ be a pasture, and let $\cH$ be a modular system of $P$-hyperplane functions for $M$.
If $H$ and $K$ are hyperplanes of $M$ so that $H \cap T = K \cap T$ and this set is a hyperplane of $M|T$, then the functions $f_H|_T$ and $f_{K}|_T$ are scalar multiples of each other.
\end{prop}

Given a matroid $M$ with $T\subseteq E(M)$, we will use \autoref{prop: scaling of induced hyperplane functions} to define an induced system of hyperplane functions for $M|T$ that is independent of the minor embedding of $M|T$.

\begin{prop} \label{prop: induced hyperplane functions}
Let $M$ be a matroid and let $T$ and $J$ be disjoint subsets of $E(M)$ so that $r(T) + r(J) = r(T \cup J)$.
Let $P$ be a pasture, and let $\cH$ be a modular system of $P$-hyperplane functions for $M$.
Let $\mathscr T_J$ be the set of hyperplanes of $M$ that contain $J$ and whose restriction to $T$ is a hyperplane of $M|T$, and let $\cH|_T = \{f_H|_T \mid H \in \mathscr T_J\}$.
Then, up to multiplying functions by scalars, $\cH|_T$ is a modular system of $P$-hyperplane functions for $M|T$, and is independent of the choice of $J$.
\end{prop}
\begin{proof}
By \autoref{prop: scaling of induced hyperplane functions} we may assume, by rescaling, that if $H$ and $K$ are hyperplanes in $\mathscr T_J$ with $H \cap T = K \cap T$, then $f_H|_T = f_K|_T$.
We will first show that every hyperplane of $M|T$ has an associated function in $\cH|_T$.
Fix a basis $B$ of $M/T$ with $J \subseteq B$.
For each hyperplane $L$ of $M|T$, the set $L' = \cl_M(L \cup B)$ is a hyperplane in $\mathscr T_J$ with $L' \cap T = L$, so $L$ has associated $P$-hyperplane function $f_{L'}|_T \in \cH|_T$.
So it suffices to show that $\cH|_T$ is a modular system.
Let $(L_1, L_2, L_3)$ be a modular triple of hyperplanes of $M|T$, and for each $i \in [3]$ let $L_i' = \cl_M(L_i \cup B)$.
Then $(L_1', L_2', L_3')$ is a modular triple of hyperplanes of $M$, so there are constants $c_1, c_2, c_3 \in P^{\times}$ so that $c_1 \cdot f_{L_1'}(e) + c_2 \cdot f_{L_2'}(e) + c_3 \cdot f_{L_3'}(e) = 0$ for all $e \in E(M)$.
Then $c_1 \cdot f_{L_1}(e) + c_2 \cdot f_{L_2}(e) + c_3 \cdot f_{L_3}(e) = 0$ for all $e \in T$, so $\cH|_T$ is a modular system of $P$-hyperplane functions for $M|T$.
Since $\{f_H|_T \mid H \in \mathscr T_J\} = \{f_H|_T \mid H \in \mathscr T_{\varnothing}\}$ because $\mathscr T_J \subseteq \mathscr T_{\varnothing}$, it follows that the modular system is independent of the choice of $J$.
\end{proof}

Given a matroid $M$ with $T \subseteq E(M)$, a pasture $P$, and a $P$-representation $\cH$ of $M$, we define $\cH|_T = \{f_H|_T \mid H \textrm{ is a hyperplane of } M|T\}$. Let $E(M)-T=I\sqcup J$ be a decomposition of the complement of $T$ in $M$ into a coindependent set $I$ and an independent set $J$. Then $M|T\simeq M\minor{I}{J}$, which induces a morphism of foundations
\[
 \iota_{M|T}: F_T \ \simeq \ F_{M\minor{I}{J}} \ \longrightarrow \ F_M
\]
where we write $F_T$ for $F_{M|T}$. 

\begin{lemma}\label{lemma: foundation map for restriction is independent on minor embedding}
 The morphism $\iota_{M|T}$ does not depend on the choices of $I$ and $J$.
\end{lemma}

\begin{proof}
 Two choices of decompositions $E(M)-T=I_i\sqcup J_i$ (for $i=1,2$) induce two morphisms $\iota_i: F_T\to F_M$, each arising from the restriction of (the rescaling classes of) a modular system of $P$-hyperplane functions of $M$ to $M|T$. Since these restrictions are independent of the choices of the decomposition $E(M)-T=I_i\sqcup J_i$, this means that the induced morphism of functors $\Hom(F_M,-)\to\Hom(F_T,-)$ is independent of $E(M)-T=I_i\sqcup J_i$. By the Yoneda lemma, this means that the morphism $F_T\to F_M$ is independent of this decomposition.
\end{proof}

As a consequence, the tensor product $F_{M_1}\otimes_{F_T}F_{M_2}$ of the foundations of two matroids $M_1$ and $M_2$ with common restriction $M_1|T=M_2|T$ has an intrinsic meaning that does not depend on the choice of minor embeddings of $M|T$ into $M_1$ and $M_2$.

\subsubsection{Cross ratios} \label{sec:cross ratios}

Let $\Omega_M$ be the collection of $5$-tuples $(J; e_1,e_2,e_3,e_4)$, where $J$ is an independent subset of $E(M)$ of cardinality $r-2$ and $e_1,e_2,e_3,e_4\in E(M)$ are elements such that $Je_ie_j$ is a basis for all $i\in\{1,2\}$ and $j\in\{3,4\}$, writing $Je_ie_j$ for $J \cup \{e_i, e_j\}$. This means in particular that $Je_i$ has rank $r-1$, and thus $H_i=\cl(Je_i)$ is a hyperplane, and that $e_j\notin H_i$ for $i\in\{1,2\}$ and $j\in\{3,4\}$.

The identification $\Hom(F_M,F_M)=\cX^R_M(F_M)$ associates with the identity map $\id:F_M\to F_M$ the 
\emph{universal rescaling class of $M$}, which is the rescaling class 
of some $F_M$-representation $\{f_H\mid H\in\cH\}$ of $M$. We define the \emph{universal cross ratio of $(J; e_1,e_2,e_3,e_4)\in\Omega_M$} as
\[
 \cross{e_1}{e_2}{e_3}{e_4}{J} \ = \ \frac{f_{H_1}(e_3)\cdot f_{H_2}(e_4)}{f_{H_1}(e_4)\cdot f_{H_2}(e_3)},
\]
where $H_i=\cl(Je_i)$. Since rescaling by $c=\big((c_e),(c_H)\big)\in (P^\times)^E\times(P^\times)^\cH$ yields 
\[
 \frac{(c f_{H_1})(e_3)\cdot (cf_{H_2})(e_4)}{(cf_{H_1})(e_4)\cdot (cf_{H_2})(e_3)} \ = \ \frac{c_{H_1}c_{e_3}f_{H_1}(e_3)\cdot c_{H_2}c_{e_4}f_{H_2}(e_4)}{c_{H_1}c_{e_4}f_{H_1}(e_4)\cdot c_{H_2}c_{e_3}f_{H_2}(e_3)} \ = \ \frac{f_{H_1}(e_3)\cdot f_{H_2}(e_4)}{f_{H_1}(e_4)\cdot f_{H_2}(e_3)},
\]
the universal cross ratio $\cross{e_1}{e_2}{e_3}{e_4}{J}$ depends only on the universal rescaling class, which shows that $\cross{e_1}{e_2}{e_3}{e_4}{J}$ is a well-defined element of $F_M$.

We have $\cross{e_1}{e_2}{e_3}{e_4}{J}=1$ if $Je_1e_2$ or $Je_3e_4$ is not a basis, i.e., if $H_1=H_2$ or $\cl(Je_3)=\cl(Je_4)$. In these cases, we say that $\cross{e_1}{e_2}{e_3}{e_4}{J}$ is \emph{degenerate}.

A more profound result, which is a consequence to Tutte's path theorem \cite[Theorem 5.1]{Tutte58a}, is that $F_M^\times$ is generated by $-1$ and all universal cross ratios \cite[Corollary 7.11]{Baker-Lorscheid21b}. Similarly, Tutte's homotopy theorem \cite[Theorem 6.1]{Tutte58a} can be used to exhibit a complete system of relations between the cross ratios as elements of $F_M^\times$ (see \cite[Theorem 4.19]{Baker-Lorscheid25}), but we won't need this latter result for our purposes.

\subsection{Facts about generalized parallel connections}

Throughout this section, let $M_1,M_2$ be matroids
with ground sets $E_1$ and $E_2$, respectively, with $E_1 \cap E_2 = T$ such that $M_1|T = M_2|T$ and $T$ is a modular flat in $M_2$.

We have the following formula for the rank of flats in $P_T(M_1, M_2)$. 

\begin{prop}\cite[Proposition 5.5]{Brylawski75}
\label{prop:rank function of the parallel connection}
If $r,r_1,r_2$ are the rank functions of $P_T(M_1,M_2)$, $M_1$, and $M_2$ respectively, then for any flat $F$ of $P_T(M_1,M_2)$ we have:
\renewcommand{\theequation}{\alph{equation}}
\setcounter{equation}{0}
\begin{equation} \label{eq:rank-of-flats-in-generalized-parallel-connection}
r(F) = r_1(F\cap E_1) +r_2(F\cap E_2) - r_1(F\cap T).
\end{equation}
In particular, 
\begin{equation} \label{eq:rank-of-generalized-parallel-connection}
r(P_T(M_1,M_2)) = r(M_1) + r(M_2) - r(M_1|T).
\end{equation}
\end{prop}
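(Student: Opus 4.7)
The formula \eqref{eq:rank-of-generalized-parallel-connection} for the rank of $P_T(M_1,M_2)$ is the specialization of \eqref{eq:rank-of-flats-in-generalized-parallel-connection} to $F = E_1\cup E_2$, which is itself a flat of $P_T(M_1,M_2)$ since its intersection with $E_i$ is all of $E_i$. I will therefore focus on proving \eqref{eq:rank-of-flats-in-generalized-parallel-connection}, and the strategy is to exhibit an explicit basis of $F$ whose cardinality matches the right-hand side.

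\textbf{Exhibiting a basis.} Write $F_i := F\cap E_i$ and $F_T := F\cap T = F_1\cap T = F_2\cap T$. By the flat-characterization of $P_T(M_1,M_2)$, each $F_i$ is a flat of $M_i$, and since $T$ is itself a flat of $M_i$, the intersection $F_T$ is a flat of both $M_1$ and $M_2$ with $r_1(F_T)=r_2(F_T)$ (using $M_1|T = M_2|T$). Choose a basis $C_T$ of $F_T$, extend it inside $M_1$ to a basis $C_T\sqcup C_1$ of $F_1$, and independently extend it inside $M_2$ to a basis $C_T\sqcup C_2$ of $F_2$. Setting $B := C_T\sqcup C_1\sqcup C_2$, one has by construction $|B| = r_1(F_1) + r_2(F_2) - r_1(F_T)$. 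Establishing that $B$ is a basis of $F$ in $P_T(M_1,M_2)$ then yields \eqref{eq:rank-of-flats-in-generalized-parallel-connection}.

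\textbf{Spanning and the main obstacle.} Spanning is routine: the closure $\cl_P(B)$ is a flat of $P_T(M_1,M_2)$, so its intersection with $E_i$ is a flat of $M_i$ containing $C_T\sqcup C_i$ and hence contains $F_i$; therefore $\cl_P(B) \supseteq F_1\cup F_2 = F$. The main technical obstacle is independence of $B$ in $P_T(M_1,M_2)$, and this is where the modularity of $T$ in $M_2$ is essential. A hypothetical minimal dependency within $B$ would force a circuit meeting both $E_1\setminus T$ and $E_2\setminus T$; the modular identity $r_2(F_2)+r_2(T)=r_2(F_T)+r_2(F_2\cup T)$ lets one ``transfer'' such a crossing circuit through $T$, producing a dependency contained entirely inside $M_1$ or entirely inside $M_2$, contradicting the independence of $C_T\sqcup C_1$ and $C_T\sqcup C_2$ in the respective factors. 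Equivalently, one may invoke Brylawski's theorem that, when $T$ is a modular flat of $M_2$, the amalgam $P_T(M_1,M_2)$ is a well-defined matroid whose rank function is determined by the flat lattice, and then verify by a direct count that the rank of $F$ matches $|B|$. The modularity hypothesis cannot be dropped, since in its absence the flat-union construction need not produce a matroid at all.
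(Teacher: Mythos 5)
The paper does not give its own proof of this proposition; it is cited to Brylawski \cite{Brylawski75} and used as a black box in what follows. So I can only evaluate your sketch on its own terms, and there is a genuine gap in the independence step, which you yourself identify as the crux.

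Your plan of exhibiting an explicit basis $B = C_T \sqcup C_1 \sqcup C_2$ of $F$ is a reasonable one, and the spanning half is correctly argued from the flat description of $P_T(M_1,M_2)$. But independence is not actually proved. Two concrete problems. First, even granting Brylawski's existence theorem, speaking of ``a circuit of $B$'' and asserting that it ``would force a circuit meeting both $E_1\setminus T$ and $E_2\setminus T$'' presupposes that $C_T\sqcup C_1$ and $C_T\sqcup C_2$ are independent in $P_T(M_1,M_2)$, i.e., that the restrictions $P_T(M_1,M_2)|E_i$ coincide with $M_i$. That is true, but it requires an argument from the flat description and cannot simply be taken for granted. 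Second, and more seriously, the sentence that modularity ``lets one `transfer' such a crossing circuit through $T$, producing a dependency contained entirely inside $M_1$ or entirely inside $M_2$'' names the conclusion without supplying the argument: you do not say what ``transfer'' means, why the result lies inside a single $E_i$, or how the displayed modular identity enters. Your fallback --- invoke Brylawski's theorem and ``verify by a direct count that the rank of $F$ matches $|B|$'' --- is circular, since that count is precisely the formula being proved. A non-circular route is to define $\rho(A) := \min\{\,r_1(Z\cap E_1)+r_2(Z\cap E_2)-r_T(Z\cap T) : Z \text{ a flat of } P_T(M_1,M_2),\ Z\supseteq A\,\}$, prove that $\rho$ is a matroid rank function (this is where modularity of $T$ in $M_2$ does the work, via submodularity and unit increase), check that its closed sets are the specified flats, and then note that for a flat $F$ the minimum is attained at $Z=F$, which yields \eqref{eq:rank-of-flats-in-generalized-parallel-connection} directly.
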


When $T$ is modular in both $M_1$ and $M_2$, there is a straightforward description of the hyperplanes of $P_T(M_1, M_2)$.

\begin{prop}\cite[Proposition 22]{HOCHSTATTLER2011841}\label{prop:hyperplanes of the parallel connection}
Assume that $T$ is a modular flat in both $M_1$ and $M_2$.
A subset $H \subseteq E_1 \cup E_2$ is a hyperplane of $P_T(M_1,M_2)$ if and only if
\begin{enumerate}
\item $H\cap E_1$ is a hyperplane of $M_1$ that contains $T$, and $H$ contains $E_2$, or
\item $H\cap E_2$ is a hyperplane of $M_2$ that contains $T$, and $H$ contains $E_1$, or
\item $H\cap E_i$ is a hyperplane of $M_i$ for $i=1,2$, and $r_{M_1}(H \cap T) = r_{M_1}(T) - 1$.
\end{enumerate}
\end{prop}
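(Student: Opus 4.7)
My plan is to translate the hyperplane condition $r(H) = r(P)-1$ (with $P = P_T(M_1,M_2)$) into a single linear equation in three ``corank'' parameters, then use modularity of $T$ in both $M_1$ and $M_2$ to cut down the possibilities. Concretely, for a flat $H$ of $P$, I write $H_i := H \cap E_i$ (a flat of $M_i$ by the defining property of the generalized parallel connection) and introduce the nonnegative integers $a := r_1(M_1) - r_1(H_1)$, $b := r_2(M_2) - r_2(H_2)$, and $c := r_1(T) - r_1(H_1 \cap T)$; note that $c$ is well-defined since $H_1 \cap T = H \cap T = H_2 \cap T$ and $M_1|T = M_2|T$. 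Subtracting \eqref{eq:rank-of-flats-in-generalized-parallel-connection} from \eqref{eq:rank-of-generalized-parallel-connection}, the condition that $H$ is a hyperplane becomes the single equation $a + b = c + 1$.

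The heart of the argument is to apply modularity. For each $i \in \{1,2\}$, the flat $H_i$ of $M_i$ satisfies
\[ r_i(H_i \cup T) \ = \ r_i(H_i) + r_i(T) - r_i(H_i \cap T) \ = \ r_i(H_i) + c, \]
and combining this with $r_i(H_i \cup T) \leq r_i(M_i)$ yields $c \leq a$ and $c \leq b$. Substituting into $a + b = c + 1$ forces $2c \leq c + 1$, so $c \in \{0, 1\}$.

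From here the case analysis is essentially immediate. If $c = 0$, then $H_i \cap T = T$ (so $T \subseteq H$) and $a + b = 1$, giving $(a,b) \in \{(1,0), (0,1)\}$; these correspond exactly to cases (1) and (2), once one notes that a flat of $M_i$ of corank $0$ must equal $E_i$. If $c = 1$, then $T \not\subseteq H$, and the bounds $a, b \geq 1$ combined with $a + b = 2$ force $a = b = 1$, which is case (3). For the converse, each of the three forms explicitly produces subsets $H_1 \subseteq E_1$ and $H_2 \subseteq E_2$ that are flats of $M_1$ and $M_2$ with $H_1 \cap T = H_2 \cap T$, so their union is a flat of $P$; a direct check shows that the coranks $(a,b,c)$ satisfy $a+b-c = 1$, which gives $r(H) = r(P) - 1$.

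I expect the only nonroutine step to be packaging modularity as the inequality $c \leq \min(a, b)$; after that the argument is essentially arithmetic bookkeeping, so there should be no serious obstacle.
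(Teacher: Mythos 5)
Your proof is correct and follows essentially the same approach as the paper's: the paper also sets $k := r(T) - r(H \cap T)$ (your $c$), uses modularity of $T$ in each $M_i$ to bound $r(H \cap E_i) \le r(E_i) - k$ (your $c \le a$, $c \le b$), and combines this with the rank formula to conclude $k \le 1$ and then split into the three cases. The only difference is cosmetic: your notation makes the arithmetic reduction $a + b = c + 1$ a bit more explicit, and you say a word about the converse direction, which the paper leaves implicit.
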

\begin{proof}
    Let $r$ be the rank function of $P_T(M_1,M_2)$.
    First suppose that $H$ is a hyperplane of $P_T(M_1, M_2)$.
    Then $H \cap E_i$ is a flat of $M_i$ for $i = 1,2$.
    Let $r(H \cap T) = r(T) - k$ where $0 \le k \le r(T)$.
    Since $T$ is a modular flat in $M_i$ we have
    \begin{align*}
        r(T) + r(H \cap E_i) &= r(T \cap H) + r((T \cup H) \cap E_i) \\
        &= r(T) - k + r((T \cup H) \cap E_i) \\
        &\le r(T) - k + r(E_i),
    \end{align*}
    and it follows that $r(H \cap E_i) \le r(E_i) - k$.
    Then we have 
    \begin{align*}
    r(H) &= r(E_1) + r(E_2) - r(T) - 1 \\
    &= r(H \cap E_1) + r(H \cap E_2) - r(H \cap T) \\
    &= r(H \cap E_1) + r(H \cap E_2) - (r(T) - k) \\
    & \le (r(E_1) - k) + (r(E_2) - k) - (r(T) - k) \\
    &= r(E_1) + r(E_2) - r(T) - k,
    \end{align*}
    where the first line follows from \eqref{eq:rank-of-generalized-parallel-connection} and the fact that $H$ is a hyperplane of $P_T(M_1, M_2)$, and the second follows from \eqref{eq:rank-of-flats-in-generalized-parallel-connection}.
    By comparing the first and last lines, we see that $k \le 1$.
    By comparing the first and third lines, we have
    \renewcommand{\theequation}{\alph{equation}}
    \begin{equation} \label{eq:rank-of-generalized-parallel-connection-hyperplane}
    r(E_1) + r(E_2) - 1 = r(H \cap E_1) + r(H \cap E_2) + k.
    \end{equation}
    If $k = 0$, then $r(H \cap T) = r(T)$, and since $H$ is a flat of $P_T(M_1, M_2)$ it follows that $T \subseteq H$.
    By (\ref{eq:rank-of-generalized-parallel-connection-hyperplane}), there is some $j \in \{1,2\}$ so that $r(H \cap E_j) = r(E_j) - 1$ and $r(H \cap E_{3-j}) = r(E_{3-j})$.
    Since $H \cap E_i$ is a flat of $M_i$ for $i = 1,2$ by the definition of $P_T(M_1, M_2)$, it follows that if $j = 1$ then (1) holds, and if $j = 2$ then (2) holds.
    If $k = 1$, then $r(H \cap T) = r(T) - 1$.
    By (\ref{eq:rank-of-generalized-parallel-connection-hyperplane}) and the observation that $r(H \cap E_i) \le r(E_i) - k$ for $i = 1,2$ we see that $r(H \cap E_i) = r(E_i) - 1$ for $i = 1,2$.
    Since $H \cap E_i$ is a flat of $M_i$ for $i = 1,2$ by the definition of $P_T(M_1, M_2)$, we see that (3) holds.

    Conversely, suppose that (1), (2), or (3) holds for $H$. 
    Since $H \cap E_i$ is a flat of $M_i$ for $i = 1,2$, it follows from the definition of $P_T(M_1, M_2)$ that $H$ is a flat of $P_T(M_1, M_2)$, so it suffices to show that $r(H) = r(P_T(M_1, M_2)) - 1$.
    If (1) or (2) holds, then by (\ref{eq:rank-of-flats-in-generalized-parallel-connection}) we see that $r(H) = r(M_1) + r(M_2) - r(T) - 1$, and it follows from (\ref{eq:rank-of-generalized-parallel-connection}) that $r(H) = r(P_T(M_1, M_2)) - 1$.
    If (3) holds, then by (\ref{eq:rank-of-flats-in-generalized-parallel-connection}) we see that $r(H) = r(M_1) + r(M_2) - r(T) - 1$, and by (\ref{eq:rank-of-generalized-parallel-connection}) it follows that $r(H) = r(P_T(M_1, M_2)) - 1$.
\end{proof}

A similar result holds for corank-2 flats.

\begin{prop} \label{prop: corank-2 flats of the parallel connection}
Assume that $T$ is a modular flat in both $M_1$ and $M_2$. A subset $F\subseteq E_1 \cup E_2$ is a corank-2 flat of $P_T(M_1,M_2)$ if and only if
\begin{enumerate}
\item $T \subseteq F$ and there is some $i \in \{1,2\}$ so that $E_i \subseteq F$ and $F \cap E_{3-i}$ is a corank-2 flat of $M_{3-i}$, or

\item $T \subseteq F$ and $F \cap E_i$ is a hyperplane of $M_i$ for $i = 1,2$, or

\item $r_{M_1}(F \cap T) = r_{M_1}(T) - 1$, and there is some $i \in \{1,2\}$ so that $F \cap E_i$ is a hyperplane of $M_i$ and $F \cap E_{3-i}$ is a corank-2 flat of $M_{3-i}$, or

\item $r_{M_1}(F \cap T) = r_{M_1}(T) - 2$, and $F \cap E_i$ is a corank-2 flat of $M_i$ for $i = 1,2$.
\end{enumerate}
\end{prop}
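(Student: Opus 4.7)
The strategy is to mimic the proof of \autoref{prop:hyperplanes of the parallel connection}: combine the rank formula \eqref{eq:rank-of-flats-in-generalized-parallel-connection} with the modularity of $T$ in both $M_1$ and $M_2$ to control how the rank of $F$ can drop relative to the top rank $r(P_T(M_1,M_2)) = r(M_1) + r(M_2) - r(T)$.

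For the forward direction, I would suppose $F$ is a corank-2 flat of $P_T(M_1,M_2)$ and introduce the three corank parameters
\[
a = r(M_1) - r_1(F\cap E_1), \qquad b = r(M_2) - r_2(F\cap E_2), \qquad k = r(T) - r_1(F\cap T).
\]
Substituting $r(F) = r(M_1)+r(M_2)-r(T)-2$ into \eqref{eq:rank-of-flats-in-generalized-parallel-connection} rearranges to the single identity $a+b = k+2$. On the other hand, modularity of $T$ in $M_i$ applied to the flats $T$ and $F\cap E_i$ gives
\[
r_i(T) + r_i(F\cap E_i) = r_i(F\cap T) + r_i(T \cup (F\cap E_i)) \le r_i(F\cap T) + r(M_i),
\]
which forces $a \ge k$ and $b \ge k$. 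Together with $a+b=k+2$, this restricts us to $k \in \{0,1,2\}$.

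The case analysis is then immediate. When $k=2$, the only possibility is $(a,b) = (2,2)$, giving case (4). When $k=1$, the two possibilities $(a,b) \in \{(1,2),(2,1)\}$ give the two branches of case (3). When $k=0$, since $F\cap T$ is the intersection of the flats $F\cap E_1$ and $T$ of $M_1$ and hence itself a flat of $M_1$, and since it spans $T$, we obtain $T \subseteq F$; the three possibilities $(a,b) \in \{(2,0),(0,2),(1,1)\}$ then correspond to case (1) with $i=2$, case (1) with $i=1$, and case (2), respectively. Conversely, in each of (1)--(4), the sets $F\cap E_i$ are flats of $M_i$ by hypothesis (being $E_i$, a hyperplane, or a corank-2 flat), so $F$ is a flat of $P_T(M_1,M_2)$; direct substitution of the specified ranks into \eqref{eq:rank-of-flats-in-generalized-parallel-connection} then yields $r(F) = r(M_1) + r(M_2) - r(T) - 2 = r(P_T(M_1,M_2)) - 2$.

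The only point requiring care is the step at $k=0$, where one needs $T\subseteq F$ and not merely that the closure of $F\cap T$ in $M_1$ contains $T$; this is where the fact that $F\cap E_1$ and $T$ are both flats of $M_1$, so that $F\cap T$ is closed in $M_1$, is used essentially. Everything else is bookkeeping with the rank formula and the modularity inequality.
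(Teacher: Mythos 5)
Your proof is correct and follows essentially the same route as the paper's: the corank parameter $k = r(T) - r(F\cap T)$, the Brylawski rank formula, and the modularity bound $r_i(F\cap E_i) \le r(M_i) - k$ are exactly the paper's ingredients, and your $a + b = k + 2$ is the paper's ``comparing the first and third lines'' identity in disguise. You do add two small things the paper leaves implicit: a clean justification that $k=0$ forces $T\subseteq F$ (via $F\cap T$ being a flat of $M_1$ of full rank $r(T)$), and an explicit check of the converse direction; both are correct and worth spelling out, but they don't change the method.
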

\begin{proof}
    Let $r$ be the rank function of $P_T(M_1,M_2)$.
    First suppose that $F$ is a corank-2 flat of $P_T(M_1, M_2)$. 
    Then $F \cap E_i$ is a flat of $M_i$ for $i = 1,2$.
    Let $r(F \cap T) = r(T) - k$ where $0 \le k \le r(T)$.
    As in the proof of \autoref{prop:hyperplanes of the parallel connection}, 
    we know that $r(F \cap E_i) \le r(E_i) - k$ for $i = 1,2$.
    Then we have 
    \begin{align*}
    r(F) &= r(E_1) + r(E_2) - r(T) - 2 \\
    &= r(F \cap E_1) + r(F \cap E_2) - r(F \cap T) \\
    &= r(F \cap E_1) + r(F \cap E_2) - (r(T) - k) \\
    & \le (r(E_1) - k) + (r(E_2) - k) - (r(T) - k) \\
    &= r(E_1) + r(E_2) - r(T) - k,
    \end{align*}
    where the first line follows from \eqref{eq:rank-of-generalized-parallel-connection} and the fact that $F$ is a corank-2 flat of $P_T(M_1, M_2)$, 
    and the second follows from \eqref{eq:rank-of-flats-in-generalized-parallel-connection}.
    By comparing the first and last lines, we see that $k \le 2$.
    By comparing the first and third lines, we have
    \renewcommand{\theequation}{\alph{equation}}
    \begin{equation} \label{eq:rank-of-generalized-parallel-connection-flat}
    r(E_1) + r(E_2) - 2 = r(F \cap E_1) + r(F \cap E_2) + k.
    \end{equation}
     If $k = 0$, then $T \subseteq F$.
    By (\ref{eq:rank-of-generalized-parallel-connection-flat}), either there is some $j \in \{1,2\}$ so that $r(F \cap E_j) = r(E_j) - 1$ and $r(F \cap E_{3-j}) = r(E_{3-j})$ and (1) holds because $F \cap E_i$ is a flat for $i = 1,2$, or $r(F \cap E_j) = r(E_j) - 1$ for $j = 1,2$ and (2) holds.
    If $k = 1$, then $r(F \cap T) = r(T) - 1$.
    By (\ref{eq:rank-of-generalized-parallel-connection-flat}) and the observation that $r(F \cap E_i) \le r(E_i) - k$ for $i = 1,2$ we see that (3) holds.
    If $k = 2$, then $r(F \cap T) = r(T) - 2$.
    By (\ref{eq:rank-of-generalized-parallel-connection-flat}) and the observation that $r(F \cap E_i) \le r(E_i) - k$ for $i = 1,2$ we see that (4) holds.

    Conversely, suppose that (1), (2), (3), or (4) holds for $F$.
    Since $F \cap E_i$ is a flat of $M_i$ for $i = 1,2$, it follows from the definition of $P_T(M_1, M_2)$ that $F$ is a flat of $P_T(M_1, M_2)$.
    In each case it follows directly from (\ref{eq:rank-of-flats-in-generalized-parallel-connection}) that $r(F) = r(M_1) + r(M_2) - r(T) - 2$, and by (\ref{eq:rank-of-generalized-parallel-connection}) it follows that $F$ is a corank-2 flat of $P_T(M_1, M_2)$.
\end{proof}

We will also need analogous results when $r(T) = 2$ and $T$ is not assumed to be modular in $M_1$.
We replace $T$ with $X$ here, because we will apply this result in the case that $M_2 = \Theta_n$.

\begin{prop} \label{Prop:hyperplanes of the parallel connection (r(T) = 2)} 
Let $M_1,M_2$ be matroids with ground sets $E_1$ and $E_2$, respectively, with $E_1 \cap E_2 = X$ such that $M_1|X = M_2|X$ and $X$ is a modular flat in $M_2$.
Assume furthermore that $M_2|X \cong U_{2,n}$ for some $n \ge 2$. 
A subset $H \subseteq E_1 \cup E_2$ is a hyperplane of $P_X(M_1, M_2)$ if and only if  
     \begin{enumerate}
         \item $E_1 \subseteq H$ and $H \cap E_2$ is a hyperplane of $M_2$ that contains $X$, or
         
         \item $E_2 \subseteq H$ and $H \cap E_1$ is a hyperplane of $M_1$ that contains $X$, or

         \item $H \cap E_i$ is a hyperplane of $M_i$ for $i = 1,2$ and $|H \cap X| = 1$, or
         
         \item $H \cap E_1$ is a hyperplane of $M_1$ that is disjoint from $X$, and $H \cap E_2$ is a corank-2 flat of $M_2$ that is disjoint from $X$.
     \end{enumerate}
\end{prop}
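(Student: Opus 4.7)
The plan is to mimic the proofs of \autoref{prop:hyperplanes of the parallel connection} and \autoref{prop: corank-2 flats of the parallel connection}, adapting the rank-counting argument to the asymmetric situation where $X$ is modular only in $M_2$. Let $r$ denote the rank function of $P_X(M_1,M_2)$ and set $k = 2 - r_1(H \cap X)$, so that $k \in \{0,1,2\}$ since $r_1(X) = 2$. Combining \eqref{eq:rank-of-flats-in-generalized-parallel-connection} with the hyperplane identity $r(H) = r(M_1) + r(M_2) - 3$ yields
\[
r_1(H \cap E_1) + r_2(H \cap E_2) \ = \ r(M_1) + r(M_2) - 1 - k.
\]
Modularity of $X$ in $M_2$ gives, exactly as in the proof of \autoref{prop:hyperplanes of the parallel connection}, the bound $r_2(H \cap E_2) \le r(M_2) - k$, and substituting this into the display forces $r_1(H \cap E_1) \ge r(M_1) - 1$. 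The analogous bound for $i = 1$ is unavailable, but (as explained below) the low rank of $X$ together with $M_1|X \cong U_{2,n}$ will compensate.

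Since $H \cap E_1$ is a flat of $M_1$, either $r_1(H \cap E_1) = r(M_1)$, in which case $H \cap E_1 = E_1$, so $E_1 \subseteq H$, whence $X \subseteq H$ and $k = 0$; the display then forces $r_2(H \cap E_2) = r(M_2) - 1$, making $H \cap E_2$ a hyperplane of $M_2$ containing $X$, i.e., case (1). Otherwise $r_1(H \cap E_1) = r(M_1) - 1$, and the display forces $r_2(H \cap E_2) = r(M_2) - k$. When $k = 0$ this means $H \cap E_2 = E_2$, hence $X \subseteq H$ and $H \cap E_1$ is a hyperplane of $M_1$ containing $X$, giving case (2). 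When $k = 1$, both $H \cap E_i$ are hyperplanes of $M_i$; since $r_1(H \cap X) = 1$ and $M_1|X \cong U_{2,n}$ has every singleton independent, $|H \cap X| = 1$, giving case (3). When $k = 2$ we have $H \cap X = \varnothing$ and $H \cap E_2$ is a corank-$2$ flat of $M_2$ disjoint from $X$, giving case (4).

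The converse direction is a direct verification: for each of (1)--(4) one checks that $H \cap E_i$ is a flat of $M_i$ (so that $H$ is a flat of $P_X(M_1,M_2)$), reads off $r_1(H \cap X)$ from the prescribed intersection with $X$, and substitutes into \eqref{eq:rank-of-flats-in-generalized-parallel-connection} to confirm $r(H) = r(M_1) + r(M_2) - 3$. The main obstacle is the asymmetry introduced by dropping modularity of $X$ in $M_1$: one cannot directly bound $r_1(H \cap E_1)$ from $r_1(H \cap X)$ via a modular equality. The workaround is to bound $r_2(H \cap E_2)$ using modularity in $M_2$, then read $r_1(H \cap E_1)$ off the sum identity, and finally exploit $M_1|X \cong U_{2,n}$ to recover $|H \cap X|$ from $r_1(H \cap X)$ in order to distinguish cases (3) and (4).
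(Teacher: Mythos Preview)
Your proof is correct and follows essentially the same approach as the paper: both use the rank formula \eqref{eq:rank-of-flats-in-generalized-parallel-connection} together with modularity of $X$ in $M_2$ to bound $r_2(H\cap E_2)\le r(M_2)-k$, then perform a case analysis on $k$. Your organization differs only cosmetically (you first deduce $r_1(H\cap E_1)\ge r(M_1)-1$ and split on that, whereas the paper splits directly on $k$), and you spell out the converse direction that the paper leaves implicit.
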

\begin{proof}
    Let $r$ be the rank function of $P_X(M_1,M_2)$.
    First suppose that $H$ is a hyperplane of $P_X(M_1, M_2)$.
    Then $H \cap E_i$ is a flat of $M_i$ for $i = 1,2$.
    Let $r(H \cap X) = r(X) - k$ where $0 \le k \le 2$.
    Then we have 
    \begin{align*}
    r(H) &= r(E_1) + r(E_2) - r(X) - 1 \\
    &= r(H \cap E_1) + r(H \cap E_2) - r(H \cap X) \\
    &= r(H \cap E_1) + r(H \cap E_2) - (r(X) - k),
    \end{align*}
    where the first line follows from \eqref{eq:rank-of-generalized-parallel-connection} and the fact that $H$ is a hyperplane of $P_X(M_1, M_2)$, and the second follows from \eqref{eq:rank-of-flats-in-generalized-parallel-connection}.
    It follows that 
    \renewcommand{\theequation}{\alph{equation}}
    \begin{equation} \label{eq:rank-of-generalized-parallel-connection-X}
    r(E_1) + r(E_2) - 1 = r(H \cap E_1) + r(H \cap E_2) + k.
    \end{equation}
    If $k = 0$ then $X \subseteq H$ because $H$ is a flat, and it follows from (\ref{eq:rank-of-generalized-parallel-connection-X}) that (1) or (2) holds.
    If $k = 1$ then $|H \cap X| = 1$ because $M_i|X$ is simple, and $r(H \cap E_i) < r(E_i)$ for $i = 1,2$ because $H$ does not contain $X$.
    Then it follows from (\ref{eq:rank-of-generalized-parallel-connection-X}) that (3) holds.
    Finally, if $k = 2$ then $X \cap H  = \varnothing$.
    Since $X$ is modular flat in $M_2$, we know that 
    $$r(X) + r(H \cap E_2) = r(X \cup (H \cap E_2)),$$
    and since $r(X) = 2$ and $r(X \cup (H \cap E_2)) \le r(E_2)$
    it follows that $r(H \cap E_2) \le r(E_2) - 2$.
    Since $X \cap H = \varnothing$ we know that $r(H \cap E_1) \le r(E_1) - 1$, and now (\ref{eq:rank-of-generalized-parallel-connection-X}) implies that (4) holds.

    Conversely, suppose that (1), (2), (3), or (4) holds for $H$.
    Since $H \cap E_i$ is a flat of $M_i$ for $i = 1,2$, it follows from the definition of $P_T(M_1, M_2)$ that $H$ is a flat of $P_T(M_1, M_2)$, so it suffices to show that $r(H) = r(P_T(M_1, M_2)) - 1$.
    In each case it follows directly from (\ref{eq:rank-of-flats-in-generalized-parallel-connection}) that $r(H) = r(M_1) + r(M_2) - r(T) - 1$, and then (\ref{eq:rank-of-generalized-parallel-connection}) implies that $r(H) = r(P_T(M_1, M_2)) - 1$.
\end{proof}

A similar result holds for corank-2 flats.

\begin{prop} \label{Prop:corank-2 flats of the parallel connection (r(T) = 2)}
With hypotheses as in \autoref{Prop:hyperplanes of the parallel connection (r(T) = 2)},
a subset $F \subseteq E_1 \cup E_2$ is a corank-2 flat of $P_X(M_1, M_2)$ if and only if  
     \begin{enumerate}
         \item $E_1 \subseteq F$ and $F \cap E_2$ is a corank-2 flat of $M_2$ that contains $X$, 
         
         \item $E_2 \subseteq F$ and $F \cap E_1$ is a corank-2 flat of $M_1$ that contains $X$, 

         \item For each $i = 1,2$, $F \cap E_i$ is a hyperplane of $M_i$ that contains $X$, 

         \item $|F \cap X| = 1$, $F \cap E_1$ is a hyperplane of $M_1$, and $F \cap E_2$ is a corank-2 flat of $M_2$, or 

         \item $|F \cap X| = 1$, $F \cap E_1$ is a corank-2 flat of $M_1$, and $F \cap E_2$ is a hyperplane of $M_2$,
         
         \item $F \cap X = \varnothing$, $F \cap E_1$ is a hyperplane of $M_1$, and $F \cap E_2$ is a corank-3 flat of $M_2$, or

         \item $F \cap X = \varnothing$, and $F \cap E_i$ is a corank-2 flat of $M_i$ for $i = 1,2$.
     \end{enumerate}
\end{prop}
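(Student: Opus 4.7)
The plan is to mirror the argument used in \autoref{Prop:hyperplanes of the parallel connection (r(T) = 2)}, but one rank deeper and with more careful bookkeeping since we now have to consider three possible values of $k := r(X) - r(F\cap X)$, giving seven distinct combinatorial configurations rather than four. First, let $F$ be a corank-2 flat of $P_X(M_1,M_2)$, so that by \eqref{eq:rank-of-generalized-parallel-connection} we have $r(F) = r(E_1)+r(E_2)-r(X)-2 = r(E_1)+r(E_2)-4$, and by \eqref{eq:rank-of-flats-in-generalized-parallel-connection} we get
\[
r(F\cap E_1) + r(F\cap E_2) \;=\; r(E_1)+r(E_2)-2-k.
\]
The crucial auxiliary estimate comes from modularity of $X$ in $M_2$ applied to the flat $F\cap E_2$ of $M_2$: it gives $r(F\cap E_2)\leq r(E_2)-k$, which combined with the identity above yields $r(F\cap E_1)\geq r(E_1)-2$. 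Since $F\cap E_1$ is a flat of $M_1$ and $F\cap E_2$ is a flat of $M_2$, their ranks are constrained to a small finite list of possibilities.

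Next I would run a case analysis on $k\in\{0,1,2\}$. For $k=0$ we have $X\subseteq F$ and the sum $r(F\cap E_1)+r(F\cap E_2)=r(E_1)+r(E_2)-2$ forces the rank pair to be $(r(E_1),r(E_2)-2)$, $(r(E_1)-2,r(E_2))$, or $(r(E_1)-1,r(E_2)-1)$; using that a flat of full rank must coincide with the ground set, these produce cases (1), (2), and (3) respectively. For $k=1$, since $X\cong U_{2,n}$ the condition $r(F\cap X)=1$ translates into $|F\cap X|=1$; also $F\cap E_i\ne E_i$ for both $i$ (otherwise $X\subseteq F$), so the admissible rank pairs are $(r(E_1)-1,r(E_2)-2)$ and $(r(E_1)-2,r(E_2)-1)$, giving cases (4) and (5). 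For $k=2$ we have $F\cap X=\varnothing$, again $F\cap E_i\ne E_i$, and the modularity bound $r(F\cap E_2)\leq r(E_2)-2$ together with $r(F\cap E_1)\geq r(E_1)-2$ leaves exactly the pairs $(r(E_1)-1,r(E_2)-3)$ and $(r(E_1)-2,r(E_2)-2)$, which are cases (6) and (7).

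For the converse direction I would simply check, in each of the seven cases, that the prescribed subset $F$ is a flat of $P_X(M_1,M_2)$ (immediate from the definition, since $F\cap E_i$ is a flat of $M_i$ for $i=1,2$) and that the rank formula \eqref{eq:rank-of-flats-in-generalized-parallel-connection} yields $r(F)=r(E_1)+r(E_2)-4$, matching the corank-2 condition via \eqref{eq:rank-of-generalized-parallel-connection}.

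The main obstacle, such as it is, lies in the $k=2$ case: one has to use modularity of $X$ \emph{in $M_2$} (which we have) to rule out having $r(F\cap E_2)$ too close to $r(E_2)$, and this is exactly where the asymmetric role of $M_1$ versus $M_2$ in the hypotheses enters the statement. The only mildly subtle point is noting that when $X\cong U_{2,n}$, the rank-$1$ flats of $M_2|X$ are precisely the singletons of $X$, so $r(F\cap X)=1$ is equivalent to $|F\cap X|=1$; once this identification is made, the rest is arithmetic.
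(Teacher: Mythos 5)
Your proposal is correct and follows essentially the same route as the paper: write $k = r(X) - r(F\cap X)$, combine the rank formulas to get $r(F\cap E_1) + r(F\cap E_2) = r(E_1)+r(E_2)-2-k$, and then do a case analysis on $k\in\{0,1,2\}$, invoking modularity of $X$ in $M_2$ to bound $r(F\cap E_2)$ from above. The only difference is that you flesh out some bookkeeping the paper leaves implicit (the lower bound $r(F\cap E_1)\geq r(E_1)-2$ and the observation that $F\cap E_i\neq E_i$ when $k\geq 1$), and you explicitly mention the converse direction, which the paper omits as routine.
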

\begin{proof}
    Let $r$ be the rank function of $P_X(M_1,M_2)$.
    First suppose that $F$ is a corank-2 flat of $P_T(M_1, M_2)$.
    Then $F \cap E_i$ is a flat of $M_i$ for $i = 1,2$.
    Let $r(F \cap X) = r(X) - k$ where $0 \le k \le 2$.
    Then we have 
    \begin{align*}
    r(F) &= r(E_1) + r(E_2) - r(X) - 2 \\
    &= r(F \cap E_1) + r(F \cap E_2) - r(F \cap X) \\
    &= r(F \cap E_1) + r(F \cap E_2) - (r(X) - k),
    \end{align*}
       where the first line follows from \eqref{eq:rank-of-generalized-parallel-connection} and the fact that $F$ is a corank-2 flat of $P_X(M_1, M_2)$, 
    and the second follows from \eqref{eq:rank-of-flats-in-generalized-parallel-connection}.
    It follows that 
    \renewcommand{\theequation}{\alph{equation}}
    \begin{equation} \label{eq:rank-of-generalized-parallel-connection-X-F}
    r(E_1) + r(E_2) - 2 = r(F \cap E_1) + r(F \cap E_2) + k.
    \end{equation}
    If $k = 0$ then $X \subseteq F$ because $F$ is a flat, and (\ref{eq:rank-of-generalized-parallel-connection-X-F}) implies that (1), (2), or (3) holds.
    If $k = 1$ then $|F \cap X| = 1$ because $M_i|X$ is simple, and $r(F \cap E_i) < r(M_i)$ for $i = 1,2$ because $F$ does not contain $X$.
    Then (\ref{eq:rank-of-generalized-parallel-connection-X-F}) implies that (4) or (5) holds.
    Finally, if $k = 2$ then $X \cap F  = \varnothing$.
    Since $X$ is modular flat in $M_2$, we know that 
    $$r(X) + r(F \cap E_2) = r(X \cup (F \cap E_2)),$$
    and since $r(X) = 2$ and $r(X \cup (F \cap E_2)) \le r(E_2)$
    it follows that $r(F \cap E_2) \le r(E_2) - 2$.
    Since $X \cap F = \varnothing$ we know that $r(F \cap E_1) \le r(E_1) - 1$, and now (\ref{eq:rank-of-generalized-parallel-connection-X}) implies that (6) or (7) holds.
    
    Conversely, suppose that one of (1)--(7) holds for $F$.
    Since $F \cap E_i$ is a flat of $M_i$ for $i = 1,2$, it follows from the definition of $P_T(M_1, M_2)$ that $F$ is a flat of $P_T(M_1, M_2)$, so it suffices to show that $r(F) = r(P_T(M_1, M_2)) - 2$.
    In each case it follows directly from (\ref{eq:rank-of-flats-in-generalized-parallel-connection}) that $r(F) = r(M_1) + r(M_2) - r(T) - 2$, and then (\ref{eq:rank-of-generalized-parallel-connection}) implies that $r(F) = r(P_T(M_1, M_2)) - 2$.
\end{proof}

We will also need to understand interactions between hyperplanes of a matroid.
Given a matroid $M$, a \emph{linear subclass} is a set $\mathscr H$ of hyperplanes of $M$ so that if $H, H' \in \mathscr H$ and $(H, H')$ is a modular pair, then every hyperplane containing $H \cap H'$ is also in $\mathscr H$.
The canonical example of a linear subclass is the set of hyperplanes containing a fixed flat.
The following proposition will be useful for inductive arguments involving hyperplanes that avoid a fixed linear subclass.

\begin{prop} \label{prop: linear subclasses}
Let $M$ be a matroid and let $\mathscr H$ be a linear subclass of $M$.
If $H$ and $K$ are distinct hyperplanes of $M$ with $H, K \notin \mathscr H$, then there is a hyperplane $L$ of $M$ so that $L \notin \mathscr H$, the pair $(H, L)$ is modular, $(H \cap K) \subseteq L$, and $r(L \cap K) > r(H \cap K)$.
\end{prop}
\begin{proof}
Let $F$ be a corank-2 flat of $M$ with $(H \cap K) \subseteq F \subseteq H$.
Let $\mathscr F$ be the set of hyperplanes of $M$ that contain $F$ and some element of $K - F$. 
If $|\mathscr F| = 1$, then $\cl_M(F \cup e) = \cl_M(F \cup e')$ for all $e,e' \in K - F$.
Then $K \subseteq \cl_M(F \cup e)$, so $(H, K)$ is a modular pair and the claim holds with $L = K$.
So we may assume that $|\mathscr F| \ge 2$.
If $\mathscr F \subseteq \mathscr H$, then $H \in \mathscr H$ because $\mathscr H$ is a linear subclass and all of the hyperplanes in $\mathscr F$ contain the corank-2 flat $F$.
So there is some $L \in \mathscr F - \mathscr H$.
Since $(H \cap K) \subseteq F \subseteq L$ we see that $(H, L)$ is a modular pair and $(H \cap K) \subseteq L$.
Since $L$ contains an element in $K - F$ we see that $r(L \cap K) > r(H \cap K)$, and the statement holds.
\end{proof}

\section{The foundation of a generalized parallel connection}
\label{sec:GPC}


The following theorem implies \autoref{thmA}~(1), and also proves the analogous result for universal pastures. Recall from \autoref{lemma: foundation map for restriction is independent on minor embedding} that the restriction of a matroid $M$ to a subset $T$ induces a (well-defined) morphism $F_{M|T}\to F_M$ of foundations.
We will write $\tilde F_T$ and $F_T$ for $\tilde F_{M|T}$ and $F_{M|T}$, respectively.

\begin{thm}\label{thm: foundation of the generalized parallel connection}
Let $M_1$ and $M_2$ be matroids with ground sets $E_1$ and $E_2$, respectively, with $E_1 \cap E_2 = T$ so that $M_1|T = M_2|T$ and $T$ is a modular flat of both $M_1$ and $M_2$, and let $M = P_T(M_1, M_2)$.
Then $\tilde F_M \cong \tilde{F}_{M_1}\otimes_{\tilde{F}_T}\tilde{F}_{M_2}$ and $F_M \cong F_{M_1}\otimes_{F_T}F_{M_2}$.
\end{thm}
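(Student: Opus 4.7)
The plan is to show, for every pasture $P$, a natural bijection
\[
 \cX^I_M(P) \ \longleftrightarrow \ \cX^I_{M_1}(P) \times_{\cX^I_T(P)} \cX^I_{M_2}(P)
\]
functorial in $P$, where the fiber product is taken over the restriction maps. Since $\tilde F(M)$, $\tilde F(M_i)$, and $\tilde F(T)$ represent these functors, the universal property of the relative tensor product of pastures (cf.~\cite[Lemma 2.7]{Baker-Lorscheid20}) together with the Yoneda lemma yields $\tilde F(M) \cong \tilde F(M_1) \otimes_{\tilde F(T)} \tilde F(M_2)$. The foundation statement follows from the parallel bijection on $\cX^R$; passage from isomorphism to rescaling classes is formal once one observes that a rescaling function $E \to P^\times$ is precisely a pair of rescaling functions $E_i \to P^\times$ agreeing on $T = E_1 \cap E_2$.

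In one direction, a modular system $\{f_H\}_{H\in\cH_M}$ on $M$ restricts as follows. For $H_1 \in \cH_{M_1}$ with $T \subseteq H_1$, the set $H_1 \cup E_2$ is a type-(1) hyperplane of $M$ by \autoref{prop:hyperplanes of the parallel connection}, and we set $f^1_{H_1} := f_{H_1 \cup E_2}|_{E_1}$. For $H_1 \in \cH_{M_1}$ with $T \not\subseteq H_1$, modularity of $T$ in $M_1$ implies that $H_1 \cap T$ is a hyperplane of $M|T$; modularity of $T$ in $M_2$ allows us to pick some $H_2 \in \cH_{M_2}$ with $H_2 \cap T = H_1 \cap T$, whence $H_1 \cup H_2$ is type (3) and we set $f^1_{H_1} := f_{H_1 \cup H_2}|_{E_1}$. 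One verifies independence of the choice of $H_2$ up to a scalar by invoking a modular triple in $M$ at a case-(3) corank-2 flat. Going the other way, given compatible systems $\{f^1_{H_1}\}$ and $\{f^2_{H_2}\}$ whose restrictions to $T$ agree, choose representatives inducing the \emph{same} system on $M|T$, and then construct $\{f_H\}$ on $M$ by the three prescriptions of \autoref{prop:hyperplanes of the parallel connection}: types (1) and (2) extend by zero on the complementary ground set, and type (3) concatenates $f^1_{H_1}$ on $E_1$ with $f^2_{H_2}$ on $E_2$, which is well-defined on $T$ by the compatibility.

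The bulk of the work is to verify that the glued system $\{f_H\}$ satisfies the modular triple condition on $M$. We case-split on the type of corank-2 flat $F$ using \autoref{prop: corank-2 flats of the parallel connection}. In case (1), hyperplanes of $M$ covering $F$ correspond bijectively to hyperplanes of $M_{3-i}$ covering $F \cap E_{3-i}$, so the modular dependence is inherited directly from $M_{3-i}$. Case (4) is analogous but uses modular triples in both $M_1$ and $M_2$ at $F \cap E_i$ simultaneously. Case (2) admits only two hyperplanes of $M$ containing $F$, so no modular triple condition applies there. Case (3) is the crux: hyperplanes of $M$ covering $F$ consist of type-(3) hyperplanes $(F \cap E_1) \cup H_2$ parametrized by hyperplanes $H_2$ of $M_2$ covering $F \cap E_2$ without containing $T$, together with one distinguished type-(2) hyperplane $E_1 \cup H'_2$; a modular triple in $M$ at $F$ arises from a modular triple in $M_2$ at the corank-2 flat $F \cap E_2$, and the required dependence on $M$ is obtained by combining that dependence on $E_2$ with the observation that the relevant $E_1$-contributions either vanish (for the type-(2) summand) or factor through the common function on $T$.

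The main obstacle is case (3) of the corank-2 classification, where one must track how coefficients rescale through the shared data on $T$ and verify that the type-(2) hyperplane $E_1 \cup H'_2$ is linearly consistent with the type-(3) ones when restricted to $E_1$ (where its function is identically zero) and to $E_2$ (where it picks up the common hyperplane function on $T$). A secondary obstacle is the well-definedness of the restriction map: showing that distinct extensions $H_2$ give proportional $f^1_{H_1}$ on $E_1$ requires its own invocation of a case-(3) modular triple. Once these are in hand, mutual inverseness of the two constructions, naturality in $P$, and the transition from $\tilde F$ to $F$ via the quotient by rescalings are formal.
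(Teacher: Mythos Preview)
Your proposal is correct and follows essentially the same approach as the paper: establish a functorial bijection between $\cX^I_M(P)$ and the fiber product $\cX^I_{M_1}(P)\times_{\cX^I_T(P)}\cX^I_{M_2}(P)$ via restriction and gluing of modular systems of hyperplane functions, then verify the modular-triple axiom for the glued system by the four-way case split of \autoref{prop: corank-2 flats of the parallel connection}. The paper handles the gluing by first normalizing so that any two functions in $\cH_1\cup\cH_2$ with the same support on $T$ agree on $T$ (your ``choose representatives inducing the same system on $M|T$''), and its treatment of cases (1)--(4) matches yours; in case (4) the paper makes explicit the step you summarize as ``simultaneously'', namely that the two coefficient triples obtained from the $M_1$- and $M_2$-dependences are proportional because evaluating at a suitable $t\in T$ forces $c/c'=d/d'$, etc. One point you flag that the paper glosses over is the well-definedness of the restriction map (independence, up to scalar, of the choice of $H_2$ when restricting a type-(3) hyperplane function to $E_1$); this is indeed needed and follows, as you say, from a case-(3) modular triple.
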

\begin{proof}
Let $P$ be a pasture. 
Let $\cX^I(M_1, M_2, T, P)$ (resp. $\cX^R(M_1, M_2, T, P)$) be the subset of $\cX^I_{M_1}(P) \times \cX^I_{M_2}(P)$ (resp. $\cX^R_{M_1}(P) \times \cX^R_{M_2}(P)$)
for which the induced representations of $M_1|T$ and $M_2|T$ are in the same isomorphism class (resp. rescaling equivalence class).
We will define a map $\Phi$ from $\cX^I(M_1, M_2, T, P)$ to $\cX^I_{M}(P)$ and a map $\Psi$ from $\cX^I_{M}(P)$ to $\cX^I(M_1, M_2, T, P)$. 
Then we will show that these maps are well-defined and inverse to each other.
It will be clear from the definition of the resulting bijection that it is functorial in $P$. Therefore, by the universal property of the tensor product, we will obtain an isomorphism $\tilde F_M \cong \tilde{F}_{M_1}\otimes_{\tilde{F}_T}\tilde{F}_{M_2}$. 
Passing to rescaling classes instead of isomorphism classes shows that $F_M \cong F_{M_1}\otimes_{F_T}F_{M_2}$ as well.

For $\Phi$, if we have a modular system $\cH$ of $P$-hyperplane functions of $M$, then $\cH|_{E_1}$ and $\cH|_{E_2}$ are modular systems of hyperplane functions for $M_1$ and $M_2$, respectively, whose induced representations of $M|T$ are clearly isomorphic.
For $\Psi$, let $\cH_i$ be a modular system of $P$-hyperplane functions of $M_i$ for $i =1,2$ so that $\cH_1|_T$ and $\cH_2|_T$ are isomorphic.
By Propositions \ref{prop: scaling of induced hyperplane functions} and \ref{prop:hyperplanes of the parallel connection}, we may assume, by scaling functions in $\cH_1$ and $\cH_2$, that if $f, f' \in \cH_1 \cup \cH_2$ have the same support in $T$, then $f(e) = f'(e)$ for all $e \in T$.
For each hyperplane $H$ of $M$ we define a function $f_H$ by declaring that if $H \cap E_i$ is a hyperplane for some $i=1,2$, then $f_H(e) = f_{H \cap E_i}(e)$ for all $e \in E_i$.
Let $\cH$ be the set of all $f_H$ for hyperplanes $H$ of $M$.
By \autoref{prop:hyperplanes of the parallel connection}, the complements of the supports of the functions in $\cH$ forms the set of hyperplanes of $M$. 
Clearly $\Phi$ and $\Psi$ are inverse to each other because restricting the functions in $\cH$ to $E_i$ for $i = 1,2$ results in the systems $\cH_1$ and $\cH_2$.
So it remains to show that $\cH$ is in fact a modular system. 

Let $F$ be a corank-2 flat of $M$ and let $(H, H', H'')$ be a modular triple of hyperplanes of $M$ such that $H \cap H' \cap H'' = F$.
We will show that $f_{H}, f_{H'}, f_{H''}$ are linearly dependent.
There are four different cases to consider, stemming from the four cases for $F$ in \autoref{prop: corank-2 flats of the parallel connection}.

\textbf{Case 1:} Suppose $T \subseteq F$ and there is some $i \in \{1,2\}$ so that $E_i \subseteq F$ and $F \cap E_{3-i}$ is a corank-2 flat of $M_{3-i}$.
We may assume that $i = 1$.
Then $(H \cap E_2, H' \cap E_2, H'' \cap E_2)$ is a modular triple of hyperplanes of $M_2$, and since $f_{H \cap E_2}, f_{H' \cap E_2}, f_{H'', \cap E_2}$ are linearly dependent in $\cH_2$ it follows that $f_{H}, f_{H'}, f_{H''}$ are linearly dependent in $\cH$.

\textbf{Case 2:} Suppose $T \subseteq F$ and $F \cap E_i$ is a hyperplane of $M_i$ for $i = 1,2$.
By \autoref{prop:hyperplanes of the parallel connection}, the only hyperplanes of $M$ containing $F$ are $F \cup E_1$ and $F \cup E_2$, so there is no modular triple of hyperplanes that all contain $F$.

\textbf{Case 3:} Suppose $r_{M_1}(F \cap T) = r_{M_1}(T) - 1$, and there is some $i \in \{1,2\}$ so that $F \cap E_i$ is a hyperplane of $M_i$ and $F \cap E_{3-i}$ is a corank-2 flat of $M_{3-i}$.
We may assume that $i = 1$.
By \autoref{prop:hyperplanes of the parallel connection} we see that $(H \cap E_2, H' \cap E_2, H'' \cap E_2)$ is a modular triple of hyperplanes of $M_2$, so there are constants $c,c',c''$ so that 
$$c \cdot f_{H \cap E_2}(e) + c'\cdot f_{H' \cap E_2}(e) + c''\cdot f_{H''\cap E_2}(e) = 0$$ for all $e \in E_2$.
If none of $H, H', H''$ contains $E_1$, then $c + c' + c'' = 0$ because $H \cap E_2, H' \cap E_2, H'' \cap E_2$ all have the same restriction to $T$.
Similarly, if $E_1 \subseteq H''$, then $c + c' = 0$.
In either case it follows that $c \cdot f_{H}(e) + c'\cdot f_{H'}(e) + c''\cdot f_{H''}(e) = 0$ for all $e \in E_1 \cup E_2$.

\textbf{Case 4:} Suppose $r_{M_1}(F \cap T) = r_{M_1}(T) - 2$ and $F \cap E_i$ is a corank-2 flat of $M_i$ for $i = 1,2$.
If outcome (1) or (2) of \autoref{prop:hyperplanes of the parallel connection} holds for $H$, then by (\ref{eq:rank-of-flats-in-generalized-parallel-connection}) we see that $r_M(H) \ge r_M(F) + 2$, a contradiction.
So outcome (3) of \autoref{prop:hyperplanes of the parallel connection} holds for $H$, $H'$, and $H''$, and since $F \cap E_i$ is a corank-2 flat of $M_i$ for $i = 1,2$, it follows that $(H \cap E_i, H' \cap E_i, H'' \cap E_i)$ is a modular triple of hyperplanes of $M_i$ for $i = 1,2$.
Then there are constants $c, c, c''$ so that 
$$c \cdot f_{H \cap E_1}(e) + c'\cdot f_{H' \cap E_1}(e) + c''\cdot f_{H''\cap E_1}(e) = 0$$ for all $e \in E_1$,
and constants $d, d', d''$ so that 
$$d \cdot f_{H \cap E_2}(e) + d'\cdot f_{H' \cap E_2}(e) + d''\cdot f_{H''\cap E_2}(e) = 0$$ for all $e \in E_2$.
Since outcome (3) of \autoref{prop:hyperplanes of the parallel connection} holds for $H$, $H'$, and $H''$, we know that $r_M(H \cap T) = r_M(H' \cap T) = r_M(H'' \cap T) = r_M(T) - 1$.
Since $F$ and $H$ do not agree on $T$, there is an element $t \in (H \cap T) - F$ so that $\cl_M(F \cup t) = H$.
Then $t \notin H' \cup H''$, or else $H = H'$ or $H = H''$.
By setting $e = t$, the first equation shows that $\frac{c}{c'} = -\frac{f_{H' \cap E_1}(t)}{f_{H \cap E_1}(t)}$, and the second equation shows that $\frac{d}{d'} = -\frac{f_{H' \cap E_2}(t)}{f_{H \cap E_2}(t)}$.
It follows that $\frac{c}{c'} = \frac{d}{d'}$.
Repeating this argument with an element $t' \in (H' \cap T) - (H \cup H'')$ and an element $t'' \in (H'' \cap T) - (H \cup H')$ shows that $(c, c', c'')$ is a scalar multiple of $(d, d', d'')$, and it follows that 
$c \cdot f_{H}(e) + c'\cdot f_{H'}(e) + c''\cdot f_{H''}(e) = 0$ for all $e \in E_1 \cup E_2$.

The four cases combine to show that $\cH$ is a modular system of $P$-hyperplane functions for $M$, as desired.
So we have defined maps from $\cX^I(M_1, M_2, T, P)$ to $\cX^I_{M}(P)$ and vice versa that are inverse to each other and functorial in $P$, which shows that $\tilde F_M \cong \tilde{F}_{M_1}\otimes_{\tilde{F}_T}\tilde{F}_{M_2}$.
Since these maps induce maps from $\cX^R(M_1, M_2, T, P)$ to $\cX^R_{M}(P)$ and vice versa that are also inverse to each other and functorial in $P$, it follows that $F_M \cong F_{M_1}\otimes_{F_T}F_{M_2}$ as well.
\end{proof}

\begin{rem}
When $T$ is only a modular flat in $M_2$, the generalized parallel connection $P_T(M_1, M_2)$ is still well-defined.
However, the identity $F_{P_T(M_1, M_2)}\cong F_{M_1} \otimes_{F_T} F_{M_2}$ does not always hold in this more general setting, even when $r(T) = 2$.
For example, let $M_1$ and $M_2$ be the rank-$3$ matroids spanned by the two planes of the matroid shown in \autoref{fig: tensor product counterexample}, and let $T$ be the line spanned by the intersection of these two planes.
Then $T$ is a modular flat of $M_2$, so $M = P_T(M_1, M_2)$ is well-defined.
However, one can check,
using the Macaulay2 package developed by Chen and Zhang (cf.~\cite{Chen-Zhang})\footnote{The software described in \cite{Chen-Zhang} is now available through the standard distribution of Macaulay2 as the package ``foundations.m2''.},
that $F_M \ncong F_{M_1} \otimes_{F_T} F_{M_2}$.
Specifically, $F_{M_1} \otimes_{F_{T}} F_{M_2}$ has $30$ hexagons (in the sense of \cite[Figure 4.1]{Baker-Lorscheid25}) while $F_M$ has  $31$ hexagons.

We briefly explain how this extra hexagon in $F_M$ arises from the fact that $T$ is not a modular flat of $M_1$.
Let $H = E(M_1) - T$ and let $\{a,b,c,d\} = E(M_2) - T$.
Then $H \cup a$, $H \cup b$, $H \cup c$, and $H \cup d$ are all hyperplanes of $M$ that are not of the form described in \autoref{prop:hyperplanes of the parallel connection}.
Moreover, $(H \cup a, H \cup b, H \cup c, H \cup d)$ is a modular quadruple of hyperplanes of $M$, which corresponds to a hexagon of $F_M$ (see \cite[Definitions 3.3 and 3.4]{Baker-Lorscheid25}).
The pasture obtained from $F_M$ by deleting this hexagon is isomorphic to $F_{M_1} \otimes_{F_T} F_{M_2}$ (as verified via Macaulay2), so the discrepancy between $F_M$ and $F_{M_1} \otimes_{F_T} F_{M_2}$ arises directly from the fact that $T$ is not a modular flat of $M_1$.
\end{rem}

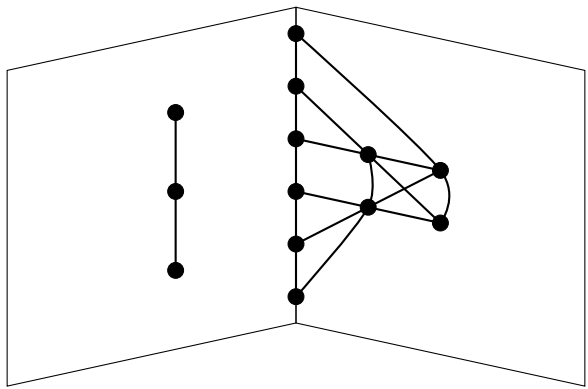
\begin{figure}[htb] 
\[
 \beginpgfgraphicnamed{tikz/fig49}
 \begin{tikzpicture}[x=0.08cm,y=0.07cm]
  \draw ( 0,-5) -- ( 0,55) -- (48,43) -- (48,-17) --cycle;
  \draw ( 0,-5) -- ( 0,55) -- (-48,43) -- (-48,-17) --cycle;
  \draw[thick] ( 0, 0) -- ( 0,50);
  \draw[thick] ( 0, 20) -- ( 24,14);
  \draw[thick] ( 0, 30) -- ( 24,24);
  \draw[thick] ( -20, 5) -- ( -20,35);
  \draw[thick] ( 24, 14) -- ( 0,40);
  \draw[thick] ( 24, 24) -- ( 0,10);
  \draw[thick] plot [smooth] coordinates { (24,14) ( 24,24) (0,50) };
  \draw[thick] plot [smooth] coordinates { (12,27) ( 12,17) (0,0) };
  \filldraw ( 0, 0) circle (3pt);
  \filldraw ( 0,10) circle (3pt);
  \filldraw ( 0,20) circle (3pt);
  \filldraw ( 0,30) circle (3pt);
  \filldraw ( 0,40) circle (3pt);
  \filldraw ( 0,50) circle (3pt);
  \filldraw (12,27) circle (3pt);
  \filldraw (24,24) circle (3pt);
  \filldraw (12,17) circle (3pt);
  \filldraw (24,14) circle (3pt);
  \filldraw (-20, 5) circle (3pt);
  \filldraw (-20,20) circle (3pt);
  \filldraw (-20,35) circle (3pt);
 \end{tikzpicture}
 \endpgfgraphicnamed
 \]
 \caption{A generalized parallel connection for which the foundation of $P_T(M_1, M_2)$ is not isomorphic to $F_{M_1} \otimes_{F_T} F_{M_2}$.} 
 \label{fig: tensor product counterexample}
\end{figure}

\section{The foundation of a 2-sum} \label{sec:foundation2sum}

In this section, we study the special case in which $T = \{p\}$ is a singleton that is not a loop or a coloop in either $M_1$ or $M_2$. 
In this case, the \emph{2-sum} of $M_1$ and $M_2$ with basepoint $p$ is the matroid with ground set $(E(M_1) \cup E(M_2)) - p$ and set of circuits
\[
\cC(M_1\setminus p) \cup \cC(M_2\setminus p) \cup \{(C_1\cup C_2) -  p \mid p\in C_1 \in \cC(M_1) \text{ and } p \in C_2\in \cC(M_2)\},
\]
where $\cC(N)$ denotes the set of circuits of the matroid $N$.
The 2-sum of $M_1$ and $M_2$ with basepoint $p$ is denoted by $M_1\oplus_2 M_2$ or $M_1\oplus_p M_2$.
When $\{p\}$ is a flat of $M_1$ or $M_2$, we can also define $M_1\oplus_p M_2$ to be $P_{p}(M_1, M_2) \del p$, where $P_{p}(M_1, M_2)$ is the parallel connection of $M_1$ and $M_2$ along $p$ \cite[Proposition 7.1.20]{Oxley06}.

We seek to prove \autoref{thmB}, which states that $F_{M_1 \oplus_p M_2}\cong F_{M_1}\otimes F_{M_2}$, where we use that $F_p=\Funpm$, as noted in \autoref{ex: foundations of regular matroids}.
We know from \autoref{thm: foundation of the generalized parallel connection} that $F_{P_p(M_1, M_2)} \cong F_{M_1}\otimes F_{M_2}$, so it suffices to show that $F_{M_1 \oplus_p M_2}\cong F_{P_p(M_1, M_2)}$.
We first show that the sets of hyperplanes of $M_1 \oplus_p M_2$ and $P_p(M_1, M_2)$ are closely related.

\begin{lemma} \label{lem: hyperplanes of M'}
Let $M_1$ and $M_2$ be matroids on $E_1$ and $E_2$, respectively, so that $E_1 \cap E_2 = \{p\}$ where $p$ is not a loop or a coloop of $M_1$ or $M_2$, and $\{p\}$ is a flat of $M_1$ or $M_2$.
Let $M = P_{p}(M_1, M_2)$ and $M' = M_1 \oplus_p M_2$, and let $\mathscr H$ and $\mathscr H'$ be the sets of hyperplanes of $M$
and $M'$ respectively. 
Then
\begin{enumerate}
    \item $\mathscr H' = \{H-p\mid H\in \cH\}$, 

    \item if $(H_1, H_2, H_3)$ is a modular triple of hyperplanes of $M$, then $(H_1 - p, H_2 - p, H_3 - p)$ is a modular triple of hyperplanes of $M'$, and

    \item conversely, if $(H_1', H_2', H_3')$ is a modular triple of hyperplanes of $M'$, then $$(\cl_M(H_1'), \cl_M(H_2'), \cl_M(H_3'))$$ is a modular triple of hyperplanes of $M$.
\end{enumerate}
\end{lemma}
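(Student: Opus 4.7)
The plan is to handle the three parts in sequence. For part~(1), I will first verify that $p$ is not a coloop of $M$: since $p$ is not a coloop of $M_1$, there is a circuit of $M_1$ through $p$, and this remains a circuit in $M$ because $M_1$ is a restriction of $M$. It then follows that $r(M')=r(M)$, and the standard identification $\cl_{M'}(S)=\cl_M(S)-p$ for $S\subseteq E-p$ gives $\cH'=\{H-p : H\in\cH\}$; distinct $H\in\cH$ produce distinct images because otherwise $p$ would lie in the closure of a proper flat not already containing $p$.

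Part~(2) is then essentially a direct check: given a modular triple $(H_1,H_2,H_3)$ in $M$ with corank-$2$ intersection $F$, both the identity $(H_i-p)\cap(H_j-p)=F-p$ and the distinctness of the $H_i-p$ are immediate. The only substantive point is to show $r_M(F-p)=r(M)-2$, which reduces to showing $p$ is not a coloop of $M|F$ when $p\in F$. Using \autoref{prop: corank-2 flats of the parallel connection} with $T=\{p\}$: case~(4) is vacuous, and case~(2) admits only two hyperplanes of $M$ above $F$ and so supports no modular triple with intersection~$F$. Hence $F$ falls into case~(1), meaning $E_i\subseteq F$ for some~$i$, and any circuit of $M_i$ through $p$ then lies inside $F$, certifying that $p$ is not a coloop of $M|F$.

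For part~(3), given a modular triple $(H'_1,H'_2,H'_3)$ in $M'$ with corank-$2$ intersection $F'$, I set $\tilde H_i:=\cl_M(H'_i)$, a hyperplane of $M$ satisfying $\tilde H_i-p=H'_i$ by part~(1), and let $\tilde F:=\bigcap_i\tilde H_i$. Since $\tilde F-p=F'$ and $\tilde F$ is strictly contained in each hyperplane $\tilde H_i$, one checks that $\tilde F$ has corank exactly~$2$ in $M$. The pairwise identities $\tilde H_i\cap\tilde H_j=\tilde F$ hold off $p$ by modularity in $M'$, and hold at $p$ if and only if the number of $\tilde H_i$ that contain $p$ is $0$, $1$, or $3$. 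The main obstacle is thus to exclude the scenario in which exactly two of the $\tilde H_i$ contain $p$.

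To rule this out, I would classify each $\tilde H_i$ according to the types~(1)--(3) of \autoref{prop:hyperplanes of the parallel connection}, noting that $p\in\tilde H_i$ iff $\tilde H_i$ is of type~(1) or (2), equivalently $H'_i$ contains $E_2-p$ or $E_1-p$. Suppose two of the $\tilde H_i$ contain $p$ but the third, $\tilde H_k$, is of type~(3). If the two containing $p$ are both of type~(1) (resp.\ both of type~(2)), then the modularity identity $F'=H'_i\cap H'_j$ forces $F'\supseteq E_2-p$ (resp.\ $E_1-p$); but this set spans $M_2$ (resp.\ $M_1$) since $p$ is not a coloop, which cannot fit inside the type-(3) hyperplane $H'_k$. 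If instead one is of type~(1) and the other of type~(2), a direct computation gives $F'=(H^{(1)}-p)\cup(H^{(2)}-p)$ for hyperplanes $H^{(1)}\subseteq E_1$, $H^{(2)}\subseteq E_2$ through~$p$; the containment $F'\subseteq H'_k$ then forces $H^{(i)}-p$ to sit inside a second hyperplane of $M_i$, implying $p$ is a coloop of $M_i|H^{(i)}$ for both~$i$, and submodularity in $M$ yields $r_{M'}(F')\le r(M_1)+r(M_2)-4<r(M)-2$, contradicting the corank-$2$ hypothesis on $F'$.
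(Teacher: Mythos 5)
Your proof of part~(1) has a gap. The closure identity $\cl_{M'}(S)=\cl_M(S)-p$ (valid since $p$ is not a coloop of $M$) does show that every flat of $M'$ is of the form $F-p$ for some flat $F$ of $M$, and your injectivity argument is fine, but it does \emph{not} show that $H-p$ is a hyperplane of $M'$ for every hyperplane $H$ of $M$. The issue is rank: if $p\in H$, then $H-p$ is a flat of $M'$, but $r_{M'}(H-p)=r(M)-1$ only if $p\in\cl_M(H-p)$, and this can fail for a general matroid and non-coloop $p$ (for instance, in $M(K_4)$ with $p$ any edge and $H$ the two-element hyperplane through $p$, $H-p$ is a corank-$2$ flat of $M\del p$, not a hyperplane). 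To close this you must use the parallel-connection structure, exactly as you do in part~(2): by \autoref{prop:hyperplanes of the parallel connection}, a hyperplane $H$ of $M$ containing $p$ must contain $E_1$ or $E_2$, and since $p$ is not a coloop of the corresponding $M_i$ one gets $p\in\cl_{M_i}(E_i-p)\subseteq\cl_M(H-p)$. Without an argument of this kind, the equality $\cH'=\{H-p\mid H\in\cH\}$ is unjustified, and part~(1) is where the non-coloop hypothesis on $M_1,M_2$ (not merely on $M$) is actually needed.

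Parts~(2) and~(3) are correct. In part~(2) you locate $E_i$ inside $F$ via \autoref{prop: corank-2 flats of the parallel connection}, a natural variant of the paper's argument, which instead observes directly that a modular triple cannot have one hyperplane containing $E_1$ and another containing $E_2$. In part~(3) your case analysis ruling out ``exactly two $\tilde H_i$ contain $p$'' is valid but unnecessary: once you have established that $\tilde F$ is a corank-$2$ flat and that the $\tilde H_i$ are distinct hyperplanes, the pairwise identities $\tilde H_i\cap\tilde H_j=\tilde F$ follow automatically, since any flat strictly between $\tilde F$ and $\tilde H_i$ would have rank $r(M)-1$ and therefore coincide with $\tilde H_i$, forcing $\tilde H_i=\tilde H_j$. (In particular, the ``exactly two'' scenario is already incompatible with $\tilde F$ having corank~$2$.) The paper's terse chain of rank equalities is this observation in compressed form.
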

\begin{proof}
We first prove (1).
Since $M$ is obtained from $M'$ by deleting $p$, it follows that $\mathscr H' \subseteq \{H-p\mid H\in \mathscr H\}$, so we need only show that the reverse containment holds as well. 
If $p\notin H$ then clearly $H-p\in \mathscr H'$. If $p\in H$ then $E_i \subseteq H$ for some $i\in \{1,2\}$ by \autoref{prop:hyperplanes of the parallel connection}. Since $p$ is not a coloop of $M_i$, it follows that $H$ and $H-p$ have the same rank in $M$, and so $H-p\in \mathscr H'$.

We next prove (2).
Suppose $(H_1,H_2, H_3)$ is a modular triple of hyperplanes of $M$.
Let $L = H_1 \cap H_2 \cap H_3$.
It suffices to show that if $p \in L$, then $r_M(L - p) = r_M(L)$.
If $p\in L$, then by \autoref{prop:hyperplanes of the parallel connection}, each of $H_1$, $H_2$, and $H_3$ contains $E_1$ or $E_2$. 
If $E_1 \in H_1$ and $E_2 \in H_2$ then $(H_1, H_2, H_3)$ is not a modular triple, so without loss of generality we may assume $E_1\subseteq L$.
Since $p$ is not a coloop of $M_1$, it follows that $r_M(L - p) = r_M(L)$, as desired.

Finally, we prove (3).
Suppose $(H_1', H_2', H_3')$ is a modular triple of hyperplanes of $M'$, and let $L' = H_1' \cap H_2' \cap H_3'$.
Then $$r(M) - 2 = r(M') - 2 = r_{M'}(L') = r_M(\cl_M(L')) = r_M(\cl_M(H_1') \cap \cl_M(H_2') \cap \cl_M(H_3')),$$
which shows that $(\cl_M(H_1'), \cl_M(H_2'), \cl_M(H_3'))$ is a modular triple of hyperplanes of $M$.
\end{proof}

The following is a restatement of \autoref{thmB}.

\begin{thm}\label{thm: foundations of 2-sums}
Let $M_1$ and $M_2$ be matroids on $E_1$ and $E_2$, respectively, so that $E_1 \cap E_2 = \{p\}$ and $p$ is not a loop or a coloop of $M_1$ or $M_2$.
Then $F_{M_1 \oplus_p M_2} \cong F_{M_1}\otimes F_{M_2}$.
\end{thm}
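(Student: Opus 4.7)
The plan is to reduce, via \autoref{thm: foundation of the generalized parallel connection}, to proving $F(M) \cong F(M')$, where $M := P_p(M_1, M_2)$ and $M' := M \setminus p$. Indeed, the common restriction on $T = \{p\}$ is $U_{1,1}$, which has a unique hyperplane ($\varnothing$) and no modular triples, so its foundation is the initial pasture $\Funpm$; consequently the relative tensor product coincides with the absolute one, $F(M) \cong F(M_1) \otimes_{F(\{p\})} F(M_2) \cong F(M_1) \otimes F(M_2)$.

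To establish $F(M) \cong F(M')$, I would construct, for every pasture $P$, a bijection $\cX^R_M(P) \leftrightarrow \cX^R_{M'}(P)$ that is functorial in $P$. The \emph{restriction map} sends a modular system $\{f_H\}_{H \in \cH(M)}$ to $\{f_H|_{E(M')}\}_{H' \in \cH(M')}$, where $H' = H - p$: by \autoref{lem: hyperplanes of M'}(1) the index assignment is bijective, and by part~(2), modular triples of $M$ descend to modular triples of $M'$ with the same linear dependencies, so the restricted family is a modular system for $M'$. Functoriality in $P$ and compatibility with rescaling are immediate.

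For the inverse \emph{extension map}, I would set $f_H(e) := f'_{H - p}(e)$ for $e \in E(M')$, $f_H(p) := 0$ whenever $p \in H$, and $f_H(p) := c_H \in P^\times$ for each type-3 hyperplane $H$ of $M$ (those with $p \notin H$). Specializing \autoref{prop: corank-2 flats of the parallel connection} to $r(T) = 1$ and combining with \autoref{prop:hyperplanes of the parallel connection}, every modular triple of $M$ falls into one of three patterns: (a)~all three hyperplanes contain $p$ (no constraint at $p$, since the $p$-values all vanish); (b)~exactly one contains $p$ and two are type-3, yielding a two-term relation $\alpha c_H + \gamma c_{H'} = 0$; or (c)~all three are type-3, yielding a three-term relation among $c_{H_1}, c_{H_2}, c_{H_3}$. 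By \autoref{lem: hyperplanes of M'}(3), the dependency coefficients of any modular triple of $M$ agree up to a common scalar with those of the corresponding modular triple of $M'$; hence each type-(b) ratio $c_H / c_{H'}$ is prescribed by the given $M'$-representation. Normalizing a reference $c_{H_0} = 1$ and propagating these ratios along chains of type-(b) triples determines all remaining $c_H$'s.

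The main obstacle will be verifying consistency: that the propagation is path-independent and that the resulting $c_H$'s automatically satisfy every type-(c) three-term relation. I expect to resolve both issues by the following observation. For a Case~3 corank-2 flat $F$ of $M$ with $F \cap E_1$ a hyperplane of $M_1$ and $F \cap E_2$ a corank-2 flat of $M_2$, the hyperplanes of $M$ through $F$ are in natural bijection with the hyperplanes of $M_2$ through $F \cap E_2$ (exactly one type-2 hyperplane of $M$, corresponding to the unique $M_2$-hyperplane through $F \cap E_2$ containing $p$, and type-3 hyperplanes of $M$ corresponding to the other $M_2$-hyperplanes through $F \cap E_2$); the constraints at $p$ among the $c_H$'s then translate precisely into the modular-system axioms for an induced family of $M_2$-hyperplane functions on the rank-2 matroid $M_2 / (F \cap E_2)$. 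Since this induced family is extracted directly from the $M'$-modular system and the modular relations of $M_2 / (F \cap E_2)$ are encoded by the $M'$-modular system via \autoref{lem: hyperplanes of M'}, the required consistency holds; a symmetric argument exchanges the roles of $M_1$ and $M_2$ for the dual Case~3. Once consistency is in place, the extension and restriction maps are manifestly mutually inverse and natural in $P$, giving $F(M) \cong F(M')$ and, combined with the opening reduction, $F(M_1 \oplus_p M_2) \cong F(M_1) \otimes F(M_2)$.
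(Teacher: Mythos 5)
Your approach is essentially the paper's: reduce via \autoref{thm: foundation of the generalized parallel connection} (noting $F(\{p\}) \cong \Funpm$, so the relative tensor product is absolute) to proving $F(P_p(M_1,M_2)) \cong F(M')$ where $M' = P_p(M_1,M_2)\del p$, and then construct mutually inverse restriction and extension maps between modular systems of hyperplane functions.

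Two minor points first. The identity $M_1\oplus_p M_2 = P_p(M_1,M_2)\del p$ requires $M_1,M_2$ simple, so one needs the preliminary reduction via simplification used in the paper (combining \cite[7.1.15(v)]{Oxley06} with \cite[Corollary 4.9]{Baker-Lorscheid20}). Also, for well-definedness of the restriction map you should invoke part~(3) of \autoref{lem: hyperplanes of M'}, not part~(2): what must be checked is that every modular triple of $M'$ lifts to a modular triple of $M$; part~(2) gives the converse implication.

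The substantive gap is the consistency argument for the extension, which you rightly identify as the main obstacle but do not actually close. Your observation that, at a single Case-3 corank-2 flat $F$, the constraints at $p$ reproduce the modular-system axiom for $M_2/(F\cap E_2)$ is a sensible local check, but it does not resolve \emph{cross-flat} consistency: a fixed type-3 hyperplane $H$ lies above many such $F$, and the constraints coming from distinct flats must all determine the same $c_H$. Appealing to the fact that the induced family is ``extracted from the $M'$-modular system'' begs the question, since the $c_H$'s are precisely the data that $\cH'$ does \emph{not} carry. The paper resolves this by first defining $c_H$ explicitly in terms of a fixed reference hyperplane $H_0$ (the unique $c$ such that a scalar multiple of $f_H|_{E_1'} * (c\cdot f_{H_0}|_{E_2'})$ lies in $\cH'$), and then proving a genuinely technical propagation statement, Claim~\ref{claim: 2-sum rescaling}, from which the symmetry of this definition in the two sides, Claim~\ref{claim: 2-sum symmetry}, is deduced. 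That is exactly the work hidden in your phrase ``a symmetric argument exchanges the roles of $M_1$ and $M_2$''; without an analogue of those two claims, the path-independence of your propagation along type-(b) chains, and hence the well-definedness of $\Psi$, is not established.
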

\begin{proof}
We will write $\si(M)$ for the simplification of a matroid $M$.
We will reduce to the case in which $M_1$ and $M_2$ are simple.
First suppose that $p$ is a coloop of $\si(M_i)$ for some $i \in \{1,2\}$; we may assume that $i = 1$.
Since $p$ is not a coloop of $M_1$, it is in a nontrivial parallel class of $M_1$.
By \cite[Proposition 7.1.15~(v)]{Oxley06} and \cite[Corollary 4.10]{Baker-Lorscheid25} we may assume that this parallel class is $\{p, p'\}$ for some $p' \in E_1$.
Then $p'$ is a coloop of $M \del p$, so $\cC(M_1 \del p) = \cC(M_1\del \{p, p'\})$, and $\{p, p'\}$ is the unique circuit of $M_1$ that contains $p$.
Let $\hat M_2$ be the matroid obtained from $M_2$ by adding $p'$ in parallel to $p$ and then deleting $p$.
Then $E(\hat M_2) = (E_2 - p) \cup p'$, and clearly $\hat M_2 \cong M_2$ and $\cC(M_2 \del p) = \cC(\hat M_2 \del p')$.
Since $\{p, p'\}$ is the unique circuit of $M_1$ that contains $p$, we see that 
\begin{align*}
    \{(C_1\cup C_2) -  p \mid p\in C_1 \in \cC(M_1) \text{ and } p \in C_2\in \cC(M_2)\}
\end{align*}
is equal to $\{C_2 \in \cC(\hat M_2) \mid p' \in C_2\}$, because the only choice for $C_1$ is $\{p, p'\}$.
Since $\cC(M_1 \del p) = \cC(M_1\del \{p, p'\})$ and $\cC(M_2 \del p) = \cC(\hat M_2 \del p)$, it follows that 
\begin{align*}
    \cC(M_1 \oplus_p M_2) &= \cC(M_1\del \{p, p'\}) \cup \cC(\hat M_2 \del p) \cup \{C_2 \in \cC(\hat M_2) \mid p' \in C_2\} \\
    &= \cC(M_1\del \{p, p'\}) \cup \cC(\hat M_2) \\
    & = \cC(M_1 \del \{p, p'\} \oplus \hat M_2),
\end{align*}
where the last equality is due to \cite[4.2.12]{Oxley06}.
Therefore $M_1 \oplus_p M_2 \cong M_1 \del \{p, p'\} \oplus \hat M_2$.
Since $F_{M_1 \del \{p, p'\}} \cong F_{M_1}$ by \cite[Corollary 4.10]{Baker-Lorscheid25} and $F_{\hat M_2} \cong F_{M_2}$ because $\hat M_2 \cong M_2$, it follows from Corollary \ref{corA} that $F_{M_1 \oplus_p M_2} \cong F_{M_1} \otimes F_{M_2}$.
So we may assume that $p$ is not a coloop of $\si(M_1)$ or $\si(M_2)$.
Then it follows from \cite[Proposition 7.1.15~(v)]{Oxley06} that $\si(M_1 \oplus_p M_1) = \si(M_1) \oplus_p \si(M_2)$, so by \cite[Corollary 4.10]{Baker-Lorscheid25} we may assume that $M_1$ and $M_2$ are simple.

Let $E = E_1 \cup E_2$, let $E' = E - p$, and let $E_i' = E_i - p$ for $i = 1,2$.
Let $P$ be a pasture.
Given functions $f_i \colon E_i' \to P$ for $i = 1,2$, we define $f_1 * f_2$ to be the function from $E'$ to $P$ so that $(f_1 * f_2)(e) = f_i(e)$ when $e \in E_i'$.
Using modular systems of hyperplane functions, we first define a map $\Phi$ from $\cX^R_{M}(P)$ to $\cX^R_{M'}(P)$ and a map $\Psi$ from $\cX^R_{M'}(P)$ to $\cX^R_{M}(P)$. Then we will show that these two maps are well-defined and inverse to each other.
The maps will be functorial in $P$ by construction, and so we will obtain an isomorphism $F_{M_1 \oplus_p M_2} \cong F_{M_1}\otimes F_{M_2}$.

Let $\cH$ be a modular system of $P$-hyperplane functions of $M$.
We define $\Phi(\cH)= \cH|_{E'}$. 
Now let $\cH'$ be a modular system of $P$-hyperplane functions of $M'$.
We define $\Psi$ by extending the functions in $\cH'$ to $p$.
If $f_{H}$ is in $\cH'$ and $H$ contains $E_1'$ or $E_2'$, then $H\cup p$ is a hyperplane of $M$, so we define $f_{H \cup p}(p)=0$.
Otherwise, $H$ is also a hyperplane of $M$ by \autoref{lem: hyperplanes of M'}, and we will extend $f_H$ to $p$ with the help of a fixed hyperplane $H_0$ of $M'$ that does not contain $E_1'$ or $E_2'$.
(To see that $H_0$ exists, for each $i \in \{1,2\}$, let $H_i$ be a hyperplane of $M_i$ that does not contain $p$.
Then $H_1 \cup H_2$ is a hyperplane of $M$ by \autoref{prop:hyperplanes of the parallel connection}, and therefore $H_1 \cup H_2$ is also a hyperplane of $M'$ by \autoref{lem: hyperplanes of M'}~(1).
Let $H_0 = H_1 \cup H_2$.)
Our definition of $\Psi$ will rely on the following observations, which we will use freely throughout the remainder of the proof:
\begin{itemize}
    \item If $H$ and $K$ are hyperplanes of $M'$ so that $H \cap E_i = K \cap E_i$ for some $i \in \{1,2\}$ and $f_H, f_K \in \cH'$, then $f_H|_{E_i}$ and $f_K|_{E_i}$ are scalar multiples of each other.

    \item If $H$ is a hyperplane of $M'$ that does not contain $E_1'$ or $E_2'$, then $(H \cap E_1') \cup (H_0 \cap E_2')$ is a hyperplane of $M'$.
\end{itemize}
The first follows from \autoref{prop: scaling of induced hyperplane functions}, and the second follows from \autoref{prop:hyperplanes of the parallel connection} and \autoref{lem: hyperplanes of M'}.
From these two observations, if $H$ is a hyperplane of $M'$ that does not contain $E_1'$ or $E_2'$, then $f_{(H \cap E_1') \cup (H_0 \cap E_2')}|_{E_1'}$ is a scalar multiple of $f_H|_{E_1'}$ and $f_{(H \cap E_1') \cup (H_0 \cap E_2')}|_{E_2'}$ is a scalar multiple of $f_{H_0}|_{E_2'}$, and it follows that 
there is a unique $c\in P^\times$ such that $f_{(H \cap E_1') \cup (H_0 \cap E_2')}$ is a scalar multiple of $f_{H}|_{E_1'} * (c \cdot f_{H_0}|_{E_2'})$.
We define $f_{H}(p) = c$, which completes the definition of $\Psi(\cH')$.
Before proving that $\Psi(\cH')$ is a modular system of $P$-hyperplane functions for $M'$, we will show that this definition is symmetric in $E_1'$ and $E_2'$.
To do so, we first prove the following technical claim.

\begin{claim} \label{claim: 2-sum rescaling}
Let $K$ and $L$ be hyperplanes of $M'$ so that neither contains $E_1'$ or $E_2'$ and $K \cap E_2' = L \cap E_2'$.
Let $K'$ and $L'$ be hyperplanes of $M'$ so that $K' \cap E_1' = K \cap E_1'$ and $L' \cap E_1' = L \cap E_1'$, and $K' \cap E_2' = L' \cap E_2'$.
Let $g_K$ and $g_L$ be scalar multiples of $f_K, f_L \in \cH'$, respectively, so that $g_{K}|_{E_2'} = g_{L}|_{E_2'}$. 
Then, for any scalar multiples $g_{K'}$ and $g_{L'}$ of $f_{K'},f_{L'} \in \cH'$, respectively, with $g_{K'}|_{E_1'} = g_{K}|_{E_1'}$ and $g_{L'}|_{E_1'} = g_{L}|_{E_1'}$, we have $g_{K'}|_{E_2'} = g_{L'}|_{E_2'}$.
\end{claim}
\begin{proof}
    Fix $L$, and suppose that the claim is false for $L$.
    Choose $K$ so that the claim is false for $L$ and $K$, and $r_{M'}(K \cap L)$ is maximal with this property.
    Since $K \cap E_2' = L \cap E_2'$, this is equivalent to the maximality of $r_{M'}(K \cap L \cap E_1')$.
    Assume we are given $K'$, $L'$, $g_{K'}$, and $g_{L'}$.
    If $K \cap E_1' = L \cap E_1'$, then $K = L = K' = L'$ and the result holds.
    So $K \cap E_1' \ne L \cap E_1'$.
    It follows from \autoref{lem: hyperplanes of M'} that $K$ and $L$ are also hyperplanes of $M$.
    Let $\mathscr P$ be the linear subclass of hyperplanes of $M$ that contain $p$.
    By \autoref{prop: linear subclasses} with $(H, K, \mathscr H) = (K, L, \mathscr P)$, there is a hyperplane $H$ of $M$ (possibly $H = L$) so that 
    $(K, H)$ is a modular pair, $p \notin H$, $r_{M}(H \cap L) > r_M(K \cap L)$, and $(K \cap L) \subseteq H$.
    Since $p \notin H$, \autoref{lem: hyperplanes of M'} implies that $H$ is also a hyperplane of $M'$.
    Then since $p \notin H$ and $K \cap L$ contains $L \cap E_2'$ which is a hyperplane of $M_2$, we see that $H \cap E_2' = L \cap E_2' = K \cap E_2'$.

    Let $g_H$ be the scalar multiple of $f_H$ so that $g_H|_{E_2'} = g_K|_{E_2'} = g_L|_{E_2'}$.
    Define $H'$ to be the hyperplane of $M'$ with $H' \cap E_1' = H \cap E_1'$ and $H' \cap E_2' = K' \cap E_2' = L' \cap E_2'$.
    Let $g_{H'}$ be the scalar multiple of $f_{H'}$ so that $g_{H'}|_{E_1'} = g_{H}|_{E_1'}$.
    Since $r_{M'}(H \cap L) > r_{M'}(K \cap L)$, by the maximality of $r_{M'}(K \cap L)$ we know that the claim is true for $H$ and $L$, and so $g_{H'}|_{E_2'} = g_{L'}|_{E_2'}$.
    We will complete the proof by showing that $g_{K'}|_{E_2'} = g_{H'}|_{E_2}$.
    Let $X_1 = K \cap H \cap E_1'$, so $X_1$ is a corank-2 flat of $M_1'$.
    Let $X = [\cl_{M_1}(X \cup p) \cup E_2] - p$.
    By \autoref{prop:hyperplanes of the parallel connection} and \autoref{lem: hyperplanes of M'}, $X$ is a hyperplane of $M'$.
    Moreover, $(K, H, X)$ is a modular triple of hyperplanes of $M'$, so there are constants $c,c'$ so that 
    $$g_K(e) + c \cdot g_H(e) + c'\cdot f_X(e) = 0$$
    for all $e \in E'$.
    Since $g_K|_{E_2'} = g_H|_{E_2'}$ and $f_X(e) = 0$ for all $e \in E_2'$, we see that $c = -1$, and so 
    $$g_K(e) - g_H(e) + c'\cdot f_X(e) = 0$$
    for all $e \in E'$.
    
    Next, note that $(K', H', X)$ is also a modular triple of $M'$, because $K' \cap H' \cap X$ is the union of $K \cap H \cap E_1'$ and $K' \cap E_2$, which is a corank-2 flat of $M'$.
    So there are constants $d, d'$ so that 
    $$g_{K'}(e) + d \cdot g_{H'}(e) + d'\cdot f_X(e) = 0$$
    for all $e \in E'$.
    Let $a \in (H - (K \cup X)) \cap E_1'$, and note that $a \in (H' - (K' \cup X)) \cap E_1'$ because $H|E_1' = H'|E_1'$ and $K|E_1' = K'|E_1'$.
    By plugging in $a$ to both equations, we see that $g_K(a) + c' \cdot f_X(a) = 0$ and $g_{K'}(a) + d'\cdot f_X(a) = 0$.
    Since $g_{K}(a) = g_{K'}(a)$ because $g_K|_{E_1'} = g_{K'}|_{E_1'}$, it follows that $c' = d'$.

    Now let $b \in (K - (H \cup X)) \cap E_1'$, and note that $b \in (K' - (H' \cup X)) \cap E_1'$ because $H|E_1' = H'|E_1'$ and $K|E_1' = K'|E_1'$.
    By plugging in $b$ to both equations we see that $-g_H(b) + c' \cdot f_X(b) = 0$ and $d \cdot g_{H'}(b) + d' \cdot f_X(b) = 0$.
    Since $c' = d'$ and $g_{H'}(b) = g_H(b)$ because $g_H|_{E_1'} = g_{H'}|_{E_1'}$, it follows that $d = -1$.
    Since $d = -1$, for any $e \in E_2' - (H' \cup K')$ we have $g_{K'}(e) - g_{H'}(e) = 0$, and so $g_{K'}|_{E_2'} = g_{H'}|_{E_2'}$, as desired.
\end{proof}

We have the following corollary, which is the only application of \autoref{claim: 2-sum rescaling} that we will need.
It shows that the map $\Psi$ from $\cH'$ to $\cH$ does not depend on whether we restrict $H_0$ to $E_1'$ or to $E_2'$.

\begin{claim} \label{claim: 2-sum symmetry}
Let $H$ be a hyperplane of $M'$ that contains neither $E_1'$ nor $E_2'$.
If $f_{H}|_{E_1'} * (c \cdot f_{H_0}|_{E_2'})$ is in $\cH'$ for some scalar $c$, then a scalar multiple of $(c\cdot f_{H_0}|_{E_1'}) * f_{H}|_{E_2'}$ is also in $\cH'$.
\end{claim}
\begin{proof}
Let $K$ be the hyperplane of $M'$ with $K \cap E_1' = H \cap E_1'$ and $K \cap E_2' = H_0 \cap E_2'$.
Note that $f_K = f_{H}|_{E_1'} * (c \cdot f_{H_0}|_{E_2'})$ by assumption.
Let $L = H_0$ and $K' = H$, and let $L'$ be the hyperplane of $M'$ with $L' \cap E_1' = H_0' \cap E_1'$ and $L' \cap E_2' = H \cap E_2'$.
Note that $f_K|_{E_1'} = f_{K'}|_{E_1'}$ because $K' = H$.
Let $g_K = f_K$ and $g_L = c \cdot f_L$; then $g_K|_{E_2'} = g_L|_{E_2'} = c \cdot f_{H_0}|_{E_2'}$.
Let $g_{K'}$ and $g_{L'}$ be scalar multiples of $f_{K'}, f_{L'} \in \cH'$, respectively, 
so that $g_{K'}|_{E_1'} = g_{K}|_{E_1'}$ and $g_{L'}|_{E_1'} = g_{L}|_{E_1'}$.
Then 
$$g_{K'}|_{E_1'} = g_K|_{E_1'} = f_K|_{E_1'} = f_{K'}|_{E_1'},$$
and since $g_{K'}$ is a scalar multiple of $f_{K'}$ it follows that $g_{K'} = f_{K'}$.
By applying \autoref{claim: 2-sum rescaling}, we know that $g_{K'}|_{E_2'} = g_{L'}|_{E_2'}$.
Then 
$$g_{L'}|_{E_1'} = g_{L}|_{E_1'} = c \cdot f_L|_{E_1'} = c\cdot f_{H_0}|_{E_1'}$$
and
$$g_{L'}|_{E_2'} = g_{K'}|_{E_2'} = f_{K'}|_{E_2'} = f_H|_{E_2'},$$
and so $g_{L'} = (c\cdot f_{H_0}|_{E_1'}) * f_{H}|_{E_2'}$ and the claim holds.
\end{proof}

Next, we will show that $\Psi(\cH')$ is a modular system of $P$-hyperplane functions of $M$.
Let $F$ be a corank-2 flat of $M$, and let $(H_1, H_2, H_3)$ be a modular triple of hyperplanes of $M$ so that $H_1 \cap H_2 \cap H_3 = F$.
By \autoref{lem: hyperplanes of M'}, $(H_1-p, H_2-p, H_3-p)$ is a modular triple of hyperplanes of $M'$, so there are constants $c_1, c_2, c_3$ so that 
$$c_1\cdot f_{H_1}(e) +c_2\cdot f_{H_2}(e)+c_3\cdot f_{H_3}(e) = 0$$
for all $e\in E'$.
We need only show that this also holds for $e = p$.
We consider two cases.

\textbf{Case 1:} Suppose $p \in F$.
Then $f_{H_i}(p) = 0$ for $i=1,2,3$.

\textbf{Case 2:} Suppose $p \notin F$.
Then outcome (3) of \autoref{prop: corank-2 flats of the parallel connection} holds for $F$, so there is some $i \in \{1,2\}$ so that $F \cap E_i$ is a hyperplane of $M_i$ and $F \cap E_{3-i}$ is a corank-2 flat of $M_{3-i}$.
We consider two subcases.

First suppose that $p \notin H_1 \cup H_2 \cup H_3$.
Then $H_1$, $H_2$, and $H_3$ all have the same restriction to $E_i'$, and so $f_{H_1}|_{E_i'}$, $f_{H_2}|_{E_i'}$, and $f_{H_3}|_{E_i'}$ are scalar multiples of each other.
If $i = 1$, then $H_1$, $H_2$, and $H_3$ agree on $E_1'$, so $(f_{H_1}(p),f_{H_2}(p), f_{H_3}(p))$ is a scalar multiple of $(f_{H_1}(e),f_{H_2}(e), f_{H_3}(e))$ for any $e\in E_1' -  F$. 
Hence $c_1\cdot f_{H_1}(p) +c_2\cdot f_{H_2}(p)+c_3\cdot f_{H_3}(p) =0$.
If $i = 2$, then $H_1$, $H_2$, and $H_3$ agree on $E_2'$, and it follows from \autoref{claim: 2-sum symmetry} that $(f_{H_1}(p),f_{H_2}(p), f_{H_3}(p))$ is a scalar multiple of $(f_{H_1}(e),f_{H_2}(e), f_{H_3}(e))$ for any $e\in E_2' -  F$. 
Again, $c_1\cdot f_{H_1}(p) +c_2\cdot f_{H_2}(p)+c_3\cdot f_{H_3}(p) =0$.

In the second subcase, suppose that $p \in H_j$ for some $j \in \{1,2,3\}$.
We may assume that $j = 1$.
Then $H_1$ contains $E_i$, so $f_{H_1}|_{E_i} = 0$ and we have $f_{H_3}|_{E_i'} = -\frac{c_2}{c_3} \cdot f_{H_2}|_{E_i'}$. 
First suppose that $i = 1$.
Then by the definition of $f_{H_2}(p)$, a multiple of $f_{H_2}|_{E_1'} * (f_{H_2}(p) \cdot f_{H_0}|_{E_2'})$ is in $\cH'$.
Similarly, a multiple of $f_{H_3}|_{E_1'} * (f_{H_3}(p) \cdot f_{H_0}|_{E_2'})$ is in $\cH'$.
Since $f_{H_3}|_{E_1'} = -\frac{c_2}{c_3} \cdot f_{H_2}|_{E_1'}$, a multiple of 
$-\frac{c_2}{c_3} \cdot f_{H_2}|_{E_1'} * (f_{H_3}(p) \cdot f_{H_0}|_{E_2'})$ is in $\cH'$, and by scaling we see that a multiple of $f_{H_2}|_{E_1'} * (-\frac{c_3}{c_2} \cdot f_{H_3}(p) \cdot f_{H_0}|_{E_2'})$ is in $\cH'$.
Therefore $f_{H_2}(p) = -\frac{c_3}{c_2} \cdot f_{H_3}(p)$, so $f_{H_3}(p) = -\frac{c_2}{c_3} \cdot f_{H_2}(p)$, and when $e = p$ we have 
\begin{align*}
0+c_2\cdot f_{H_2}(p) + c_3 \cdot \bigg(-\frac{c_2}{c_3} \cdot f_{H_2}(p)\bigg) = 0,
\end{align*}
as desired.
If $i = 2$, then \autoref{claim: 2-sum symmetry} allows us to use an identical argument, which we briefly describe. 
First, by the definition of $f_{H_2}(p)$ and \autoref{claim: 2-sum symmetry}, a multiple of $(f_{H_2}(p) \cdot f_{H_0}|_{E_1'}) * f_{H_2}|_{E_2'}$ is in $\cH'$.
Similarly, a multiple of $(f_{H_3}(p) \cdot f_{H_0}|_{E_1'}) * f_{H_3}|_{E_2'}$ is in $\cH'$.
Since $f_{H_3}|_{E_2'} = -\frac{c_2}{c_3} \cdot f_{H_2}|_{E_2'}$, a multiple of $(f_{H_3}(p) \cdot f_{H_0}|_{E_1'}) * (-\frac{c_2}{c_3} \cdot f_{H_2}|_{E_2'})$ is in $\cH'$.
Once again, it follows that $f_{H_3}(p) = -\frac{c_2}{c_3} \cdot f_{H_2}(p)$, and so $c_1\cdot f_{H_1}(p) +c_2\cdot f_{H_2}(p)+c_3\cdot f_{H_3}(p) = 0$, as desired.
It follows from Cases 1 and 2 that $\Psi(\cH')$ is a modular system of hyperplane functions, as claimed.

Next we will show that $\Phi$ and $\Psi$ are inverses of one another.
It is clear that $\Phi \circ \Psi$ is the identity map regardless of the choice of $H_0$. 
In the case of $\Psi \circ \Phi$, let $H_0$ be the hyperplane  that we fixed.
Note that $H_0$ is also a hyperplane of $M$.
Let $f_{H_0}\in \cH$, and let $f_{H}\in \cH$ for an arbitrary hyperplane $H$ of $M$.
Let $\overline{f_{H}}$ be the function in $\Psi \circ \Phi(\cH)$ such that $\overline{f_{H}} (e) = f_{H}(e)$ for all $e\in E'$.
If $p \in H$ then $\overline {f_H} = f_H$.
If $p \notin H$, then let $K = (H \cap E_1)\cup (H_0 \cap E_2)$; by \autoref{prop:hyperplanes of the parallel connection}, we know that $K$ is a hyperplane of $M$.
Since $K \cap E_2 = H_0\cap E_2$ we may assume, by scaling $f_K \in \cH$, that $f_{K}|_{E_2} = f_{H_0}|_{E_2}$.
In particular, $f_K(p) = f_{H_0}(p)$.
Since $K\cap E_1 = H\cap E_1$, we know that $f_{K}|_{E_1}$ is a scalar multiple of $f_{H}|_{E_1}$, and in particular we have $f_{K}|_{E_1} = \frac{f_{K}(p)}{f_{H}(p)} \cdot f_{H}|_{E_1} = \frac{f_{H_0}(p)}{f_{H}(p)} \cdot f_{H}|_{E_1}$.
Then $f_K = \left(\frac{f_{H_0}(p)}{f_{H}(p)} \cdot f_{H}|_{E_1}\right) * f_{H_0}|_{E_2}$. 
So, by definition, $\overline{f_{H}}(p) = \frac{1}{f_{H_0}(p)} \cdot f_{H}(p)$. 
The constant $\frac{1}{f_{H_0}(p)}$ only depends on the hyperplane $H_0$, so $\cH$ and $\Psi\circ\Phi(\cH)$  are in the same rescaling class. 
\end{proof}

\section{The foundation of a segment-cosegment exchange}
\label{sec:segmentcosegmentexchange}

In this section we show that if $M$ is a matroid and $X \subseteq E(M)$ is a coindependent set such that $M|X \cong U_{2,n}$ for some $n \ge 2$, then the segment-cosegment exchange of $M$ along $X$ has the same foundation as $M$.
We first recall the relevant definitions, which first appeared in \cite{Oxley-Semple-Vertigan}.

For each integer $n \ge 2$, the matroid $\Theta_n$ has ground set $X \sqcup Y$ where $X = \{x_1, x_2, \dots, x_n\}$ and $Y = \{y_1, y_2, \dots, y_n\}$, and the following bases:
\begin{itemize}
    \item $Y$, 
    
    \item $(Y - y_i) \cup x_j$ for distinct $i,j \in [n]$, and

    \item $(Y - Y') \cup X'$ where $Y' \subseteq Y$ and $X' \subseteq X$ and $|Y'| = |X'| = 2$.
\end{itemize}
The set $X$ is a modular flat of $\Theta_n$ and $\Theta_n|X \cong U_{2,n}$. 
Therefore, if $M$ is any matroid with $M|X \cong U_{2,n}$, the generalized parallel connection $P_X(M, \Theta_n)$ is well-defined.

The matroid $P_X(M, \Theta_n) \backslash X$, often denoted $\Delta_X(M)$, is called the \emph{segment-cosegment exchange} of $M$ along $X$.
When $n = 2$, $\{x_i, y_i\}$ is a series pair of $P_X(M, \Theta_2)$ for $i = 1,2$,  so $P_X(M, \Theta_2) \del X \cong M$.
When $n = 3$ we have $\Theta_3 \cong M(K_4)$ (the cycle matroid of the graph $K_4$), and $P_X(M, \Theta_3) \del X$ is also called the \emph{Delta-Wye exchange} of $M$ along $X$ \cite{Akkari-Oxley91}.

We next state some properties of $\Theta_n$.
There are three different types of hyperplanes of $\Theta_n$, depending on the size of their intersection with $X$.
This is straightforward to prove using the above description of the bases of $\Theta_n$.

\begin{prop} \label{prop: theta hyperplanes}
If $H$ is a hyperplane of $\Theta_n$, then either
\begin{enumerate}
\item $H = (Y - y_i) \cup x_i$ for some $i \in [n]$, or

\item $H = (Y - \{y_i, y_j\}) \cup x_k$ for distinct $i,j,k \in [n]$, or

\item $H = (X \cup Y) - \{y_i, y_j, y_k\}$ for distinct $i,j,k \in [n]$.
\end{enumerate}
\end{prop}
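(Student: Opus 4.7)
The plan is to proceed by case analysis on $|H \cap X|$. Since $Y$ is a basis of $\Theta_n$, the rank of $\Theta_n$ is $n$, so every hyperplane has rank $n-1$. Because $X$ is a flat of $\Theta_n$ isomorphic to $U_{2,n}$ and the intersection of two flats is a flat, $H \cap X$ is a flat of $U_{2,n}$, hence has size $0$, $1$, or $n$. The case $|H \cap X| = 0$ is ruled out immediately: then $H \subseteq Y$ is independent of size $n-1$, so $H = Y - y_i$, but $(Y - y_i) \cup x_i$ is not on the list of bases, so $x_i \in \cl(H)$ and $H$ fails to be a flat.

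For $|H \cap X| = 1$, write $H = T \cup \{x_m\}$ with $T \subseteq Y$. Since $r(H) \le |T| + 1 = n-1$ and $T = Y$ would make $H$ spanning, we must have $|T| \in \{n-2, n-1\}$. If $|T| = n-1$, so $T = Y - y_i$, then $(Y - y_i) \cup x_m$ is a basis iff $i \ne m$, so the hyperplane condition forces $i = m$, giving case (1). If $|T| = n-2$, say $T = Y - \{y_i, y_j\}$, then $H$ is a subset of a basis (e.g.\ $(Y - y_j) \cup x_m$ when $m \ne j$), hence independent of size $n-1$; the flat condition then distinguishes two subcases. If $m \in \{i, j\}$, say $m = i$, then $H \cup y_j = (Y - y_m) \cup x_m$ is already a hyperplane (from case (1)) of the same rank, so $y_j \in \cl(H)$ and $H$ is not a flat. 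If $m \notin \{i, j\}$, a direct check using the listed bases confirms that adding any element outside $H$ produces a basis, giving case (2).

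For $|H \cap X| = n$, write $H = X \cup T$ with $T \subseteq Y$. Since $r(X) = 2$, the bound $r(H) \le |T| + 2$ forces $|T| \ge n-3$. If $|T| \ge n-2$, then $H$ contains a basis of type $(Y - Y') \cup X'$ with $|Y'| = |X'| = 2$, so $r(H) = n$, too large. If $|T| = n-3$, say $T = Y - \{y_i, y_j, y_k\}$, then no basis fits inside $H$ (any basis in $X \cup T$ would need at least three elements of $X$, which is forbidden by the listed bases), so $r(H) \le n-1$; and the set $T \cup \{x_\ell, x_{\ell'}\}$ for distinct $\ell, \ell' \notin \{i,j,k\}$ is contained in the basis $(Y - \{y_i, y_j\}) \cup \{x_\ell, x_{\ell'}\}$, hence independent of size $n-1$, giving $r(H) = n-1$. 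The flat condition is automatic: adding any $y \in \{y_i, y_j, y_k\}$ yields $X \cup (Y - \{y', y''\})$, which contains a basis of type $(Y - Y') \cup X'$ and so has rank $n$. This yields case (3). The entire argument is bookkeeping against the three basis types of $\Theta_n$, and that bookkeeping is the only (minor) obstacle.
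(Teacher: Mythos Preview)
Your argument is correct and follows exactly the route the paper intends: the paper does not give a proof, merely remarking that the result ``is straightforward to prove using the above description of the bases of $\Theta_n$,'' and your case analysis on $|H\cap X|$ is precisely that bookkeeping. One small slip: in the case $|H\cap X|=n$ with $|T|=n-3$, the restriction $\ell,\ell'\notin\{i,j,k\}$ is both unnecessary and, for $n\in\{3,4\}$, impossible to satisfy; simply drop it, since $(Y-\{y_i,y_j\})\cup\{x_\ell,x_{\ell'}\}$ is a basis for \emph{any} distinct $\ell,\ell'$, and then the argument goes through for all $n\ge 3$.
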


Using the previous proposition, it is straightforward to show that there are four types of corank-2 flats of $\Theta_n$.
Note that outcomes $(1)$ and $(2)$ only occur when $n \ge 4$.

\begin{prop} \label{prop: theta corank-2 flats}
If $F$ is a corank-2 flat of $\Theta_n$, then either
\begin{enumerate}
\item $F = (X \cup Y) - \{y_i, y_j, y_k, y_l\}$ for distinct $i,j,k,l \in [n]$, or

\item $F = (Y - \{y_i, y_j, y_k\}) \cup x_l$ for distinct $i,j,k, l \in [n]$, or

\item $F = (Y - \{y_i, y_j, y_k\}) \cup x_i$ for distinct $i,j,k \in [n]$, or

\item $F = Y - \{y_i, y_j\}$ for distinct $i,j \in [n]$.
\end{enumerate}
\end{prop}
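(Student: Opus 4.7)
The plan is to parallel the structure of the preceding proof of Proposition~\ref{prop: theta hyperplanes}, classifying rank-$(n-2)$ flats by the size of $F\cap X$. Since $\Theta_n$ has rank $n$ (as $Y$ is a basis) and $X$ is a modular flat with $\Theta_n|X\cong U_{2,n}$, the intersection $F\cap X$ is a flat of $U_{2,n}$, and so $|F\cap X|\in\{0,1,n\}$.

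First I would establish, directly from the basis description of $\Theta_n$, three rank formulas: $r(Y-Y_0)=n-|Y_0|$ for any $Y_0\subseteq Y$; $r(\{x_l\}\cup(Y-Y_0))=n-|Y_0|+1$ whenever $|Y_0|\geq 2$; and $r(X\cup(Y-Y_0))=n-|Y_0|+2$ whenever $|Y_0|\geq 2$. Each is verified by exhibiting a basis of $\Theta_n$ of the form $(Y-Y')\cup X'$ that either contains the set or extends it by one element. A consequence worth recording separately is that $\cl(Y-Y_0)=Y-Y_0$ for every $Y_0\subseteq Y$ with $|Y_0|\geq 2$, since no $x_m$ lies in this closure.

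Armed with these formulas, the enumeration of candidate corank-$2$ flats becomes mechanical. If $|F\cap X|=0$ then $F\subseteq Y$ is independent of size $n-2$, so $F=Y-\{y_i,y_j\}$, giving case~(4). If $|F\cap X|=n$ then $F=X\cup (Y-Y_0)$ with $|Y_0|=4$, giving case~(1). If $F\cap X=\{x_l\}$ then $F=\{x_l\}\cup(Y-\{y_i,y_j,y_k\})$ for some distinct $i,j,k$; the subcase $l\in\{i,j,k\}$ yields case~(3), while $l\notin\{i,j,k\}$ yields case~(2). In each case I would then confirm that the candidate is genuinely a flat by checking that adjoining any element outside the set strictly increases its rank; every such check is a one-line application of the three rank formulas (for example, in case~(4), adjoining $y_i$ yields $Y-y_j$ of rank $n-1$, and adjoining $x_m$ yields a rank $n-1$ set by the second formula).

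The main obstacle is purely bookkeeping: tracking subscript constraints carefully so that cases~(2) and~(3) are visibly disjoint (the former requires four distinct indices, the latter only three) and that the four cases together with the trichotomy $|F\cap X|\in\{0,1,n\}$ exhaust all possibilities. Once the rank formulas are in place the argument is essentially automatic, which is what is meant by the proof being ``straightforward''.
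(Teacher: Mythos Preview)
Your proof is correct. The paper gives no detailed argument, saying only that the result is ``straightforward'' using the previous proposition (the hyperplane classification, Proposition~\ref{prop: theta hyperplanes}); the intended route is presumably to realize every corank-$2$ flat as an intersection of two hyperplanes from that list. Your approach is genuinely different: you bypass Proposition~\ref{prop: theta hyperplanes} entirely, working directly from the basis description via the trichotomy $|F\cap X|\in\{0,1,n\}$ and the three rank formulas. This is arguably cleaner, since it avoids a case analysis over pairs of hyperplane types, and it is self-contained. One minor remark: you do not actually need the modularity of $X$ to conclude that $F\cap X$ is a flat of $U_{2,n}$---the intersection of any two flats is a flat---so that hypothesis can be dropped from your opening sentence. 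Also note that your flat-verification step establishes the converse of the proposition as well, which is more than is being asserted (though harmless).
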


We next turn our attention to representations of $U_{2,n}$, and prove two properties that hold for any modular system of hyperplane functions of $U_{2,n}$.

\begin{prop} \label{prop: line hyperplane functions}
    Let $P$ be a pasture, and let $\cH$ be a modular system of $P$-hyperplane functions for $U_{2,n}$ on the ground set $X = \{x_1, x_2, \dots, x_n\}$. Then
    \begin{enumerate}
    \setlength\itemsep{0.75em}
        \item $f_{x_i}(x_j) = -f_{x_j}(x_i)$ for all distinct $i,j, \in [n]$, and 

        \item for all $1 \le i < j < k \le n$ we have 
    $$f_{x_j}(x_k) \cdot f_{x_i}(e) + f_{x_k}(x_i) \cdot f_{x_j}(e) + f_{x_i}(x_j) \cdot f_{x_k}(e) = 0
    $$
    for all $e \in X$.
    \end{enumerate}
\end{prop}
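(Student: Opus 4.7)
The plan is to reduce $\cH$ to a canonical normalized form via the Grassmann--Pl\"ucker correspondence, and then read off both (1) and (2) as standard antisymmetry and Pl\"ucker identities. I assume $n \geq 3$ throughout the main argument; the case $n = 2$ reduces to rescaling one of the two hyperplane functions so that (1) holds, while (2) is vacuous.

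Since $U_{2,n}$ has rank 2 and each hyperplane $\{x_i\}$ is spanned by the singleton $\{x_i\}$, Remark \ref{rem: rescaling equivalence for hyperplane functions and Grassmann-Pluecker functions} provides a weak Grassmann--Pl\"ucker function $\Delta : X^2 \to P$ (unique up to a global scalar) together with constants $c_i \in P^\times$ such that $f_{x_i}(e) = c_i \cdot \Delta(e, x_i)$ for every $i \in [n]$ and every $e \in X$. Replacing $\cH$ by the isomorphic modular system $\{c_i^{-1} f_{x_i}\}_{i \in [n]}$, I may assume $f_{x_i}(e) = \Delta(e, x_i)$ for all $i$ and $e$. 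In this normalization, (1) is immediate from the alternating property of $\Delta$: $f_{x_i}(x_j) = \Delta(x_j, x_i) = -\Delta(x_i, x_j) = -f_{x_j}(x_i)$.

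For (2), fix $1 \le i < j < k \le n$ and proceed case-by-case on $e \in X$. If $e = x_i$, the left-hand side of (2) collapses to $f_{x_k}(x_i) \cdot \bigl(f_{x_j}(x_i) + f_{x_i}(x_j)\bigr)$, which is null by (1) and pasture axiom \ref{P2}; the cases $e = x_j$ and $e = x_k$ are symmetric. If instead $e = x_l$ for some $l \notin \{i, j, k\}$ (which only arises when $n \geq 4$), then substituting $f_{x_a}(x_b) = \Delta(x_b, x_a)$ and using antisymmetry converts (2) into the three-term Pl\"ucker relation
\[
 \Delta(x_i, x_j)\Delta(x_k, x_l) \ - \ \Delta(x_i, x_k)\Delta(x_j, x_l) \ + \ \Delta(x_i, x_l)\Delta(x_j, x_k) \ = \ 0,
\]
which is part of the defining axioms of a weak Grassmann--Pl\"ucker function for $U_{2,n}$.

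The main obstacle lies in the normalization step. Remark \ref{rem: rescaling equivalence for hyperplane functions and Grassmann-Pluecker functions} specifies only the ratios $f_{x_i}(e)/f_{x_i}(e')$, so to extract well-defined scalars $c_i$ with $f_{x_i}(e) = c_i \cdot \Delta(e, x_i)$ I rely on the nonemptiness of $X \setminus \{x_i\}$, which holds as soon as $n \geq 2$. Once this normalization is in place, the remainder of the argument is essentially a routine unpacking of antisymmetry, the Pl\"ucker identities, and axiom \ref{P2}.
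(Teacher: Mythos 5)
Your proof follows the same overall route as the paper's: invoke the correspondence from \cite[Theorem 2.16]{Baker-Lorscheid20} between modular systems of hyperplane functions and weak Grassmann--Pl\"ucker functions, then read off (1) from the alternating property and (2) from the three-term Pl\"ucker relation. The paper's proof is a one-liner asserting directly that $\Delta(x_i, x_j) := f_{x_i}(x_j)$ is a weak Grassmann--Pl\"ucker function; you spell out the normalization step the paper leaves implicit, which is the right instinct, since the paper's shortcut is slightly circular (the alternating property of that raw $\Delta$ \emph{is} statement (1)).

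That said, there is a residual gap worth naming, and it is shared by the paper's proof. The step ``replacing $\cH$ by the isomorphic modular system $\{c_i^{-1} f_{x_i}\}_{i \in [n]}$, I may assume $f_{x_i}(e) = \Delta(e, x_i)$'' changes the object being reasoned about, and neither (1) nor (2) is invariant under independently rescaling the $f_{x_i}$. Concretely, over $P = \R$ take $f_{x_1}=(0,1,1)$, $f_{x_2}=(1,0,1)$, $f_{x_3}=(1,-1,0)$ on $U_{2,3}$: this is a modular system since $f_{x_1}-f_{x_2}+f_{x_3}=0$, yet $f_{x_1}(x_2)=f_{x_2}(x_1)=1$, so (1) fails; plugging into (2) at $e = x_1$ gives $1\cdot 0 + 1\cdot 1 + 1\cdot 1 = 2 \neq 0$. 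So after your rescaling you have proved (1)--(2) for the rescaled system, not for the $\cH$ given in the hypothesis. This is almost certainly what the authors intend --- in the downstream use (the proof of \autoref{thm:parallelconnectionwithTheta}) one is free to rescale, and only ratio-invariant consequences are ultimately needed --- but the proposition as literally stated should read ``after a suitable rescaling of the $f_{x_i}$.'' You should state this explicitly rather than present it as a ``WLOG.'' Your handling of the $n=2$ case has the same character: you note that (1) there ``reduces to rescaling,'' which again concedes that (1) is not a consequence of the raw modular-system axioms.
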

\begin{proof}
It follows from \cite[Theorem 2.16]{Baker-Lorscheid25} that the function $\Delta \colon X^2 \to P$ defined by $\Delta(x_ix_j) = f_{x_i}(x_j)$ is a 
(weak) Grassmann-Pl\"ucker function, which implies that $(1)$ and $(2)$ hold.
\end{proof}

Finally, we need a general lemma about rescaling a modular system of hyperplane functions along a triangle.

\begin{lemma} \label{lem:rescale on a triangle}
    Let $M$ be a matroid, let $T = \{x,y,z\}$ be a triangle of $M$, and let $P$ be a pasture.
    Let $\cH$ be a modular system of $P$-hyperplane functions for $M$.
    Then there is a modular system $\cH'$ of $P$-hyperplane functions for $M$ that is rescaling equivalent to $\cH$ and has the following properties:
    \begin{enumerate}
        \item If $H$ is a hyperplane of $M$ so that $|H\cap T| = 1$, then $f_H \in \cH'$ has values $0$, $1$, and $-1$ on $T$.

        \item If $H$ is a hyperplane of $M$ disjoint from $T$, then $f_H \in \cH'$ satisfies $f_H(x) + f_H(y) + f_H(z) = 0$.
    \end{enumerate}
\end{lemma}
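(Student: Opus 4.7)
The plan is to work at the level of the Grassmann--Pl\"ucker function corresponding to $\cH$. By \cite[Theorem 2.16]{Baker-Lorscheid20} together with \autoref{rem: rescaling equivalence for hyperplane functions and Grassmann-Plücker functions}, the rescaling class of $\cH$ is in bijective correspondence with the rescaling class of a weak Grassmann--Pl\"ucker function $\Delta:E^r\to P$ representing $M$. One first rescales the three elements $x,y,z$ (leaving all other elements unscaled) to put $\Delta$ into a canonical form, and then rescales each hyperplane function $f_H$ with $|H\cap T|=1$ individually to produce the exact values $\pm 1$ on $T$.

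Since $T$ is a triangle, $y$ and $z$ are parallel in the contraction $M/x$. Applying the three-term Pl\"ucker relation to the contracted function $\Delta(x,\,\cdot\,)$ (in which $\Delta(x,y,z,J)=0$ for suitable $J$ because $\{y,z\}$ is dependent in $M/x$) shows that there is a unique $\lambda\in P^\times$ with $\Delta(x,y,S)=\lambda\cdot\Delta(x,z,S)$ for every $(r-2)$-element subset $S\subseteq E\setminus T$ such that $\{x,y\}\cup S$ and $\{x,z\}\cup S$ are bases of $M$. Analogously define $\mu,\nu\in P^\times$ from $M/y$ and $M/z$ via $\Delta(y,x,S)=\mu\cdot\Delta(y,z,S)$ and $\Delta(z,x,S)=\nu\cdot\Delta(z,y,S)$. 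The antisymmetry of $\Delta$ then yields the consistency identity
\[
 \lambda\nu \ = \ \frac{\Delta(x,y,S)}{\Delta(x,z,S)}\cdot\frac{\Delta(x,z,S)}{\Delta(y,z,S)} \ = \ \frac{\Delta(x,y,S)}{\Delta(y,z,S)} \ = \ -\mu,
\]
i.e.\ $\mu+\lambda\nu=0$ in $P$. One sets $c_x:=-\mu^{-1}$, $c_y:=-\lambda^{-1}$, $c_z:=1$, and $c_e:=1$ for $e\in E\setminus T$; this identity ensures that these rescaling constants are internally consistent. Denote by $\Delta'$ the rescaled GP function and by $\cH'$ the associated modular system.

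Property (1) is then verified as follows: for $H$ with $H\cap T=\{x\}$ and a basis $\{x,e_3,\dotsc,e_r\}$ of $H$, the defining identity for $\lambda$ translates (via two applications of antisymmetry) to $f_H(y)=\lambda\cdot f_H(z)$, so after element rescaling $f'_H(y)=-c_H f_H(z)=-f'_H(z)$, and choosing $c_H:=1/f_H(z)$ gives $f'_H(z)=1$ and $f'_H(y)=-1$. The cases $H\cap T=\{y\}$ and $H\cap T=\{z\}$ are analogous, employing $\mu$ and $\nu$ respectively; the identity $\mu+\lambda\nu=0$ is precisely what makes the third case consistent. For property (2), let $H$ be disjoint from $T$ with basis $(e_2,\dotsc,e_r)$, and set $I:=\{e_2,\dotsc,e_{r-1}\}$. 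The three-term Pl\"ucker relation for $\Delta$ with the four elements $\{x,y,z,e_r\}$ and context $I$ yields the null relation
\[
 \Delta(y,z,I)\cdot f_H(x) \ + \ \Delta(z,x,I)\cdot f_H(y) \ + \ \Delta(x,y,I)\cdot f_H(z) \ = \ 0.
\]
The defining identities of $\lambda,\mu,\nu$ (applied with $S=I$, which is an allowed choice since $\{x,y\}\cup I$ and its analogues are bases) imply that $(c_x,c_y,c_z)\cdot\Delta(x,y,I)=(\Delta(y,z,I),\Delta(z,x,I),\Delta(x,y,I))$, so dividing the null relation by the nonzero element $\Delta(x,y,I)$ yields $c_xf_H(x)+c_yf_H(y)+c_zf_H(z)=0$, which is precisely $f'_H(x)+f'_H(y)+f'_H(z)=0$.

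The main obstacle is the careful bookkeeping of the antisymmetry of $\Delta$ and the passage between multiplicative identities in $P^\times$ (used to define $\lambda,\mu,\nu$ as ratios) and three-term null relations in $N_P$ (which encode the Pl\"ucker relations). Once the consistency identity $\mu+\lambda\nu=0$ is secured, the remainder reduces to a routine identification of Pl\"ucker coefficients with the rescaling constants $c_x,c_y,c_z$.
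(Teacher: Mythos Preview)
Your argument is essentially correct and takes a genuinely different route from the paper. The paper stays entirely within the hyperplane-function formalism: it fixes a reference corank-$2$ flat $L$ disjoint from $\cl(T)$, normalises one value of each $f_H$ with $|H\cap T|=1$, rescales at two of the three elements of $T$, and then uses modular triples together with an induction on $r(H\cap H_x)$ to propagate the normalisation to every hyperplane meeting $T$ in a single point; property~(2) is afterwards read off from one further modular triple. Your approach instead passes through the Grassmann--Pl\"ucker dictionary and encodes the entire normalisation in the three ratios $\lambda,\mu,\nu$, with the identity $\mu=-\lambda\nu$ replacing the paper's modular-triple computation. This is cleaner for property~(1) because the constancy of $f_H(y)/f_H(z)$ over all $H$ with $H\cap T=\{x\}$ is immediate from $y,z$ being parallel in $M/x$, so no induction is needed; the price is that you must invoke \cite[Theorem~2.16]{Baker-Lorscheid20} and \autoref{rem: rescaling equivalence for hyperplane functions and Grassmann-Pluecker functions} rather than arguing intrinsically with modular triples.

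One point does require more care. In your verification of~(2) you take an arbitrary basis $(e_2,\dotsc,e_r)$ of $H$, set $I=\{e_2,\dotsc,e_{r-1}\}$, and assert that $\{x,y\}\cup I$, $\{y,z\}\cup I$, $\{x,z\}\cup I$ are bases of $M$. This is not automatic: if some element of $\cl(T)\setminus T$ lies in $H$ and happens to land in $I$, the claim can fail (for instance in rank~$3$, when $I$ consists of the intersection point of $H$ with $\cl(T)$). A suitable choice of basis always exists, however: in $M/x$ the element $y$ is not a loop and $H$ spans $M/x$, so some basis $B'\subseteq H$ of $M/x$ admits $b\in B'$ with $y\notin\cl_{M/x}(B'\setminus\{b\})$; taking $I=B'\setminus\{b\}$ makes all three sets bases (the three conditions are equivalent since $\cl(\{x,y\})=\cl(\{y,z\})=\cl(\{x,z\})$). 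With this adjustment your division by $\Delta(x,y,I)$ is legitimate and the remainder of the argument goes through as written.
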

\begin{proof}
Let $B$ be a basis of $M/T$, and let $L = \cl_M(B)$.
Let $H_x$, $H_y$, and $H_z$ be $\cl(L \cup x)$, $\cl(L \cup y)$, and $\cl(L \cup z)$, respectively. 
Note that $(H_x, H_y, H_z)$ is a modular triple of hyperplanes of $M$.
By scaling functions in $\cH$, we may assume that if $H$ is a hyperplane and $H \cap T = \{x\}$, then $f_H(y) = 1$.
Similarly, we may assume that if $H \cap T = \{y\}$ then $f_H(z) = 1$, and if $H \cap T = \{z\}$ then $f_H(x) = 1$.
Now, scale $\cH$ by $\frac{-1}{f_{H_x}(z)}$ at $z$ and by $\frac{-1}{f_{H_y}(x)}$ at $x$, and let $\cH'$ be the resulting system of $P$-hyperplane functions for $M$.
Note that $f_{H_x}(z) = -1$ and $f_{H_y}(x) = -1$, as desired.

We first show that $f_{H_z}(y) = -1$.
Since $(H_x, H_y, H_z)$ is a modular triple, there are constants $c', c''$ so that 
$$f_{H_x}(e) + c'\cdot f_{H_y}(e) + c''\cdot f_{H_z}(e) = 0$$ for all $e \in E(M)$.
Setting $e = z$ shows that $c' = 1$, and setting $e = x$ shows that $c'' = 1$.
Then setting $e = y$ shows that $f_{H_z}(y) = -1$, as desired.

Now we prove $(1)$.
We present the argument only for hyperplanes $H$ with $H \cap T = \{x\}$, but the argument is very similar when $H \cap T \in \{y, z\}$.
Suppose there is a hyperplane $H$ of $M$ with $H \cap T  = \{x\}$ so that $f_H(z) \ne -1$, and let $r(H \cap H_x)$ be maximal with these properties.
Let $\mathscr T$ be the linear subclass of hyperplanes of $M$ that contain $T$.
By \autoref{prop: linear subclasses} with $(H, K, \mathscr H) = (H, H_x, \mathscr T)$, there is a hyperplane $H'$ (possibly $H_x$) so that $(H, H')$ is a modular pair, $H'$ contains $H \cap H_x$ but not $T$, and $r(H' \cap H_x) > r(H \cap H_x)$.
Since $H'$ contains $H \cap H_x$ but not $T$ we see that $H' \cap T = \{x\}$.
By the maximality of $r_M(H \cap H_x)$, it follows that $f_{H'}(z) = -1$.
Let $F = H \cap H'$, and let $H'' = \cl(F \cup T)$.
Then $(H, H', H'')$ is a modular triple because $F$ is a corank-2 flat of $M$, so there are constants $c, c''$ so that 
$$c\cdot f_H(e) + f_{H'}(e) + c''\cdot f_{H''}(e) = 0$$ 
for all $e \in E(M)$.
Setting $e = y$ shows that $c = -1$, and then setting $e = z$ shows that $f_H(z) = -1$, a contradiction.
This establishes $(1)$.

We now prove $(2)$.
Let $H$ be a hyperplane of $M$ which is disjoint from $T$.
Let $F$ be a corank-2 flat of $M$ contained in $H$, and let $H_x = \cl(F \cup x)$, $H_y = \cl(F \cup y )$.
Then $(H, H_x, H_y)$ is a modular triple, so there are constants $c$ and $c'$ so that
$$f_H(e) + c\cdot f_{H_x}(e) + c' \cdot f_{H_y}(e) = 0$$
for all $e \in E(M)$.
By setting $e = x$, we see that $c' = -\frac{f_H(x)}{f_{H_y}(x)}$, and by setting $e = y$, we see that $c = -\frac{f_H(y)}{f_{H_x}(y)}$.
Setting $e = z$ then gives 
$$f_H(z) -\frac{f_H(y)}{f_{H_x}(y)}\cdot f_{H_x}(z) -\frac{f_H(x)}{f_{H_y}(x)} \cdot f_{H_y}(z) = 0,$$
and since $\frac{f_{H_x}(z)}{f_{H_x}(y)} = \frac{f_{H_y}(z)}{f_{H_y}(x)} = -1$ by $(1)$, this simplifies to $f_H(z) + f_H(y) + f_H(x) = 0$.
\end{proof}

We now prove that forming the generalized parallel connection with $\Theta_n$ preserves foundations.
Note that we do not require $X$ to be coindependent; that is only necessary for the subsequent argument in which we delete $X$.

\begin{thm}
\label{thm:parallelconnectionwithTheta}
    Let $M_1$ be a matroid, let $X\subseteq E(M_1)$ so that $M_1|X \cong U_{2,n}$ for some $n \ge 2$, and let $M = P_X(M_1, \Theta_n)$.
    Then $F_M \cong F_{M_1}$.
\end{thm}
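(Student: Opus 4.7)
The plan is to establish, for every pasture $P$, a natural bijection between $\cX^R_M(P)$ and $\cX^R_{M_1}(P)$ that is functorial in $P$. By the universal property of the foundation recalled in Section~2, this yields $F(M) \cong F(M_1)$.

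The forward map $\Phi_P \colon \cX^R_M(P) \to \cX^R_{M_1}(P)$ is given by restriction: for a modular system $\cH$ of $P$-hyperplane functions for $M$, set $\Phi_P(\cH) = \{f_H|_{E_1} : H \in \cH,\ E_1 \not\subseteq H\}$. By \autoref{Prop:hyperplanes of the parallel connection (r(T) = 2)}, these restrictions are exactly the hyperplane functions for the hyperplanes of $M_1$; moreover, modular triples in $M_1$ lift to modular triples of $M$ via closure in $M$, so $\Phi_P(\cH)$ is a modular system for $M_1$.

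The main content is the construction of an inverse map $\Psi_P \colon \cX^R_{M_1}(P) \to \cX^R_M(P)$. Given a modular system $\cH_1$ for $M_1$, we rescale so that its restriction to $X \cong U_{2,n}$ satisfies $f_{x_i}(x_j) = -f_{x_j}(x_i)$, which is automatic by \autoref{prop: line hyperplane functions}. For each hyperplane $H$ of $M$ enumerated in \autoref{Prop:hyperplanes of the parallel connection (r(T) = 2)}, define $f_H$ as follows. For Types~2, 3 and 4, set $f_H|_{E_1} = f_{H_1'}$ where $H_1' = H \cap E_1$; the values $f_H|_Y$ are then forced, via the hyperplane structure of $\Theta_n$ (\autoref{prop: theta hyperplanes}), by the $U_{2,n}$ data $\{f_{x_i}(x_j)\}$, since a hyperplane function for $\Theta_n$ is determined up to a global scalar by its restriction to $X$. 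For Type~1 hyperplanes $H = E_1 \cup (Y \setminus \{y_i,y_j,y_k\})$, set $f_H|_{E_1} \equiv 0$ and use the Pl\"ucker-type relation in \autoref{prop: line hyperplane functions}, applied to the triple $x_i,x_j,x_k$, to define $f_H(y_i), f_H(y_j), f_H(y_k)$.

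To verify that $\Psi_P(\cH_1)$ is a modular system, we check the linear dependence imposed by each of the seven types of corank-$2$ flats in \autoref{Prop:corank-2 flats of the parallel connection (r(T) = 2)}, together with the modular-quadruple relations arising from the corank-$2$ flats of $\Theta_n$ of the form $(X \cup Y) \setminus \{y_i,y_j,y_k,y_l\}$ (\autoref{prop: theta corank-2 flats}). Cases where the flat lies entirely in $E_1$ or in $E_2$ reduce to modular relations in $M_1$ or in $\Theta_n$, respectively, while mixed types require combining both, which succeeds thanks to \autoref{prop: line hyperplane functions}. The identity $\Phi_P \circ \Psi_P = \id$ is immediate by construction, and $\Psi_P \circ \Phi_P$ agrees with the identity up to rescaling because the $Y$-values of each $f_H$ are uniquely determined, modulo rescaling of $f_H$, by the modular relations of $M$. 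Functoriality in $P$ is transparent from the formulas. The main obstacle is the case analysis for the mixed corank-$2$ flat types (Types~3--5), where one must verify that the coefficients produced by the $M_1$-side and the $\Theta_n$-side of the modular relation coincide, a matching ensured precisely by the normalization $f_{x_i}(x_j) = -f_{x_j}(x_i)$ and the Pl\"ucker identity of \autoref{prop: line hyperplane functions}.
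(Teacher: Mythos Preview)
Your overall strategy matches the paper's: restrict a modular system on $M$ to $E_1$, and conversely extend a modular system on $M_1$ to $M$ by prescribing values on $Y$, then verify the modular relations by running through the corank-$2$ flat types of \autoref{Prop:corank-2 flats of the parallel connection (r(T) = 2)}. However, several steps in your outline do not go through as written.

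First, the normalization you invoke is too weak. The antisymmetry $f_{x_i}(x_j)=-f_{x_j}(x_i)$ is indeed automatic, but it is not what drives the argument. What the paper actually needs (and uses) is a rescaling so that every hyperplane $H_1$ of $M_1$ with $H_1\cap X=\{x_i\}$ satisfies $f_{H_1}(x_j)=f_{x_i}(x_j)$ for all $j$; without this, the explicit formulas for the extension and the coefficient-matching in the mixed cases do not line up. Second, your treatment of Type~4 hyperplanes is off: when $H\cap E_1$ is disjoint from $X$, the set $H\cap E_2=Y\setminus\{y_i,y_j\}$ is a corank-$2$ flat of $\Theta_n$, not a hyperplane, so \autoref{prop: theta hyperplanes} does not apply and the $Y$-values are not determined by the $U_{2,n}$ data $\{f_{x_i}(x_j)\}$ alone---they depend on $f_{H_1}|_X$ as well (the paper sets $f_H(y_i)=f_{H_1}(x_j)/f_{x_i}(x_j)$).

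The most substantive gap is in the verification for corank-$2$ flats of the form $F=H_1\cup(Y\setminus\{y_i,y_j,y_k\})$ with $H_1$ a hyperplane of $M_1$ disjoint from $X$ (Case~6 in the paper). Here one must show that $f_{H_1}(x_i)+f_{H_1}(x_j)+f_{H_1}(x_k)=0$ after a suitable rescaling of $\cH_1$ at the triangle $\{x_i,x_j,x_k\}$. This is \emph{not} a consequence of \autoref{prop: line hyperplane functions}, which concerns only the hyperplane functions $f_{x_i}$ of $U_{2,n}$; it requires the separate \autoref{lem:rescale on a triangle}, which you do not invoke. Without this lemma, the modular relation for such $F$ cannot be verified and the construction of $\Psi_P$ is incomplete.
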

\begin{proof}
When $n = 2$ we know that the cosimplification of $M$ is isomorphic to $M_1$ because $\{x_i, y_i\}$ is a series pair of $M$ for $i = 1,2$.
So by \cite[Corollary 4.10]{Baker-Lorscheid25}, we may assume that $n \ge 3$.
Let $E_1$ be the ground set of $M_1$, and let $E_2 = X \cup Y$ be the ground set of $\Theta_n$ with $X = \{x_1, x_2, \dots, x_n\}$ and $Y = \{y_1, y_2, \dots, y_n\}$.
Let $E = E_1 \cup E_2$.

Let $P$ be a pasture.
Given a modular system $\cH$ of $P$-hyperplane functions for $M$, we define a modular system $\cH_1$ of $P$-hyperplane functions for $M_1$ by restriction to $E_1$, so $\cH_1 = \cH|_{E_1}$.
Conversely, let $\cH_1$ be a modular system of $P$-hyperplane functions for $M_1$.
Note that $\cH_1$ induces a modular system $\mathcal H_1|_X$ of $P$-hyperplane functions of $U_{2,n}$ by restriction to $X$; we write $f_{x_i}$ for the function in $\cH_1|_X$ corresponding to the hyperplane $x_i$ of $M_1|X$.
By \autoref{prop: scaling of induced hyperplane functions} we may assume, by rescaling the functions in $\cH_1$, that for all distinct $i,j \in [n]$, if $H_1$ is a hyperplane of $M_1$ with $H_1 \cap X = \{x_i\}$, then $f_{H_1}(x_j) = f_{x_i}(x_j)$.
We will define a modular system $\cH$ of $P$-hyperplane functions for $M$ so that $\cH|_{E_1} = \cH_1$, up to rescaling equivalence.

For each hyperplane $H$ of $M$, we will define the corresponding function $f_H \in \cH$ by separately considering the five different possibilities for the type of $H$.
These five possibilities arise by applying Propositions \ref{Prop:hyperplanes of the parallel connection (r(T) = 2)}, \ref{prop: theta hyperplanes}, and \ref{prop: theta corank-2 flats}; note that we split outcome (3) of \autoref{Prop:hyperplanes of the parallel connection (r(T) = 2)} into two separate cases depending on the form of the hyperplane of $\Theta_n$:
\begin{enumerate}
\setlength\itemsep{1em}
\item If $H = E_1 \cup (Y - \{y_i,y_j, y_k\})$ for distinct $i,j,k \in [n]$ with $i < j < k$, define 
\begin{itemize}
    \item $f_H(y_i) = f_{x_j}(x_k)$,

    \item $f_H(y_j) = f_{x_k}(x_i)$, and

    \item $f_H(y_k) = f_{x_i}(x_j)$.
\end{itemize}

\item If $H = H_1 \cup E_2$, where $H_1$ is a hyperplane of $M_1$ that contains $X$, define $f_H(e) = f_{H_1}(e)$ for all $e \in E$.

\item If $H = H_1 \cup ((Y - y_i) \cup x_i)$ for $i \in [n]$, where $H_1$ is a hyperplane of $M_1$ with $H_1 \cap X = \{x_i\}$, define 
\begin{itemize}
    \item $f_H(e) = f_{H_1}(e)$ for all $e \in E_1$ (in particular, $f_H(x_j) = f_{x_i}(x_j)$ for all distinct $i,j \in [n]$), and

    \item $f_H(y_i) = 1$.
\end{itemize} 

\item[(3')] If $H = H_1 \cup ((Y - \{y_i, y_j\}) \cup x_k)$ for distinct $i,j,k \in [n]$ with $i < j$, where $H_1$ is a hyperplane of $M_1$ with $H_1 \cap X = \{x_k\}$, define 
\begin{itemize}
\item $f_H(e) = f_{H_1}(e)$ for all $e \in E_1$ (in particular, $f_H(x_l) = f_{x_k}(x_l)$ for all $l \notin \{i,j,k\}$), 

\item $f_H(y_i) = \frac{-f_{x_j}(x_k)}{f_{x_i}(x_j)}$, and 

\item$f_H(y_j) = \frac{f_{x_i}(x_k)}{f_{x_i}(x_j)}$.
\end{itemize}

\item If $H = H_1 \cup (Y - \{y_i, y_j\})$ for distinct $i,j \in [n]$, where $H_1$ is a hyperplane of $M_1$ disjoint from $X$, define
\begin{itemize}
\item $f_H(e) = f_{H_1}(e)$ for all $e \in E_1$, 

\item $f_H(y_i) = \frac{f_{H_1}(x_j)}{f_{x_i}(x_j)}$, and

\item $f_H(y_i) = \frac{f_{H_1}(x_i)}{f_{x_j}(x_i)}$.
\end{itemize}
\end{enumerate}

We now have a well-defined map from $\cH_1$ to a set $\cH$ of hyperplane functions for $M$.
Clearly $\cH|_{E_1} = \cH_1$, so it suffices to show that $\cH$ is a modular system.

Let $F$ be a corank-2 flat of $M$, and let $(H, H', H'')$ be a modular triple of hyperplanes of $M$ with $H \cap H' \cap H'' = F$.
By \autoref{Prop:corank-2 flats of the parallel connection (r(T) = 2)}, there are seven possibilities for $F$, which we consider separately.
(Some cases only occur when $n \ge 4$ or $n \ge 5$.)
We split outcome (4) of \autoref{Prop:corank-2 flats of the parallel connection (r(T) = 2)} into two cases depending on the form of the hyperplane of $\Theta_n$.
Also, each hyperplane or corank-2 flat of $\Theta_n$ is associated with a given subset of $[n]$; we will explicitly choose this subset without loss of generality to improve readability. 
We also choose $(H, H', H'')$ up to permutation.

\textbf{Case 1:} $F = E - \{y_1, y_2, y_3, y_4\}$.
Then $(H, H', H'') = (F \cup y_1, F \cup y_2, F \cup y_3)$.
We will show that 
$$[f_{x_1}(x_4)] \cdot f_{H}(e) - [f_{x_2}(x_4)] \cdot f_{H'}(e) + [f_{x_3}(x_4)] \cdot f_{H''}(e) = 0$$ for all $e \in E$. 
Without loss of generality, this only needs to be checked for $e = y_1$ and $e = y_4$.
When $e = y_1$, by applying (1) we have 
$$- [f_{x_2}(x_4)] \cdot f_{x_3}(x_4) + [f_{x_3}(x_4)] \cdot f_{x_2}(x_4) = 0.$$
When $e = y_4$, using (1) we have
$$[f_{x_1}(x_4)] \cdot f_{x_2}(x_3) - [f_{x_2}(x_4)] \cdot f_{x_1}(x_3) + [f_{x_3}(x_4)] \cdot f_{x_1}(x_2),$$
which is equal to $0$ by \autoref{prop: line hyperplane functions}.

\textbf{Case 2:} $F = F_1 \cup E_2$, where $F_1$ is a corank-2 flat of $M_1$ that contains $X$.
Then there is a modular triple $(H_1, H_1', H_1'')$ of hyperplanes of $M_1$ so that $(H, H', H'') = (H_1 \cup E_2, H_1' \cup E_2,  H''_1 \cup E_2)$.
So there are constants $c, c', c''$ such that 
$$c \cdot f_{H_1}(e) + c' \cdot f_{H_1'}(e) + c'' \cdot f_{H_1''}(e) = 0$$ for all $e \in E_1$, and it follows from (2) that 
$$c \cdot f_{H}(e) + c' \cdot f_{H'}(e) + c'' \cdot f_{H''}(e) = 0$$ for all $e \in E$.

\textbf{Case 3:} $F = H_1 \cup (Y - \{y_1, y_2, y_3\})$, where $H_1$ is a hyperplane of $M_1$ that contains $X$.
Then there is no modular triple of hyperplanes containing $F$, because the only hyperplanes containing $F$ are $F \cup E_1$ and $F \cup E_2$.

\textbf{Case 4:} $F = H_1 \cup ((Y - \{y_1, y_2, y_3\}) \cup x_4)$ where $H_1$ is a hyperplane of $M_1$ with $H_1 \cap X = \{x_4\}$.
There are two subcases.
In the first subcase, $(H, H', H'') = (F \cup y_1, F \cup y_2, F \cup y_3)$.
We will show that 
\renewcommand{\theequation}{\alph{equation}}
\setcounter{equation}{0}
\begin{equation} 
\label{eq:show-equal-to-zero}    
[f_{x_1}(x_4) \cdot f_{x_2}(x_3)] \cdot f_{H}(e) + [f_{x_4}(x_2) \cdot f_{x_1}(x_3)] \cdot f_{H'}(e) + [f_{x_3}(x_4) \cdot f_{x_1}(x_2)] \cdot f_{H''}(e) = 0
\end{equation}
for all $e \in E$.
When $e \in E_1$, this follows from \autoref{prop: line hyperplane functions} and the fact that $f_H(e) = f_{H'}(e) = f_{H''}(e)$ by (3').
When $e = y_1$, using (3'), the left-hand side of (\ref{eq:show-equal-to-zero}) becomes
$$[f_{x_4}(x_2) \cdot f_{x_1}(x_3)] \cdot \frac{-f_{x_3}(x_4)}{f_{x_1}(x_3)} + [f_{x_3}(x_4) \cdot f_{x_1}(x_2)] \cdot \frac{-f_{x_2}(x_4)}{f_{x_1}(x_2)},$$
which is equal to $0$ by \autoref{prop: line hyperplane functions}.

In the second subcase, $(H, H', H'') = (F \cup y_1, F \cup y_2, F \cup E_1)$.
We will show that 
$$[f_{x_3}(x_1) \cdot f_{x_2}(x_3)] \cdot f_{H}(e) + [f_{x_2}(x_3) \cdot f_{x_1}(x_3)] \cdot f_{H'}(e) + [f_{x_3}(x_4)] \cdot f_{H''}(e) = 0$$ for all $e \in E$.
When $e \in E_1$ this follows from the fact that $f_H(e) = f_{H'}(e)$ by (3').
When $e = y_1$, using (1) and (3'), we have 
$$[f_{x_3}(x_1) \cdot f_{x_2}(x_3)] \cdot \frac{-f_{x_3}(x_4)}{f_{x_1}(x_3)} + [f_{x_3}(x_4)] \cdot {f_{x_2}(x_3)} = 0,$$
and when $e = y_3$, using (1) and (3'), the left-hand side of (\ref{eq:show-equal-to-zero}) becomes
$$[f_{x_3}(x_1) \cdot f_{x_2}(x_3)] \cdot \frac{f_{x_2}(x_4)}{f_{x_2}(x_3)} + [f_{x_2}(x_3) \cdot f_{x_1}(x_3)] \cdot \frac{f_{x_1}(x_4)}{f_{x_1}(x_3)} + [f_{x_3}(x_4)] \cdot f_{x_1}(x_2),$$
which is equal to $0$ by \autoref{prop: line hyperplane functions}.

\textbf{Case 4':} $F = H_1 \cup ((Y - \{y_1, y_2, y_3\}) \cup x_1)$, where $H_1$ is a hyperplane of $M_1$ with $H_1 \cap X = \{x_1\}$.
Then $(H, H', H'') = (F \cup y_1 , F \cup \{y_2, y_3\}, F \cup E_1)$.
We will show that 
$$[f_{x_2}(x_3)] \cdot f_{H}(e) + [f_{x_3}(x_2)] \cdot f_{H'}(e) + f_{H''}(e) = 0$$ for all $e \in E$.
When $e \in E_1$, this follows from the fact that $f_H(e) = f_{H'}(e) = f_{H_1}(e)$ by (3) and (3').
When $e = y_1$, using (1) and (3), we have
$$[f_{x_3}(x_2)] \cdot 1 + f_{x_2}(x_3) = 0.$$
When $e = y_3$, using (1) and (3'), we have 
$$[f_{x_2}(x_3)] \cdot \frac{f_{x_2}(x_1)}{f_{x_2}(x_3)} + f_{x_1}(x_2) = 0.$$

\textbf{Case 5:} $F = F_1 \cup H_2$, where $F_1$ is a corank-2 flat of $M_1$, $H_2$ is a hyperplane of $\Theta_n$, and $F_1 \cap X = H_2 \cap X = \{x_1\}$.
Then $(H \cap E_1, H' \cap E_1, H'' \cap E_1)$ is a modular triple of hyperplanes of $M_1$, so there are constants $c,c',c''$ so that 
\begin{equation}
\label{eq:show-equal-to-zero-7}
c \cdot f_{H \cap E_1}(e) + c'\cdot f_{H' \cap E_1}(e) + c'' \cdot f_{H'' \cap E_1}(e) = 0
\end{equation}
for all $e \in E_1$.
By (2), (3), and (3'), this implies that $c \cdot f_H(e) + c' \cdot f_{H'}(e) + c'' \cdot f_{H''}(e) = 0$ for all $e \in E_1$, so we only need to show that this also holds for all $y_i$.
At most one of $H, H', H''$ contains $E_2$; we may assume that $H$ and $H'$ do not contain $E_2$.
We consider two cases depending on whether or not $E_2 \subseteq H''$.
First suppose that $H''$ does not contain $E_2$.
Then $x_2 \notin H \cup H' \cup H''$.
Since $H \cap X = H' \cap X = H'' \cap X = \{x_1\}$ we know that $f_{H \cap E_1}(x_i) = f_{H' \cap E_1}(x_i) = f_{H' \cap E_1}(x_i) = f_{x_1}(x_i)$ for all $i \in [n]$ due to the scaling assumption on $\cH_1$.
Then plugging in $e = x_2$ to (\ref{eq:show-equal-to-zero-7}) shows that $c + c' + c'' = 0$.
Since $H$, $H'$, $H''$ all have the same restriction to $E_2$ (namely $H_2$), either $f_H$, $f_{H'}$, $f_{H''}$ are all defined using (3) or they are all defined using (3'), and it follows from (3) or (3') that $f_H(y_i) = f_{H'}(y_i) = f_{H''}(y_i)$ for all $i \in [n]$.
Therefore $c \cdot f_H(y_i) + c' \cdot f_{H'}(y_i) + c'' \cdot f_{H''}(y_i) = 0$ for all $i \in [n]$.

In the second case, suppose that $E_2 \subseteq H''$. 
Since $H \cap X = H' \cap X = \{x_1\}$ we know that $f_{H \cap E_1}(x_i) = f_{H' \cap E_1}(x_i) = f_{x_1}(x_i)$ for all $i \in [n]$ due to the scaling assumption on $\cH_1$.
Then plugging in $e = x_2$ to (\ref{eq:show-equal-to-zero-7}) shows that $c + c' = 0$, because $x_2 \in H''$.
Since $H$ and $H'$ have the same restriction to $E_2$ (namely $H_2$), either $f_H$ and $f_{H'}$ are both defined using (3) or they are both defined using (3'), and it follows from (3) or (3') that $f_H(y_i) = f_{H'}(y_i)$ for all $i \in [n]$.
Since $f_{H''}(y_i) = 0$ for all $i \in [n]$ and $c + c' = 0$, we see that $c \cdot f_H(y_i) + c' \cdot f_{H'}(y_i) + c'' \cdot f_{H''}(y_i) = 0$ for all $i \in [n]$.

\textbf{Case 6:} $F = H_1 \cup (Y - \{y_1, y_2, y_3\})$, where $H_1$ is a hyperplane of $M_1$ disjoint from $X$.
\autoref{lem:rescale on a triangle} (1) implies that by scaling $\cH_1$ at the triangle $\{x_1, x_2, x_3\}$, we may assume that if $H_0$ is a hyperplane of $M_1$ with $|H_0 \cap \{x_1, x_2, x_3\}| = 1$ then $f_{H_0}$ takes values $0$, $1$, and $-1$ on $\{x_1, x_2, x_3\}$.
It follows from \autoref{lem:rescale on a triangle} (2) that $f_{H_1}(x_1) + f_{H_1}(x_2) + f_{H_1}(x_3) = 0$. 
We may further assume, by rescaling functions, that $f_{x_1}(x_2) = 1$.

We now consider two subcases.
In the first subcase, $(H, H', H'') = (F \cup y_1, F \cup y_2, F \cup y_3)$.
We will show that 
\begin{equation}
\label{eq:show-equal-to-zero-2}
[f_{H_1}(x_1) \cdot f_{x_2}(x_3)] \cdot f_H(e) + [f_{H_1}(x_2) \cdot f_{x_3}(x_1)] \cdot f_H(e) + [f_{H_1}(x_3) \cdot f_{x_1}(x_2)] \cdot f_H(e) = 0
\end{equation}
for all $e \in E$.

When $e \in E_1$, we know that $f_H(e) = f_{H'}(e) = f_{H''}(e) = f_{H_1}(e)$ by (4).
Since $f_{x_1}(x_2) = 1$, we know that $f_{x_1}(x_3) = -1$, and so by \autoref{prop: line hyperplane functions} we have $f_{x_3}(x_1) = 1$.
Similarly, $f_{x_2}(x_3) = 1$, and then \eqref{eq:show-equal-to-zero-2} holds because 
$f_{H_1}(x_1) + f_{H_1}(x_2) + f_{H_1}(x_3) = 0$ by \autoref{lem:rescale on a triangle} (2).

When $e = y_1$, using (4), the equation \eqref{eq:show-equal-to-zero-2} reduces to 
$$[f_{H_1}(x_2) \cdot f_{x_3}(x_1)] \cdot \frac{f_{H_1}(x_3)}{f_{x_1}(x_3)} + [f_{H_1}(x_3) \cdot f_{x_1}(x_2)] \cdot \frac{f_{H_1}(x_2)}{f_{x_1}(x_2)} = 0.$$

In the second subcase, $(H, H', H'') = (F \cup y_1, F \cup y_2, F \cup E_1)$.
It is similarly straightforward to check that 
\begin{equation}
\label{eq:show-equal-to-zero-3}
[f_{x_1}(x_3) \cdot f_{x_2}(x_3)] \cdot f_H(e) + [f_{x_3}(x_1) \cdot f_{x_2}(x_3)] \cdot f_{H'}(e) + [f_{H_1}(x_3)] \cdot f_{H''}(e) = 0
\end{equation}
for all $e \in E$.
When $e \in E_1$, this follows from the fact that $f_H(e) = f_{H'}(e) = f_{H_1}(e)$ by (4).
When $e = y_1$, applying (1) and (4) gives
$$[f_{x_3}(x_1) \cdot f_{x_2}(x_3)] \cdot \frac{f_{H_1}(x_3)}{f_{x_1}(x_3)} + [f_{H_1}(x_3)] \cdot f_{x_2}(x_3) = 0,$$
 and when $e = y_3$, applying (4) shows that the left-hand side of \eqref{eq:show-equal-to-zero-3} is equal to
$$[f_{x_1}(x_3) \cdot f_{x_2}(x_3)] \cdot \frac{f_{H_1}(x_2)}{f_{x_3}(x_2)} + [f_{x_3}(x_1) \cdot f_{x_2}(x_3)] \cdot \frac{f_{H_1}(x_1)}{f_{x_3}(x_1)} + [f_{H_1}(x_3)] \cdot f_{x_1}(x_2).$$
This is equal to $0$ because, as described in the previous subcase, $f_{x_1}(x_2) = f_{x_3}(x_1) = f_{x_2}(x_3) = 1$ and $f_{H_1}(x_1) + f_{H_1}(x_2) + f_{H_1}(x_3) = 0$.

\textbf{Case 7:} $F = F_1 \cup (Y - \{y_1, y_2\})$, where $F_1$ is a corank-2 flat of $M_1$ disjoint from $X$.
We first prove:

\begin{claim} \label{claim: triples in M_1}
Let $(H_i, H_j, H_k)$ be a modular triple of hyperplanes of $M_1$ so that $H_i \cap X = \{x_i\}$, $H_j \cap X = \{x_j\}$, and $H_k \cap X = \{x_k\}$.
Then $$[f_{x_j}(x_k)] \cdot f_{H_i}(e) - [f_{x_i}(x_k)] \cdot f_{H_j}(e)
+ [f_{x_i}(x_j)] \cdot f_{H_k}(e) = 0$$
for all $e \in E_1$.
\end{claim}
\begin{proof}
We may assume that $(i,j,k) = (1,2,3)$.
There are constants $c_1,c_2, c_3$ so that 
$$c_1 \cdot f_{H_1}(e) + c_2 \cdot f_{H_2}(e) + c_3 \cdot f_{H_3}(e) = 0$$
for all $e \in E_1$.
By plugging in $e = x_1, x_2, x_3$ and using the assumption that $H_{l} \cap X = \{x_l\}$ implies $f_{H_{l}}(x_m) = f_{x_l}(x_m)$ for all $l,m \in [n]$, we see that 
$$(c_1, c_2, c_3) = (f_{x_2}(x_3), f_{x_3}(x_1), f_{x_1}(x_2))$$ up to multiplication by a scalar. This proves the claim.
\end{proof}

We now consider three subcases.
In the first subcase, $(H, H', H'') = (F \cup \{x_1, y_2\}, F \cup \{x_2, y_1\}, F \cup x_3)$.
We will show that 
$$[f_{x_2}(x_3)] \cdot f_{H}(e) - [f_{x_1}(x_3)] \cdot f_{H'}(e) + [f_{x_1}(x_2)] \cdot f_{H''}(e) = 0$$ 
for all $e \in E$.
When $e \in E_1$, this holds by \autoref{claim: triples in M_1} with $(i,j,k) = (1,2,3)$ and $(H_i, H_j, H_k) = (H, H', H'')$.
When $e = y_1$, using (3) and (4) we have
$$[f_{x_2}(x_3)] \cdot 1 + [f_{x_1}(x_2)] \cdot \frac{-f_{x_2}(x_3)}{f_{x_1}(x_2)} = 0.$$ 

In the second subcase, $(H, H', H'') = (F \cup \{x_1, y_2\}, F \cup x_3, F \cup x_4)$.
We will show  that 
\begin{equation}
\label{eq:show-equal-to-zero-4}
[f_{x_3}(x_4)] \cdot f_{H}(e) - [f_{x_1}(x_4)] \cdot f_{H'}(e) + [f_{x_1}(x_3)]\cdot f_{H''}(e) = 0
\end{equation}
for all $e \in E$.
When $e \in E_1$, this holds by \autoref{claim: triples in M_1} with $(i,j,k) = (1,3,4)$ and $(H_i, H_j, H_k) = (H, H', H'')$.
When $e = y_2$, applying (3') shows that 
$$ - [f_{x_1}(x_4)] \cdot \frac{f_{x_1}(x_3)}{f_{x_1}(x_2)} + [f_{x_1}(x_3)]\cdot \frac{f_{x_1}(x_4)}{f_{x_1}(x_2)} = 0.$$
When $e = y_1$, by applying (3) and (3'), the left-hand side of (\ref{eq:show-equal-to-zero-4}) becomes
$$[f_{x_3}(x_4)] \cdot 1 - [f_{x_1}(x_4)] \cdot \frac{-f_{x_2}(x_3)}{f_{x_1}(x_2)} + [f_{x_1}(x_3)]\cdot \frac{-f_{x_2}(x_4)}{f_{x_1}(x_2)},$$
which is equal to $0$ by \autoref{prop: line hyperplane functions}.

\setcounter{equation}{6}

In the third subcase, $(H, H', H'') = (F \cup x_3, F \cup x_4, F \cup x_5)$. 
We will show that 
\begin{equation}
\label{eq:show-equal-to-zero-5}
[f_{x_4}(x_5)] \cdot f_{H}(e) - [f_{x_3}(x_5)] \cdot f_{H'}(e) + [f_{x_3}(x_4)]\cdot f_{H''}(e) = 0
\end{equation}
for all $e \in E$.
When $e \in E_1$ this holds by \autoref{claim: triples in M_1}
with $(i,j,k) = (3,4,5)$ and $(H_i, H_j, H_k) = (H, H', H'')$.
When $e = y_2$, using (3), the left-hand side of (\ref{eq:show-equal-to-zero-5}) becomes 
$$[f_{x_4}(x_5)] \cdot \frac{f_{x_1}(x_3)}{f_{x_1}(x_2)} - [f_{x_3}(x_5)] \cdot \frac{f_{x_1}(x_4)}{f_{x_1}(x_2)} + [f_{x_3}(x_4)]\cdot \frac{f_{x_1}(x_5)}{f_{x_1}(x_2)},$$
which is equal to $0$ by \autoref{prop: line hyperplane functions}.

These seven cases combine to show that $\cH$ is in fact a modular system of $P$-hyperplane functions for $M$.
So, for any pasture $P$, we have defined a map from $\cX^R_{M_1}(P)$ to $\cX^R_{M}(P)$.
The inverse of the map from $\cX^R_{M}(P)$ to $\cX^R_{M_1}(P)$ is the natural map defined by restriction to $E_1$, which is clearly functorial in $P$. This implies that $M_1$ and $M$ have isomorphic foundations.
\end{proof}

This has the following corollary in the special case that $M_1 \cong U_{2,n}$.

\begin{cor}
For all $n \ge 2$, the matroids $U_{2,n}$ and $\Theta_n$ have isomorphic foundations.    
\end{cor}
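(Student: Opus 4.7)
The plan is to obtain this corollary as an immediate consequence of Theorem~\ref{thm:parallelconnectionwithTheta} by taking $M_1 = U_{2,n}$ and letting $X$ be the full ground set of $U_{2,n}$. In this degenerate case, the hypothesis $M_1|X \cong U_{2,n}$ is tautological.

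The one bookkeeping step is to verify that $P_X(U_{2,n}, \Theta_n) = \Theta_n$ when $X = E(U_{2,n})$. This can be checked directly from the flat characterization of the generalized parallel connection---since $X \subseteq E(\Theta_n)$ and $\Theta_n|X = U_{2,n}$, every flat of $\Theta_n$ automatically restricts to a flat of $U_{2,n}$, so the flats of $P_X(U_{2,n}, \Theta_n)$ are exactly those of $\Theta_n$. Alternatively, one can invoke the rank formula \eqref{eq:rank-of-flats-in-generalized-parallel-connection}, in which the contribution $r_{M_1}(F \cap E_1) - r_{M_1}(F \cap X)$ vanishes identically when $E_1 = X$, leaving $r(F) = r_{\Theta_n}(F)$ and matching ground sets $E_1 \cup E_2 = E(\Theta_n)$.

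Granting this identification, the corollary follows at once: by Theorem~\ref{thm:parallelconnectionwithTheta},
\[
F(\Theta_n) \;=\; F(P_X(U_{2,n}, \Theta_n)) \;\cong\; F(U_{2,n}).
\]
There is no substantive obstacle in this argument: all the real work has been carried out in the proof of Theorem~\ref{thm:parallelconnectionwithTheta}, and the corollary simply extracts the degenerate case where the ``outer'' matroid $M_1$ coincides with its distinguished segment.
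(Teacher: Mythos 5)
Your proof is correct and matches the paper's argument exactly: the paper derives this corollary by noting it is the special case $M_1 \cong U_{2,n}$ of Theorem~\ref{thm:parallelconnectionwithTheta}, which is precisely what you do. Your explicit verification that $P_X(U_{2,n}, \Theta_n) = \Theta_n$ when $X = E(U_{2,n})$ is a correct bookkeeping step that the paper leaves implicit.
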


We next delete $X$ from $P_X(M, \Theta_n)$ and show that this preserves the foundation when $X$ is coindependent in $M$.
We will use the following lemma.

\begin{lemma} \label{lem:pasture surjection}
If $P$ is a finitely generated pasture and $f : P \to P$ is a homomorphism which restricts to a surjection $P^\times \to P^\times$ of multiplicative groups, then $f$ is an isomorphism.
\end{lemma}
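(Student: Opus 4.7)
The plan is a two-step argument: first show that $f$ is a bijection on underlying sets, and then show that its set-theoretic inverse is also a pasture morphism.

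\emph{Bijectivity.} Because $P$ is finitely generated as a pasture, every element of $P^\times$ is a monomial in finitely many generators, so $P^\times$ is a finitely generated abelian group. The restriction $f|_{P^\times}$ is then a surjective endomorphism of a finitely generated abelian group. Such an endomorphism is automatically injective: the chain $\ker(f|_{P^\times}) \subseteq \ker(f|_{P^\times}^2) \subseteq \cdots$ stabilizes by Noetherianness of $\Z$-modules, and a standard diagram chase then gives injectivity. Thus $f|_{P^\times}$ is a group automorphism, and combined with $f(0)=0$ this yields a set-theoretic bijection $f \colon P \to P$.

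\emph{The inverse preserves the nullset.} Let $g$ denote the set-theoretic inverse of $f$. Since $f$ is multiplicative and $f(0)=0$, so is $g$. It remains to check that $g$ sends null triples to null triples, or equivalently that $f(N_P)=N_P$. The inclusion $f(N_P)\subseteq N_P$ is built into $f$ being a pasture morphism, and since the map $\Sym_3(P)\to\Sym_3(P)$ induced by $f$ is a bijection, this inclusion restricts to an injection $f \colon N_P \hookrightarrow N_P$. The issue is to promote this injection to a surjection.

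\emph{Finiteness of orbits.} By axiom (P2) the nullset $N_P$ is stable under the scaling action of $P^\times$ on $\Sym_3(P)$, so it decomposes as a disjoint union of $P^\times$-orbits. Because $f$ is multiplicative and $f|_{P^\times}$ is an automorphism, $f$ permutes the $P^\times$-orbits in $\Sym_3(P)$ and restricts to an injection on the subset of orbits contained in $N_P$. If this subset is finite, an injective self-map of a finite set is a bijection, so $f(N_P)=N_P$ and we are done. The finiteness of the number of $P^\times$-orbits in $N_P$ should follow from the assumption that $P$ is finitely generated, unpacking this as: $P$ admits a presentation with finitely many multiplicative generators and finitely many null relations, so that $N_P$ is generated under scaling by those finitely many relations together with the universal relation $1 + (-1) + 0$.

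The main obstacle is precisely this last point: the reduction rests on the orbit-finiteness of $N_P$, which is where the hypothesis ``finitely generated'' is used in an essential way. The group-theoretic Hopfian argument of the first step is routine; the second step is routine once bijectivity is established; but tying the orbit-finiteness to the definition of a finitely generated pasture requires a careful appeal to the presentation-theoretic framework of Baker--Lorscheid.
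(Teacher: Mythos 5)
Your first step coincides with the paper's: the Hopfian property of finitely generated abelian groups forces $f|_{P^\times}$ to be an automorphism, so $f$ is a bijection on the underlying set, and since $f(N_P)\subseteq N_P$ automatically, the whole content of the lemma is the reverse inclusion $N_P\subseteq f(N_P)$. For that step you depart from the paper, which constructs an auxiliary pasture $P'$ on the same pointed monoid (transporting the nullset through $f^{-1}$) and compares $N_P$ with $N_{P'}$ by chasing homomorphisms; your argument instead runs on counting $P^\times$-orbits in $N_P$.

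The concern you flag at the end is not a loose thread but precisely where the argument breaks. Orbit-finiteness of $N_P$ is \emph{not} a consequence of $P$ being finitely generated (i.e.\ $P^\times$ finitely generated abelian); it is a strictly stronger, finite-presentation hypothesis, and there are finitely generated pastures where it fails and where the conclusion of the lemma fails too. For instance, take $P^\times=\{\pm1\}\times\Z^2$ with $\Z^2$ generated by $s,t$, let $\phi$ be the infinite-order automorphism $s\mapsto t$, $t\mapsto st$, and let $N_P$ consist of the degenerate relation together with the scaling orbits of $1-1+0$ and of $1+\phi^n(s)+\phi^n(t)$ for every $n\ge 0$. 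One checks (P1)--(P3) directly, so this is a legitimate pasture; $N_P$ has infinitely many scaling orbits; and the endomorphism $f$ induced by $\phi$ is bijective, restricts to an automorphism of $P^\times$, and maps $N_P$ into $N_P$ by shifting $n\mapsto n+1$, yet $1+s+t\notin f(N_P)$, so $f^{-1}$ is not a pasture morphism. Thus your injective-self-map-of-a-finite-set step genuinely needs the finitely-many-orbits hypothesis, and that hypothesis does not come for free from what is stated. To rescue the proof you should either make the finite-presentation assumption explicit (it is satisfied by foundations of matroids, which is the only case the paper uses) or replace the orbit count by an argument that establishes $N_P\subseteq f(N_P)$ by other means, along the lines of the paper's $P'$ construction.
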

\begin{proof}
A surjective homomorphism from a finitely generated abelian group to itself is necessarily an isomorphism, cf.~\cite[Proof of Lemma 29.2]{Matsumura80}.
So $f$ is a bijection on underlying sets, and by construction $f(N_P) \subseteq N_P$. It suffices to prove that the map from $P$ to $P$ which 
sends $x \in P$ to $f^{-1}(x) \in P$ is a homomorphism. 

Let $g : P \to P'$ be the homomorphism of pastures induced by the inverse map $f^{-1} : P \to P$, i.e., $P'$ has the same underlying set as $P$, and we define the null set of $P'$ to consist of all formal sums of the form $\sum a_i y_i$ such that $\sum a_i f^{-1}(y_i) \in N_P$.
Then $g \circ f : P \to P'$ is the identity map on underlying sets, and therefore $N_P \subseteq N_{P'}$.
For the reverse containment, suppose $\sum a_i y_i \in N_{P'}$. By definition, there exist $x_i \in P$ such that $f(x_i)=y_i$ and $\sum a_i x_i \in N_P$. Since $f : P \to P$ is a homomorphism, we must have $\sum a_i f(x_i) \in N_P$, which means that $N_{P'} \subseteq N_P$.
\end{proof}

We next describe the homomorphism to which we will apply \autoref{lem:pasture surjection}.
It will be defined using cross ratios; see \autoref{sec:cross ratios} for the relevant definitions.
Let $N$ be a matroid with a coindependent set $X$.
If $\cross{e_1}{e_2}{e_3}{e_4}{J}$ is a cross ratio of $N \del X$, then $\cross{e_1}{e_2}{e_3}{e_4}{J}$ is also a cross ratio of $N$.
It follows from \cite[Proposition 4.9]{Baker-Lorscheid25} that the function $\psi_{N \del X}$ from $F_{N \del X}^{\times}$ to $F_N^{\times}$ that maps $\cross{e_1}{e_2}{e_3}{e_4}{J}$ to $\cross{e_1}{e_2}{e_3}{e_4}{J}$ is a homomorphism.
We next show that in the special case that $N = P_X(M_1, \Theta_n)$ for some matroid $M_1$, this homomorphism is surjective.

\begin{lemma} \label{lem:cross ratio surjection}
    Let $M_1$ be a matroid and let $X \subseteq E(M_1)$ be a coindependent set such that $M_1|X \cong U_{2,n}$ for some $n \ge 2$. 
    Let $M = P_X(M_1, \Theta_n)$, let $M' = M \backslash X$, and let $\psi_{M \del X}$ be the homomorphism from $F_{M'}^\times$ to $F_M^\times$ that maps $\cross{e_1}{e_2}{e_3}{e_4}{J}$ to $\cross{e_1}{e_2}{e_3}{e_4}{J}$. Then $\psi_{M \del X}$ is surjective.
\end{lemma} 
\begin{proof}
Let $E$, $E_1$, and $E_2 = X \cup Y$ be the ground sets of $M$, $M_1$, and $\Theta_n$, respectively.
When $n = 2$, we know that the cosimplification of $M$ is isomorphic to $M_1$ because $\{x_i, y_i\}$ is a series pair of $M$ for $i = 1,2$.
So, by \cite[Corollary 4.10]{Baker-Lorscheid25}, we may assume that $n \ge 3$.
The following claim will allow us to show that two given cross ratios of $M$ are equal.

\begin{claim} \label{claim: identification of cross ratios}
Let $\cross{e_1}{e_2}{e_3}{e_4}{J}$ be a cross ratio of $M$.
\begin{enumerate}
\setlength\itemsep{1.0em}
    \item $\cross{e_1}{e_2}{e_3}{e_4}{J} = \cross{e_2}{e_1}{e_4}{e_3}{J} = \cross{e_3}{e_4}{e_1}{e_2}{J} = \cross{e_4}{e_3}{e_2}{e_1}{J}.$

    \item If $\cl(J) = \cl(J')$, then $\cross{e_1}{e_2}{e_3}{e_4}{J} = \cross{e_1}{e_2}{e_3}{e_4}{J'}$.
    
    \item If $\cl(J \cup e_4) = \cl(J \cup e_4')$, then $\cross{e_1}{e_2}{e_3}{e_4}{J} = \cross{e_1}{e_2}{e_3}{e_4'}{J}$. 
    
    \item If $(Ie_5; e_1, e_2, e_3, e_4)$, $(Ie_3; e_1, e_2, e_4, e_5)$, and $(Ie_4; e_1, e_2, e_5, e_3)$ are all in $\Omega_M$, then 
    \begin{align*}
    \cross{e_1}{e_2}{e_3}{e_4}{Ie_5} \cdot \cross{e_1}{e_2}{e_4}{e_5}{Ie_3} \cdot \cross{e_1}{e_2}{e_5}{e_3}{Ie_4} = 1.
    \end{align*}
\end{enumerate}
\end{claim}
\begin{proof}
Parts (1) and (4) are relations (R$\sigma$) and (R4), respectively, of \cite[Theorem 4.21]{Baker-Lorscheid25}, and parts (2) and (3) are implied by \cite[Corollary 3.7]{Baker-Lorscheid25}.
\end{proof}

Fix a cross ratio $\cross{e_1}{e_2}{e_3}{e_4}{J}$ of $M$, and let $F = \cl(J)$.
We will show that $\cross{e_1}{e_2}{e_3}{e_4}{J}$ is in the image of $\psi_{M\del X}$.
By \autoref{Prop:corank-2 flats of the parallel connection (r(T) = 2)}, there are seven possibilities for $F$, which we consider separately. 
In Cases 1--6 we will show that $\cross{e_1}{e_2}{e_3}{e_4}{J}$ is the image of a cross ratio of $M'$, and in Case 7 we will show that $\cross{e_1}{e_2}{e_3}{e_4}{J}$ is the image of a product of cross ratios of $M'$.
Each hyperplane or corank-2 flat of $\Theta_n$ is associated with a given subset of $[n]$; we will choose this subset explicitly without loss of generality to improve readability.

\textbf{Case 1:} $F = E - \{y_1, y_2, y_3, y_4\}$. 
Then $e_1, e_2, e_3, e_4 \notin X$ and $X$ is spanned in $M$ by $F - X$ because $X$ is coindependent in $M_1$.
Let $J'$ be a basis of $F - X$.
Then $\cross{e_1}{e_2}{e_3}{e_4}{J'} = \cross{e_1}{e_2}{e_3}{e_4}{J}$ by \autoref{claim: identification of cross ratios}~(2) and $J' \cup \{e_1, e_2, e_3, e_4\}$ is disjoint from $X$, so $\psi_{M \del X}$ maps $\cross{e_1}{e_2}{e_3}{e_4}{J'}$ to $\cross{e_1}{e_2}{e_3}{e_4}{J}$.

\textbf{Case 2:} $F = F_1 \cup E_2$, where $F_1$ is a corank-2 flat of $M_1$ that contains $X$.
Then $e_1, e_2, e_3, e_4 \notin X$ and $X$ is spanned in $M$ by $F - X$ because $Y \subseteq F - X$.
Let $J'$ be a basis of $F - X$.
Then $\cross{e_1}{e_2}{e_3}{e_4}{J'} = \cross{e_1}{e_2}{e_3}{e_4}{J}$ by \autoref{claim: identification of cross ratios}~(2) and $J' \cup \{e_1, e_2, e_3, e_4\}$ is disjoint from $X$, so $\psi_{M \del X}$ maps $\cross{e_1}{e_2}{e_3}{e_4}{J'}$ to $\cross{e_1}{e_2}{e_3}{e_4}{J}$.

\textbf{Case 3:} $F = H_1 \cup (Y - \{y_1, y_2, y_3\})$, where $H_1$ is a hyperplane of $M_1$ that contains $X$.
Then $M/J$ has at most two parallel classes (namely, $E_1 - F$ and $E_2 - F$), so $\cross{e_1}{e_2}{e_3}{e_4}{J}$ is degenerate and therefore $\cross{e_1}{e_2}{e_3}{e_4}{J} = 1$ in $F_M^{\times}$.

\textbf{Case 4:} $F = H_1 \cup ((Y - \{y_1, y_2, y_3\}) \cup x_i)$ for some $i \in \{1,4\}$, where $H_1$ is a hyperplane of $M_1$ with $H_1 \cap X = \{x_i\}$.
We separately consider the cases $i = 1$ and $i = 4$.

If $i = 1$, then $y_2$ and $y_3$ are parallel in $M/J$ because $(Y - y_1) \cup x_1$ is a hyperplane of $\Theta_n$.
So $M/J$ has at most three parallel classes: $E_1 - F$, $\{y_1\}$, and $\{y_2, y_3\}$.
Therefore $\cross{e_1}{e_2}{e_3}{e_4}{J}$ is degenerate, so $\cross{e_1}{e_2}{e_3}{e_4}{J} = 1$ in $F_M^{\times}$.

Suppose $i = 4$.
Then $M/J$ has at most four parallel classes: $E_1 - F$, $\{y_1\}$, $\{y_2\}$, and $\{y_3\}$.
We may assume that $\{e_1, e_2, e_3, e_4\}$ contains one element from each of these parallel classes, or else $\cross{e_1}{e_2}{e_3}{e_4}{J}$ is degenerate.
By swapping rows and columns of $\cross{e_1}{e_2}{e_3}{e_4}{J}$, we may assume that $e_4 \in E_1 - F$.
Since $X$ is coindependent in $M_1$, there is some $a \in E_1 - (H_1 \cup X)$.
Then $\cl(J \cup e_4) = \cl(J \cup a)$ because $e_4$ and $a$ are parallel in $M/J$, so by \autoref{claim: identification of cross ratios}~(3) we may assume that $e_4 = a$ and $\{e_1, e_2, e_3\} = \{y_1, y_2, y_3\}$.
Up to re-indexing, we may assume that $e_3 = y_3$, so $\cross{e_1}{e_2}{e_3}{e_4}{J} = \cross{e_1}{e_2}{y_3}{a}{J}$.
Let $I = J - y_4$.
By \autoref{claim: identification of cross ratios}~(4) we have
\begin{align*}
    \cross{e_1}{e_2}{y_3}{a}{Iy_4} \cdot \cross{e_1}{e_2}{a}{y_4}{Iy_3} \cdot \cross{e_1}{e_2}{y_4}{y_3}{Ia} = 1.
    \end{align*}
Since $(Y - y_4) \cup x_4$ is a hyperplane of $\Theta_n$ we see that $y_1$ and $y_2$ are parallel in $M/(I \cup y_3)$.
Then $M/(I \cup y_3)$ has at most three parallel classes (namely $E_1 - \cl(I \cup y_3)$, $\{y_1, y_2\}$, and $\{y_4\}$), so $\cross{e_1}{e_2}{a}{y_4}{Iy_3}$ is degenerate.
Since $\cl(I \cup a) = E - \{y_1, y_2, y_3, y_4\}$, the cross ratio $\cross{e_1}{e_2}{y_4}{y_3}{Ia}$ is the image under $\psi_{M \del X}$ of a cross ratio of $M'$, as proved in Case 1.
Since $\psi_{M\del X}$ is a homomorphism, $\cross{e_1}{e_2}{y_3}{a}{Iy_4}$ is the image under $\psi_{M \del X}$ of the inverse of $\cross{e_1}{e_2}{y_4}{y_3}{Ia}$ in $F_{M'}^{\times}$.

\textbf{Case 5:} $F = F_1 \cup H_2$, where $F_1$ is a corank-2 flat of $M_1$, $H_2$ is a hyperplane of $\Theta_n$, and $F_1 \cap X = H_2 \cap X = \{x_1\}$.
We consider two subcases, depending on the form of $H_2$.

First, suppose that $H_2 = (Y - y_1) \cup x_1$.
Then $F - x_1$ contains a basis $J'$ of $F$ because $Y - y_1$ spans $x_1$ in $\Theta_n$ and therefore in $M$ as well.
Suppose that $e_i \in X$ for some $i \in [4]$.
Since $X$ is contained in a parallel class of $M/J'$, this choice of $i$ is unique, or else $\cross{e_1}{e_2}{e_3}{e_4}{J}$ is degenerate.
By \autoref{claim: identification of cross ratios} (1) we may assume that $i = 4$.
Then $e_4$ and $y_1$ are parallel in $M/J'$, so $\cl(J' \cup e_4) = \cl(J' \cup y_1)$ and therefore $\cross{e_1}{e_2}{e_3}{e_4}{J'} = \cross{e_1}{e_2}{e_3}{y_1}{J'}$ by \autoref{claim: identification of cross ratios}~(3).
Since $J' \cup \{e_1, e_2, e_3, y_1\}$ is disjoint from $X$, we see that $\cross{e_1}{e_2}{e_3}{y_1}{J'}$ is a cross ratio of $M'$ whose image under $\psi_{M \del X}$ is $\cross{e_1}{e_2}{e_3}{e_4}{J}$.

In the second subcase, suppose that $H_2 = (Y - \{y_2, y_3\}) \cup x_1$.
Since $E_2 - H_2$ is contained in a parallel class of $M/J$, at most one of $e_1, e_2, e_3, e_4$ is in $E_2$ or else $\cross{e_1}{e_2}{e_3}{e_4}{J}$ is degenerate.
Suppose that $e_i \in E_2$ for some $i \in [4]$.
By \autoref{claim: identification of cross ratios} we may assume that $i = 4$.
Then $e_4$ and $y_2$ are parallel in $M/J$, so $\cl(J \cup e_4) = \cl(J \cup y_2)$ and therefore $\cross{e_1}{e_2}{e_3}{e_4}{J} = \cross{e_1}{e_2}{e_3}{y_2}{J}$ by \autoref{claim: identification of cross ratios}~(3).
Note that $\{e_1, e_2, e_3\} \subseteq E_1 - X$.
Let $I = J - y_1$.
By \autoref{claim: identification of cross ratios}~(4) we have
\begin{align*}
    \cross{e_1}{e_2}{e_3}{y_2}{Iy_1} \cdot \cross{e_1}{e_2}{y_2}{y_1}{Ie_3} \cdot \cross{e_1}{e_2}{y_1}{e_3}{Iy_2} = 1.
    \end{align*}
Since $(Y - y_1) \cup x_1$ is a hyperplane of $\Theta_n$ we see that $y_2$ and $y_3$ are parallel in $M/(I \cup e_3)$.
Then $M/(I \cup e_3)$ has at most three parallel classes (namely $E_1 - \cl(I \cup e_3)$, $\{y_1\}$, and $\{y_2, y_3\}$), so $\cross{e_1}{e_2}{y_2}{y_1}{Ie_3}$ is degenerate.
Since $\cl(I \cup y_2)$ is a corank-2 flat of $M_1$ consisting of $F_1$ and the hyperplane $(Y - y_1) \cup x_1$ of $\Theta_n$, we know from the first subcase of Case 5 that $\cross{e_1}{e_2}{y_1}{e_3}{Iy_2}$ is the image under $\psi_{M \del X}$ of a cross ratio of $M'$.
Since $\psi_{M \del X}$ is a homomorphism, $\cross{e_1}{e_2}{e_3}{y_2}{Iy_1}$ is the image under $\psi_{M \del X}$ of the inverse of $\cross{e_1}{e_2}{y_1}{e_3}{Iy_2}$ in $F_{M'}^{\times}$.

\textbf{Case 6:} $F = H_1 \cup (Y - \{y_1, y_2, y_3\})$, where $H_1$ is a hyperplane of $M_1$ disjoint from $X$.
Then $M/J$ has at most four nontrivial parallel classes: $E_1 - H_1$, $\{y_1\}$, $\{y_2\}$, and $\{y_3\}$.
We may assume that $\{e_1, e_2, e_3, e_4\}$ contains one element from each of these parallel classes, or else $\cross{e_1}{e_2}{e_3}{e_4}{J}$ is degenerate.
By \autoref{claim: identification of cross ratios} we may assume that $e_4 \in E_1 - H_1$ and $\{e_1, e_2, e_3\} = \{y_1, y_2, y_3\}$.
Let $a \in E_1 - (H_1 \cup X)$; such an element exists because $X$ is coindependent in $M_1$.
Then $\cl(J \cup e_4) = \cl(J \cup a)$ because $e_4$ and $a$ are parallel in $M/J$, so $\cross{e_1}{e_2}{e_3}{e_4}{J} = \cross{e_1}{e_2}{e_3}{a}{J}$ by \autoref{claim: identification of cross ratios}~(3).
Since $J \cup \{e_1, e_2, e_3, a\}$ is disjoint from $X$ we see that $\cross{e_1}{e_2}{e_3}{a}{J}$ is a cross ratio of $M'$ whose image under $\psi_{M \del X}$ is $\cross{e_1}{e_2}{e_3}{e_4}{J}$.

\textbf{Case 7:} $F = F_1 \cup (Y - \{y_1, y_2\})$, where $F_1$ is a corank-2 flat of $M_1$ disjoint from $X$.
Note that $\{x_1, y_2\}$ and $\{x_2, y_1\}$ are parallel pairs in $M/J$.
Let $k = |\{e_1, e_2, e_3, e_4\} \cap X|$.
We will proceed by induction on $k$ to show that every cross ratio $\cross{e_1}{e_2}{e_3}{e_4}{J}$ with $\cl(J) = F$ is in the image of $\psi_{M \del X}$.
If $k = 0$, then $\cross{e_1}{e_2}{e_3}{e_4}{J}$ is a cross ratio of $M'$ whose image under $\psi_{M \del X}$ is $\cross{e_1}{e_2}{e_3}{e_4}{J}$.
So we may assume that $k \ge 1$, so $e_i = x_j$ for some $i \in [4]$ and $j \in [n]$.
By \autoref{claim: identification of cross ratios} we may assume that $i = 4$.
If $j = 1$, then since $\cl(J \cup x_1) = \cl(J \cup y_2)$ because $\{x_1, y_2\}$
is a parallel pair of $M/J$, by \autoref{claim: identification of cross ratios}~(3) we see that $\cross{e_1}{e_2}{e_3}{e_4}{J} = \cross{e_1}{e_2}{e_3}{y_2}{J}$.
By induction, $\cross{e_1}{e_2}{e_3}{y_2}{J}$ is in the image of $\psi_{M \del X}$, and therefore so is $\cross{e_1}{e_2}{e_3}{e_4}{J}$.
So we may assume that $j \ne 1$, and by similar reasoning, that $j \ne 2$.
Without loss of generality, we may assume that $j = 3$, so $e_4 = x_3$.
Let $I = J - y_3$.
By \autoref{claim: identification of cross ratios}~(4) we have
\begin{align*}
    \cross{e_1}{e_2}{e_3}{x_3}{Iy_3} \cdot \cross{e_1}{e_2}{x_3}{y_3}{Ie_3} \cdot \cross{e_1}{e_2}{y_3}{e_3}{Ix_3} = 1.
    \end{align*}
Since $\cl(I \cup x_3)$ is a corank-2 flat of the form considered in Case 4, we know that $\cross{e_1}{e_2}{y_3}{e_3}{Ix_3}$ is in the image of $\psi_{M\del X}$.
We next show that $\cross{e_1}{e_2}{x_3}{y_3}{Ie_3}$ is in the image of $\psi_{M \del X}$ by considering three possibilities for $e_3$.
If $\cl(I \cup e_3) \cap X \ne \varnothing$ (in particular, if $e_3 \in X$), then $\cl(I\cup e_3)$ is a corank-2 flat of the form considered in Case 4, so $\cross{e_1}{e_2}{x_3}{y_3}{Ie_3}$ is in the image of $\psi_{M \del X}$.
If $e_3 \in \{y_1, y_2\}$, then without loss of generality we may assume that $e_3 = y_1$.
Then $y_2$ and $y_3$ are parallel in $M/J$ because of the hyperplane $(Y - y_3) \cup x_3$ of $\Theta_n$.
So $\cl(I \cup e_3 \cup x_3) = \cl(I \cup e_3 \cup y_2)$, so \autoref{claim: identification of cross ratios}~(3) implies that $\cross{e_1}{e_2}{x_3}{y_3}{Ie_3} = \cross{e_1}{e_2}{y_2}{y_3}{Ie_3}$.
By induction, $\cross{e_1}{e_2}{y_2}{y_3}{Ie_3}$ is in the image of $\psi_{M \del X}$, and therefore so is $\cross{e_1}{e_2}{x_3}{y_3}{Ie_3}$.
Finally, if $e_3 \in E_1 - X$ and $\cl(I\cup e_3)$ is disjoint from $X$, then $\cl(I \cup e_3)$ is a corank-2 flat of the form considered in Case 6 and is therefore in the image of $\psi_{M \del X}$.
Therefore, since $\psi_{M \del X}$ is a homomorphism, $\cross{e_1}{e_2}{e_3}{x_3}{Iy_3}$ is the image under $\psi_{M \del X}$ of the product of the inverses of $\cross{e_1}{e_2}{x_3}{y_3}{Ie_3}$ and $\cross{e_1}{e_2}{y_3}{e_3}{Ix_3}$ in $F_{M'}^{\times}$.
\end{proof}

We can now prove the main result of this section. 

\begin{thm} \label{thm: segment-cosegment exchange}
Let $M$ be a matroid and let $X \subseteq E(M)$ be a coindependent set so that $M|X \cong U_{2,n}$.
Then the foundation of the segment-cosegment exchange of $M$ along $X$ is isomorphic to the foundation of $M$.
\end{thm}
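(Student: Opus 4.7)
The plan is to combine \autoref{thm:parallelconnectionwithTheta} (which gives $F(M)\cong F(N)$ for $N=P_X(M,\Theta_n)$), \autoref{lem:cross ratio surjection} (a units-surjection $F(M')\to F(N)$ where $M'=N\setminus X$), and \autoref{lem:pasture surjection} (a self-surjection on units of a finitely generated pasture is an isomorphism), together with matroid duality, to produce pasture homomorphisms in both directions between $F(M)$ and $F(M')$ that are surjective on units, and then to upgrade them to isomorphisms.

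First I would compose the units-surjection of \autoref{lem:cross ratio surjection} with the inverse of the isomorphism $F(M)\cong F(N)$ from \autoref{thm:parallelconnectionwithTheta} to obtain a pasture homomorphism $\phi\colon F(M')\to F(M)$ which is surjective on units. To produce a map in the opposite direction, I would apply the same strategy after dualizing. Since $Y$ is a basis of $\Theta_n$ and hence independent in $M'$, $Y$ is co-independent in $M'^*$; and $M'^*|Y\cong U_{2,n}$ since $Y$ is a cosegment of $M'$. A standard fact about segment-cosegment exchange---that it is inverted by the cosegment-segment exchange of \cite{Oxley-Semple-Vertigan}---translates via duality into $P_Y(M'^*,\Theta_n)\setminus Y\cong M^*$. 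Running the previous argument with the roles of $M$ and $X$ replaced by $M'^*$ and $Y$, and invoking the canonical duality isomorphism $F(L)\cong F(L^*)$ for every matroid $L$ (cf.~\cite{Baker-Lorscheid20}), yields a pasture homomorphism $\psi\colon F(M)\to F(M')$ which is also surjective on units.

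Finally, the compositions $\psi\circ\phi\colon F(M')\to F(M')$ and $\phi\circ\psi\colon F(M)\to F(M)$ are self-endomorphisms of finitely generated pastures (foundations are finitely generated, with multiplicative groups generated by the finitely many universal cross ratios) which are surjective on units, hence both are isomorphisms by \autoref{lem:pasture surjection}. Consequently $\phi$ has both a left and a right inverse, so $\phi$ is itself an isomorphism, and $F(M')\cong F(M)$ as desired.

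The main obstacle will be the duality bookkeeping: verifying that under the canonical identifications $F(M)\cong F(M^*)$ and $F(M')\cong F(M'^*)$ the composite $\psi\circ\phi$ is genuinely an endomorphism of $F(M')$ (rather than a map between isomorphic pastures), so that \autoref{lem:pasture surjection} can be applied directly. Once this compatibility is in place, the three ingredients assemble to the theorem.
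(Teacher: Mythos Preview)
Your proposal is correct and takes essentially the same approach as the paper: the paper assembles the identical ingredients into a single chain $F(M)\cong F(M^*)\to F(P')\cong F((M')^*)\cong F(M')\to F(P)\cong F(M)$ and applies \autoref{lem:pasture surjection} once to the composite, whereas you split the chain into $\phi$ and $\psi$ and apply the lemma to both $\psi\circ\phi$ and $\phi\circ\psi$. Your stated obstacle is not a genuine one: since $\phi\colon F(M')\to F(M)$ and $\psi\colon F(M)\to F(M')$ are specific pasture homomorphisms, the composite $\psi\circ\phi$ is literally an endomorphism of $F(M')$, so \autoref{lem:pasture surjection} applies directly with no further bookkeeping required.
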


\begin{proof}
Following \cite{Oxley06}, for a matroid $N$ with $X \subseteq E(N)$ so that $X$ is coindependent and $N|X \cong U_{2,n}$, we write $\Delta_X(N)$ for $P_X(N, \Theta_n) \del X$, the segment-cosegment exchange of $N$ along $X$.
(We do not follow the convention from \cite{Oxley06} of relabeling $Y$ with $X$ in $\Delta_X(N)$ via the natural isomorphism from $\Theta_n$ to $\Theta_n^*$ that swaps $x_i$ and $y_i$ for each $i \in [n]$.)
Dually, if $N^*|Y \cong U_{2,n}$ then we write $\nabla_Y(N)$ for $(\Delta_Y(N^*))^*$, the cosegment-segment exchange of $N$ along $Y$.

Let $P = P_X(M, \Theta_n)$ and let $M' = P_X(M, \Theta_n)\backslash X$.
By \cite[Lemma 11.5.6]{Oxley06} we know that $((M')^*|Y) \cong U_{2, n}$, so let $P' = P_Y((M')^*, \Theta_n^*)$.
By \cite[Proposition 11.5.11 (i)]{Oxley06} we know that $\nabla_Y(\Delta_X(M)) = M$.
Taking the dual of both sides, we see that $\Delta_Y((M')^*) = M^*$, so $P' \del Y = M^*$.
It follows from \autoref{thm:parallelconnectionwithTheta}
that we have isomorphisms $F_M \to F_P$ and $F_{(M')^*} \to F_{P'}$, and since $M^* = P' \del Y$ and $M' = P \del X$ it follows from \cite[Proposition 4.9]{Baker-Lorscheid25} that we have homomorphisms $F_{M^*} \to F_{P'}$ and $F_{M'} \to F_P$.
Hence, we have the following diagram of homomorphisms of pastures:
\renewcommand{\theequation}{\alph{equation}}
\setcounter{equation}{0}
\begin{equation} \label{eq:long diagram}
F_M \overset{\cong}{\to} F_{M^*} \to F_{P'} \overset{\cong}{\to} F_{(M')^*} \overset{\cong}{\to} F_{M'} \to F_P \overset{\cong}{\to} F_M.
\end{equation}
Here, the maps $F_M \to F_{M^*}$ and $F_{(M')^*} \to F_{M'}$ are the natural isomorphisms given by \cite[Proposition 4.8]{Baker-Lorscheid25},
and the maps $F_{P'} \to F_{(M')^*}$ and $F_P \to F_M$ are the inverses of the isomorphisms $F_M \to F_P$ and $F_{(M')^*} \to F_{P'}$.

By \autoref{lem:cross ratio surjection}, the homomorphisms $F_{M^*} \to F_{P'}$ and $F_{M'} \to F_P$ restrict to surjective homomorphisms of multiplicative groups.
It follows that the composition of the maps in \eqref{eq:long diagram} induces a surjection of multiplicative groups.
By \autoref{lem:pasture surjection}, we conclude that the composite map is an isomorphism, which means that all the intermediate maps must be isomorphisms as well. In particular, $F_{M'} \cong F_P$. 
On the other hand, we know from \autoref{thm:parallelconnectionwithTheta} that $F_P \cong F_M$, and thus $F_{M'} \cong F_M$ as desired.
\end{proof}

We have the following corollary in the case that $n = 3$.

\begin{thm}
Let $M$ be a matroid and let $T \subseteq E(M)$ be a coindependent triangle.
Then the foundation of the Delta-Wye exchange of $M$ along $T$ is isomorphic to the foundation of $M$.
\end{thm}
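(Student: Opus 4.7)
The plan is to deduce this theorem as an immediate specialization of \autoref{thm: segment-cosegment exchange}. First, I would observe that a co-independent triangle $T$ of $M$ is by definition a co-independent 3-element circuit, and so $M|T \cong U_{2,3}$. Thus the hypotheses of \autoref{thm: segment-cosegment exchange} are satisfied with $X = T$ and $n = 3$.

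Second, I would recall from the introduction (the definition of segment-cosegment exchange) that $\Theta_3 \cong M(K_4)$, and that under this identification the segment-cosegment exchange of $M$ along $T$, namely $P_T(M, \Theta_3) \setminus T$, coincides with the Delta-Wye exchange $\Delta_T(M) = P_T(M, M(K_4)) \setminus T$. Therefore the theorem is literally the $n = 3$ case of \autoref{thm: segment-cosegment exchange}, and the conclusion $F(\Delta_T(M)) \cong F(M)$ follows directly.

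Since there is no additional content beyond this translation of terminology, there is no real obstacle; the only thing to be a bit careful about is simply confirming that the notion of ``co-independent triangle'' matches the hypothesis ``co-independent set $X$ with $M|X \cong U_{2,n}$'' for $n = 3$, which it does by definition of a triangle as a 3-element circuit.
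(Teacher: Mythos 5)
Your proposal is correct and matches the paper's own treatment: the paper states this result as an immediate corollary of \autoref{thm: segment-cosegment exchange} in the case $n = 3$, relying on the identification $\Theta_3 \cong M(K_4)$ and the fact that a co-independent triangle $T$ satisfies $M|T \cong U_{2,3}$. Nothing more is needed.
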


\begin{rem}
Note that if we replace the foundation by the universal pasture in the statement of \autoref{thm: segment-cosegment exchange}, the result remains true. This follows formally from Corollary 7.14 and Remark 7.15 of \cite{Baker-Lorscheid21b} upon noting that there is a bijection between connected components of $M$ and connected components of the segment-cosegment exchange of $M$ along $X$; see \autoref{lem: components of a segment-cosegment exchange} below for a straightforward proof of this fact.
\end{rem}

\begin{lemma} \label{lem: components of a segment-cosegment exchange}
If $M$ is a matroid with $X \subseteq E(M)$ so that $X$ is coindependent and $M|X \cong U_{2,n}$ for some $n \ge 2$, then there is a bijection between the connected components of $M$ and the connected components of the segment-cosegment exchange $P_X(M, \Theta_n) \del X$.
\end{lemma}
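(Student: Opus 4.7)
My plan is to match up components on each side. First, if $C$ is a component of $M$ with $C \cap X = \varnothing$, then the decomposition $M = M|C \oplus M|(E(M) - C)$ is preserved by the generalized parallel connection (since $X$ lies in the second summand) and by deletion of $X$, so $C$ appears unchanged as a component of $M''$. This reduces the lemma to showing that if $M$ is connected then $M'' := P_X(M, \Theta_n) \setminus X$ is connected. The case $n = 2$ is handled by noting that in $\Theta_2$ each $\{x_i, y_i\}$ is a parallel pair, so $M'' \cong M$ via $x_i \mapsto y_i$. So assume $n \ge 3$ and $M$ is connected, and write $\bar M := P_X(M, \Theta_n)$.

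To prove $M''$ is connected I would first show that $Y$ lies in a single component of $M''$. Co-independence of $X$ gives a basis $B^* \subseteq E(M) - X$ of $M$, and for each $x_i \in X$ the fundamental circuit $C_i$ of $x_i$ with respect to $B^*$ satisfies $C_i \cap X = \{x_i\}$. In $\bar M$ the two circuits $C_i$ and $\{x_i\} \cup (Y - y_i)$ meet only in $\{x_i\}$. A closure computation in $\bar M$ for $S_i := (C_i - x_i) \cup (Y - y_i)$, using \autoref{prop:rank function of the parallel connection} to separate the contributions from $E(M)$ and $E(\Theta_n)$, yields rank $|C_i| + n - 3$ and hence corank $1$ (the same answer arises whether or not $\cl_M(C_i)$ contains elements of $X$ beyond $\{x_i\}$, since in that case $\cl_M(C_i) \supseteq X$ and the $\Theta_n$-contribution is upgraded from rank $n-1$ to $n$ while the $X$-correction grows from $1$ to $2$). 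Consequently $S_i$ supports a unique circuit, which by strong circuit elimination must contain every element of $S_i$, so $S_i$ is itself a circuit of $\bar M$. Being disjoint from $X$, it is a circuit of $M''$ containing all of $Y - y_i$. For any two $y_j, y_k \in Y$, choose $i \in [n] \setminus \{j, k\}$ (possible because $n \ge 3$) to obtain a common circuit of $M''$ through $y_j$ and $y_k$.

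Next I would show that every $e \in E(M) - X$ lies in the same component of $M''$ as some element of $Y$. Connectedness of $M$ provides a circuit $C$ of $M$ with $e \in C$ meeting $X$, and $|C \cap X| \le 2$ since three elements of $C \cap X$ would form a triangle strictly inside $C$. If $|C \cap X| = 1$, the argument of the previous paragraph applied to $C$ in place of $C_i$ produces a circuit of $M''$ containing $e$ and all of $Y - y_i$. If $|C \cap X| = 2$ with $C \cap X = \{x_i, x_j\}$, the same corank-$1$ computation identifies $D := (C - x_i) \cup (Y - y_i)$ as a circuit of $\bar M$ containing $x_j$; a second strong circuit elimination of $D$ with the $\Theta_n$-circuit $\{x_j\} \cup (Y - y_j)$, eliminating $x_j$ and tracking the element $e$, yields a circuit $D'$ of $\bar M$ with $e \in D' \subseteq (C - \{x_i, x_j\}) \cup Y$. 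Since $D' \cap X = \varnothing$, it is a circuit of $M''$, and since $C - \{x_i, x_j\}$ is independent in $M$ we must have $D' \cap Y \ne \varnothing$, so $e$ lies in a common circuit of $M''$ with some $y_l$.

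Combining these two steps shows $M''$ is connected, completing the bijection. The main technical hurdle is the corank-$1$ computation for $S_i$ and the analogous set in the second case, which relies on the flat rank formula for the generalized parallel connection together with the short case analysis of $\cl_M(C) \cap X$ indicated above.
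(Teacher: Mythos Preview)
Your argument is correct and gives a genuinely different, more self-contained proof than the paper's. The paper disposes of the connected case by citing an exercise in Oxley's book (\cite[p.~456, Ex.~6]{Oxley06}) and then reduces the general case to this via a direct-sum decomposition of the generalized parallel connection, verified at the level of flats. You instead prove the connected case from scratch by manufacturing circuits of $M''$ through strong circuit elimination together with a corank-$1$ computation in $\bar M$. Your approach avoids the external citation and makes the combinatorics completely explicit; the paper's approach is shorter but leans on a known fact.

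Two small remarks. First, in the $n=2$ case your labeling is off: in $\Theta_2$ the parallel pairs are $\{x_1,y_2\}$ and $\{x_2,y_1\}$ (equivalently, $\{x_i,y_i\}$ is a basis), so the isomorphism $M''\cong M$ is via $x_i\mapsto y_{3-i}$ rather than $x_i\mapsto y_i$. This does not affect the conclusion. Second, the rank formula you invoke from \autoref{prop:rank function of the parallel connection} is stated only for flats, so the ``closure computation'' you allude to really is needed: one must pass to $\cl_{\bar M}(S_i)$ before applying the formula, exactly as your case split on $\cl_M(C_i)\cap X$ indicates. In Case~B (when $\cl_M(C_i)\supseteq X$) the closure of $S_i$ picks up $y_i$ as well, but the rank still comes out to $|C_i|+n-3$ since the $\Theta_n$-contribution increases from $n-1$ to $n$ while the $X$-correction increases from $1$ to $2$, as you note. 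With those details filled in, the corank-$1$ claim goes through, and the uniqueness of the circuit in $S_i$ together with strong circuit elimination (applied symmetrically to elements of $C_i-x_i$ and of $Y-y_i$) forces the unique circuit to be all of $S_i$.
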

\begin{proof}
If $n = 2$, then $M$ and $P_X(M, \Theta_n) \del X$ are isomorphic because $\{x_i, y_i\}$ is a series pair for $i = 1,2$, so we may assume that $n \ge 3$.
If $M$ is connected, then $P_X(M, \Theta_n) \del X$ is connected by \cite[pg. 456, Ex. 6]{Oxley06} and the result follows, so we may assume that $M$ is disconnected.
Since $n \ge 3$ we know that $M|X$ is connected, and therefore $X$ is contained some component of $M$.
So $M = M_1 \oplus M_2$ where $M_1$ is connected and $X \subseteq E(M_1)$ (and $M_2$ may or may not be connected).

We will first show that $P_X(M, \Theta_n) = P_X(M_1, \Theta_n) \oplus M_2$.
Let $E$, $E_1$, and $E_2$ be the ground sets of $M$, $M_1$, and $M_2$, respectively.
For a matroid $N$ we write $\cF(N)$ for the set of flats of $N$.
Then
\begin{align*}
\cF(P_X(M, \Theta_n)) &= \{F \subseteq E  \cup Y \mid \textrm{$F \cap E \in \cF(M)$ and $F \cap (X \cup Y) \in \cF(\Theta_n)$}\} \\
&= \{F \subseteq E  \cup Y \mid \textrm{$F \cap E_i \in \cF(M_i)$ for $i = 1,2$ and $F \cap (X \cup Y) \in \cF(\Theta_n)$}\}\\
&= \{F \subseteq E  \cup Y \mid \textrm{$F \cap (E_1 \cup X \cup Y) \in \cF(P_X(M_1, \Theta_n))$ and $F \cap E_2 \in \cF(M_2)$}\}\\
&= \cF(P_X(M_1, \Theta_n) \oplus M_2).
\end{align*}
Here, the first and third lines follow from the definition of generalized parallel connection, and the second and fourth lines follow from the characterization of flats of a direct sum \cite[Proposition 4.2.16]{Oxley06}.
Therefore $P_X(M, \Theta_n) = P_X(M_1, \Theta_n) \oplus M_2$, and it follows from \cite[Proposition 4.2.19]{Oxley06} that $P_X(M, \Theta_n) \del X = (P_X(M_1, \Theta_n)\del X) \oplus M_2$.
Since $P_X(M_1, \Theta_n) \del X$ is connected by \cite[pg. 456, Ex. 6]{Oxley06}, it follows that the components of $P_X(M, \Theta_n) \del X$ are precisely $(E_1 - X) \cup Y$ and the components of $M_2$.
This gives a bijection between the components of $M$ and the components of $P_X(M, \Theta_n) \del X$ in which $E_1$ maps to $(E_1 - X) \cup Y$ and every other component of $M$ maps to itself.
\end{proof}

We turn to the proof of Corollary~\ref{corD} from the Introduction, whose statement we now recall:

\begin{cor}\label{cor: foundations for segment-cosegment exchange}
    Let $P$ be a pasture, and let $M$ be an excluded minor for representability over $P$. Then every segment-cosegment exchange of $M$ is also an excluded minor for representability over $P$.
\end{cor}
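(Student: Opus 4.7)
The plan is to verify the two defining properties of an excluded minor directly: (a) $M'$ is not representable over $P$, and (b) every single-element deletion and contraction of $M'$ is representable over $P$.

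Property (a) is immediate from \autoref{thm: segment-cosegment exchange}. That theorem gives $F(M') \cong F(M)$, so $\Hom(F(M'), P) \cong \Hom(F(M), P)$; representability of a matroid $N$ over $P$ is equivalent to $\Hom(F(N), P)$ being non-empty, and by hypothesis this set is empty for $M$.

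For (b), write $M' = P_X(M, \Theta_n) \setminus X$ with $Y = \{y_1, \dots, y_n\}$ the cosegment coming from $\Theta_n$, and fix an element $e \in E(M') = (E(M) \setminus X) \cup Y$. I would split into two cases. If $e \in E(M) \setminus X$, then a direct verification using the flat descriptions in \autoref{prop:hyperplanes of the parallel connection} and \autoref{Prop:hyperplanes of the parallel connection (r(T) = 2)} (or equivalently the characterization of minors of a generalized parallel connection) shows that both $M' \setminus e$ and $M' / e$ are themselves segment-cosegment exchanges along $X$ of $M \setminus e$ and $M / e$, respectively. Here one needs $X$ to remain a co-independent $U_{2,n}$-restriction in $M \setminus e$ and $M / e$, which can be arranged after passing to the appropriate (co)simplification by \cite[Corollary 4.9]{Baker-Lorscheid20}. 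Since $M$ is an excluded minor, $M \setminus e$ and $M / e$ are $P$-representable, and \autoref{thm: segment-cosegment exchange} then implies that $M' \setminus e$ and $M' / e$ are $P$-representable as well.

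If instead $e = y_i \in Y$, one uses the structure of $\Theta_n$ (\autoref{prop: theta hyperplanes} and \autoref{prop: theta corank-2 flats}) together with the duality noted in the proof of \autoref{thm: segment-cosegment exchange} (i.e., that $M$ can be recovered as a cosegment-segment exchange of $M'$ along $Y$) to identify $M' \setminus y_i$ and $M' / y_i$ as segment-cosegment exchanges of single-element minors of $M$ along a $U_{2,n-1}$-restriction obtained from $X$. Each such matroid is the segment-cosegment exchange of a proper minor of the excluded minor $M$, hence is $P$-representable by combining the excluded-minor hypothesis with \autoref{thm: segment-cosegment exchange}. The main obstacle is the bookkeeping in this second case: one must match $M' \setminus y_i$ and $M' / y_i$ precisely with the correct minor of $M$ (and the correct reduced segment-cosegment exchange), and verify that the co-independence hypothesis on $X$ passes to this minor. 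This is a case analysis carried out cleanly via the hyperplane and corank-$2$ flat descriptions established earlier, and is the analogue for arbitrary pastures of the argument in \cite[Theorem 1.1]{Oxley-Semple-Vertigan} for partial fields.
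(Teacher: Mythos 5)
Your outline is on the right track at a high level — verify $M'$ is not $P$-representable via \autoref{thm: segment-cosegment exchange}, then show every single-element minor of $M'$ is $P$-representable — but your reduction of each single-element minor to a segment-cosegment exchange of a single-element minor of $M$ breaks down in two places, and one of them is a genuine missing idea.

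The serious gap is in the case $e \in E(M) \setminus X$, contraction. You assert that $M'/e$ is a segment-cosegment exchange of $M/e$ along $X$, possibly after cosimplification. This fails when $e \in \cl_M(X)$: then $r_{M/e}(X) = 1$, so $M/e|X$ is not $U_{2,n}$ and no amount of (co)simplification restores the segment structure — the hypothesis of \autoref{thm: segment-cosegment exchange} simply is not met. The paper handles this case separately: since $M$ is simple, $M|(X\cup e)\cong U_{2,n+1}$, so $U_{2,n+1}$ and hence its dual $U_{n-1,n+1}$ are $P$-representable, and by \cite[Lemma 2.15]{Oxley-Semple-Vertigan} the matroid $M'/e$ is a $2$-sum of $M/e\setminus(X-x_i)$ with a copy of $U_{n-1,n+1}$. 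One then invokes \autoref{thmB} (the $2$-sum theorem), not \autoref{thm: segment-cosegment exchange}, to conclude $P$-representability. Your proposal never uses \autoref{thmB}, so it cannot recover this case.

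The second, smaller issue is the case $e = y_i$, deletion. It is not true that $M'\setminus y_i$ is itself a segment-cosegment exchange of a single-element minor of $M$; the clean identification along those lines holds only for the contraction $M'/y_i$ (which is the segment-cosegment exchange of $M\setminus x_i$ along $X-x_i$, by \cite[Lemma 2.13]{Oxley-Semple-Vertigan}). For the deletion, the paper instead observes that $Y-y_i$ is contained in a series class of $M'\setminus y_i$, so by \cite[Corollary 4.9]{Baker-Lorscheid20} the foundation of $M'\setminus y_i$ agrees with that of its cosimplification, which in turn is a minor of $M'/y_j$ for $j\ne i$, already shown $P$-representable. Your description (``identify $M'\setminus y_i$ and $M'/y_i$ as segment-cosegment exchanges of single-element minors of $M$'') conflates the two and would need to be replaced by this series-class argument. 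Both of these points need to be filled in before the proof is complete.
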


\begin{proof}
    Let $M$ be an excluded minor for $P$-representability, so $M$ is not $P$-representable, but every proper minor of $M$ is $P$-representable.
    In particular, it follows from \cite[Lemma 4.10]{Baker-Lorscheid25} that $M$ is simple and cosimple.
    Let $M|X \cong U_{2,n}$ for some $n \ge 2$ so that $X$ is coindependent in $M$, and let $M'$ be the segment-cosegment exchange of $M$ on $X$.
    It follows from \autoref{thm: segment-cosegment exchange} that $M'$ is not $P$-representable, so it suffices to show that every proper minor of $M'$ is $P$-representable.
    If $n = 2$, then $M' \cong M$ and the result holds, so we may assume that $n \ge 3$.
    Let $e \in E(M')$.
    We consider two cases.
    First suppose that $e = y_i$ for some $i \in [n]$.
    By \cite[Lemma 2.13]{Oxley-Semple-Vertigan} we know that $M'/y_i$ is isomorphic to the segment-cosegment exchange of $M \backslash x_i$ along $X - x_i$.
    Since $M \backslash x_i$ is $P$-representable, it follows from \autoref{thm: segment-cosegment exchange} that $M'/y_i$ is also $P$-representable.
    In $M' \backslash y_i$, the set $Y - y_i$ is contained in a series class because $M' |Y \cong U_{2,n}$.
    By \cite[Lemma 4.10]{Baker-Lorscheid25}, the cosimplification of $M' \backslash y_i$ has foundation isomorphic to the foundation of $M' \backslash y_i$.
    Since the cosimplification of $M' \backslash y_i$ is a minor of $M'/y_j$ for some $j \ne i$, it follows that $M' \backslash y_i$ is $P$-representable.

    Next suppose that $e \notin Y$.
    Then $M' \backslash e = P_X(M \backslash e, \Theta_n)\backslash X$ by \cite[Proposition~11.4.14~(iv)]{Oxley06}, and since $M \backslash e$ is $P$-representable it follows from \autoref{thm: segment-cosegment exchange} that $M' \backslash e$ is $P$-representable.
    It remains to show that $M' /e$ is $P$-representable.
    If $e$ is not spanned by $X$ in $M$, then by \cite[Lemma 2.16]{Oxley-Semple-Vertigan} we know that $M'/e$ is isomorphic to the segment-cosegment exchange of $M/e$ along $X$, and it follows from \autoref{thm: segment-cosegment exchange} that $M'/e$ is $P$-representable.
    So we may assume that $e$ is spanned by $X$ in $M$.
    Then $M|(X \cup e) \cong U_{2, n+1}$ because $M$ is simple, so $U_{2, n+1}$ is $P$-representable, and therefore $U_{n-1, n+1}$ is $P$-representable by \cite[Proposition 4.8]{Baker-Lorscheid25}.
    By \cite[Lemma 2.15]{Oxley-Semple-Vertigan} we know that $M'/e$ is isomorphic to the 2-sum of $M/e \backslash (X - x_i)$ and a copy of $U_{n-1, n+1}$ for some $i \in [n]$.
    Since both of these matroids are $P$-representable, it follows from \autoref{thmB} that $M'/e$ is $P$-representable.
\end{proof}

\subsection{Application to a conjecture by Pendavingh and van Zwam}
\label{sec:Pvz}

In this final section, we turn to the proof of Corollary~\ref{corC}. As preparation, we recall that the universal partial field $\P_M$ of a representable matroid $M$ is determined by its foundation $F_M$. 

According to \cite[Lemma 2.14]{Baker-Lorscheid25b}, for every pasture $P$ that maps to some partial field $F$, there is a universal map $\pi_P:P\to\Pi P$ to a partial field $\Pi P$ such every other map $f:P\to F$ to a partial field $F$ factors uniquely through $\pi_P$. 

The partial field $\Pi P$ is defined as follows: let $I$ be the ideal of the group ring $\Z[P^\times]$ which is generated by all terms $a+b+c$ that appear in the null set $N_P$. Then $\Pi P$ is the partial field $(P^\times, \Z[P^\times]/I)$; as a pasture, it can be described as
\[
 \Pi P \ = \ \past{P}{\gen{a+b+c\mid a+b+c\in I}}.
\]
The pasture morphism $\pi_P:P\to\Pi P$ is the quotient map. Note that since $P$ maps to some partial field, $I$ is a proper ideal of $\Z[P^\times]$ and thus $\Pi P$ is indeed a partial field (since $1\neq0$).

If $P=F_M$ is the foundation of a representable matroid $M$, its universal partial field is $\P_M=\Pi F_M$. This follows at once from a comparison of the universal properties of $\Pi F_M$ and $\P_M$: either of these partial fields represents the functor that associates with a partial field $F$ the set of rescaling classes of $M$ over $F$.

\begin{cor}\label{cor: universal partial field for segment-cosegment exchange}
 Let $M$ be a matroid, let $X \subseteq E(M)$ so that $X$ is coindependent and $M|X \cong U_{2,n}$ for some $n \ge 2$, and assume that $M$ is representable over some partial field. Then the universal partial field of the segment-cosegment exchange of $M$ along $X$ is isomorphic to the universal partial field of $M$.
\end{cor}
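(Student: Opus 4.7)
The plan is to leverage the preparatory discussion immediately preceding the corollary, which exhibits the universal partial field of a representable matroid $M$ as $\P_M \cong \Pi F_M$, where $\Pi$ is the functor from pastures (that admit at least one map to a partial field) to partial fields described via the ideal $I \subseteq \Z[P^\times]$ generated by the nullset relations. Since $\Pi$ is defined purely functorially in terms of the pasture structure, any isomorphism of pastures $F \cong F'$ induces an isomorphism $\Pi F \cong \Pi F'$.

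First I would unpack the assumption: since $M$ is representable over some partial field $F$, there is a pasture homomorphism $F_M \to F$. Let $M'$ denote the segment-cosegment exchange of $M$ along $X$. By \autoref{thm: segment-cosegment exchange}, the foundations $F_M$ and $F_{M'}$ are isomorphic as pastures, so composing this isomorphism with the map $F_M \to F$ yields a pasture homomorphism $F_{M'} \to F$. In particular, $M'$ is also representable over $F$, so its universal partial field $\P_{M'}$ is defined, and the identity $\P_{M'} \cong \Pi F_{M'}$ holds.

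Applying the functor $\Pi$ to the isomorphism $F_{M'} \cong F_M$ provided by \autoref{thm: segment-cosegment exchange}, I then conclude
\[
 \P_{M'} \ \cong \ \Pi F_{M'} \ \cong \ \Pi F_M \ \cong \ \P_M,
\]
which is the statement of the corollary.

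There is essentially no obstacle to overcome: all the substantive work has been carried out in \autoref{thm: segment-cosegment exchange}, and the passage from foundations to universal partial fields is purely formal via the functor $\Pi$. The only subtlety worth remarking on is verifying that $M'$ is indeed representable over a partial field (so that $\P_{M'}$ is defined at all), but this follows immediately from transporting the homomorphism $F_M \to F$ along the isomorphism of foundations.
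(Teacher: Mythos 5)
Your proof is correct and matches the paper's argument essentially verbatim: both invoke \autoref{thm: segment-cosegment exchange} to get $F_{M'} \cong F_M$ and then apply the functor $\Pi$ to conclude $\P_{M'} \cong \P_M$. Your extra remark verifying that $M'$ is representable (so that $\P_{M'}$ is defined) is a small but sensible clarification that the paper leaves implicit.
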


\begin{proof}
 Let $M'$ be the segment-cosegment exchange of $M$ along $X$. Let $F_M$ and $F_{M'}$ be the foundations of $M$ and $M'$, respectively. By \autoref{thm: segment-cosegment exchange}, $F_{M'}\simeq F_M$, which implies
 \[
  \P_{M'} \ = \ \Pi F_{M'} \ \simeq \ \Pi F_M \ = \ \P_M,
 \]
 since the functor $\Pi$ preserves isomorphisms.
\end{proof}

\begin{small}
 \bibliographystyle{plain}
 \bibliography{matroid}

\begin{thebibliography}{10}

\bibitem{Akkari-Oxley91}
Safwan Akkari and James Oxley.
\newblock Some extremal connectivity results for matroids.
\newblock {\em J. Combin. Theory Ser. B}, 52(2):301--320, 1991.

\bibitem{Baker-Bowler19}
Matthew Baker and Nathan Bowler.
\newblock Matroids over partial hyperstructures.
\newblock {\em Adv. Math.}, 343:821--863, 2019.

\bibitem{Baker-Lorscheid21b}
Matthew Baker and Oliver Lorscheid.
\newblock The moduli space of matroids.
\newblock {\em Adv. Math.}, 390:Paper No. 107883, 118, 2021.

\bibitem{Baker-Lorscheid24}
Matthew Baker and Oliver Lorscheid.
\newblock On a theorem of {L}afforgue.
\newblock {\em Int. Math. Res. Not. IMRN}, 2024(15):11082--11091, 2024.

\bibitem{Baker-Lorscheid25}
Matthew Baker and Oliver Lorscheid.
\newblock Foundations of matroids, {P}art 1: {M}atroids without large uniform
  minors.
\newblock {\em Mem. Amer. Math. Soc.}, 305(1536):v+84, 2025.

\bibitem{Baker-Lorscheid25b}
Matthew Baker and Oliver Lorscheid.
\newblock Lift theorems for representations of matroids over pastures.
\newblock {\em J. Combin. Theory Ser. B}, 170:1--55, 2025.

\bibitem{Baker-Lorscheid-Zhang25}
Matthew Baker, Oliver Lorscheid, and Tianyi Zhang.
\newblock Foundations of matroids {P}art 2: {F}urther theory, examples, and
  computational methods.
\newblock {\em Comb. Theory}, 5(1):Paper No. 1, 77, 2025.

\bibitem{Brylawski75}
Tom Brylawski.
\newblock Modular constructions for combinatorial geometries.
\newblock {\em Trans. Amer. Math. Soc.}, 203:1--44, 1975.

\bibitem{Chen-Zhang}
Justin Chen and Tianyi Zhang.
\newblock Representing matroids via pasture morphisms.
\newblock Preprint, \arxiv{2307.14275}, 2023.

\bibitem{Creech21}
Steven Creech.
\newblock Limits and colimits in the category of pastures.
\newblock Preprint, \arxiv{2103.08655}, 2021.

\bibitem{HOCHSTATTLER2011841}
Winfried Hochstättler and Robert Nickel.
\newblock Joins of oriented matroids.
\newblock {\em European J. Combin.}, 32(6):841--852, 2011.
\newblock Matroids, Polynomials and Enumeration.

\bibitem{Matsumura80}
Hideyuki Matsumura.
\newblock {\em Commutative algebra}, volume~56 of {\em Mathematics Lecture Note
  Series}.
\newblock Benjamin/Cummings Publishing Co., Inc., Reading, MA, second edition,
  1980.

\bibitem{Oxley06}
James Oxley.
\newblock {\em Matroid Theory (Oxford Graduate Texts in Mathematics)}.
\newblock Oxford University Press, Inc., 2006.

\bibitem{Oxley-Semple-Vertigan}
James Oxley, Charles Semple, and Dirk Vertigan.
\newblock Generalized {$\Delta\text{-}Y$} exchange and {$k$}-regular matroids.
\newblock {\em J. Combin. Theory Ser. B}, 79(1):1--65, 2000.

\bibitem{Pendavingh-vanZwam10b}
Rudi~A. Pendavingh and Stefan H.~M. van Zwam.
\newblock Lifts of matroid representations over partial fields.
\newblock {\em J. Combin. Theory Ser. B}, 100(1):36--67, 2010.

\bibitem{Tutte58a}
William~T. Tutte.
\newblock A homotopy theorem for matroids, {I}.
\newblock {\em Trans. Amer. Math. Soc.}, 88:144--160, 1958.

\bibitem{Tutte65}
William~T. Tutte.
\newblock Lectures on matroids.
\newblock {\em J. Res. Nat. Bur. Standards Sect. B}, 69B:1--47, 1965.

\bibitem{vanZwam09}
Stefan H.~M. van Zwam.
\newblock Partial fields in matroid theory.
\newblock PhD thesis, Eindhoven, 2009. Online available at
  \url{www.math.lsu.edu/~svanzwam/pdf/thesis-online.pdf}.

\end{thebibliography}
\end{small}

\end{document}